\tikzset{mytext/.style={font=\small, text=black}}
\newtheorem{Theorem}{Theorem}
\newtheorem{Conjecture}{Conjecture}
\newtheorem{Corollary}[Theorem]{Corollary}
\newtheorem{proposition}{Proposition}[section]
\newtheorem{lemma}[proposition]{Lemma}
\newtheorem{corollary}[proposition]{Corollary}
\newtheorem{theorem}[proposition]{Theorem}
\newtheorem{Question}[Conjecture]{Question}
\newtheorem{question}[proposition]{Question}
\theoremstyle{definition}
\newtheorem{definition}[proposition]{Definition}
\newcommand{\F}{\mathbb{F}}
\DeclareMathOperator{\Aut}{Aut~}
\DeclareMathOperator{\hdim}{hdim}
\DeclareMathOperator{\hspec}{hspec}
\DeclareMathOperator{\rst}{rst}
\DeclareMathOperator{\st}{st}
\DeclareMathOperator{\St}{St}
\numberwithin{equation}{section}
\title[On finitely generated Hausdorff spectra]{
On finitely generated Hausdorff spectra of groups acting on rooted trees}
\author[J. Fariña-Asategui, O. Garaialde Ocaña and J. Uria-Albizuri]{Jorge Fariña-Asategui, Oihana Garaialde Ocaña and Jone Uria-Albizuri}
\address{Jorge Fariña-Asategui: Centre for Mathematical Sciences, Lund University, 223 62 Lund, Sweden -- Department of Mathematics, University of the Basque Country UPV/EHU, 48080 Bilbao, Spain}
\email{jorge.farina\_asategui@math.lu.se}
\address{Oihana Garaialde Ocaña: Department of Mathematics, University of the Basque Country UPV/EHU, 48080 Bilbao, Spain}
\email{oihana.garayalde@ehu.eus}
\address{Jone Uria-Albizuri: Department of Mathematics, University of the Basque Country UPV/EHU, 48080 Bilbao, Spain}
\email{jone.uria@ehu.eus}
\keywords{Hausdorff spectra, weakly branch groups, iterated wreath products, profinite groups}
\subjclass[2020]{Primary: 20E08, 20E18 ; Secondary: 28A78}
\thanks{All the authors are supported by the Spanish Government, grant PID2020-117281GB-I00, partly with FEDER funds. The first author is also supported from the Walter Gyllenberg Foundation from the Royal Physiographic Society of Lund. The second and third authors are supported partly from European
Regional Development Fund (ERDF), the Basque Government, grant
IT1483-22 and Ministry of Science, Innovation and Universities of Spain, grant  PCI2024-155096-2.}
\begin{document}

\begin{abstract}

We answer two longstanding questions of Klopsch (1999) and Shalev (2000) by proving that the (topologically)
finitely generated Hausdorff spectrum of the closure of a finitely generated regular branch group with respect to the level-stabilizer filtration is the full interval [0,1].

Previous work related to the above questions include the celebrated result of Abért and Virág on the completeness of the finitely generated Hausdorff spectrum of the group of $p$-adic automorphisms $W_p$. We extend their result to any level-transitive iterated wreath product acting on a regular rooted tree, and more importantly, provide
the first explicit examples of finitely generated subgroups with prescribed Hausdorff dimension. In fact, in $W_p$ such subgroups can be taken to be 2-generated,  giving further evidence to a conjecture of Abért and Virág.

Furthermore, we extend a well-known result of Klopsch (1999) for branch groups to closed level-transitive groups with fully dimensional rigid stabilizers: these have full Hausdorff spectrum with respect to the level-stabilizer filtration. As a consequence, we determine, for the first time, the Hausdorff spectum of many well-studied families of weakly branch groups.

As an additional new contribution, we define {\it nice filtrations} (which include the level-stabilizer filtration). In fact, the first and the last result can be proven more generally with respect to these filtrations (under possibly some further assumptions).

Finally, we also consider two standard filtrations, namely the $2$-central lower series, and the dimension subgroup or Jenning-Zassenhaus series in the closure of the first Grigorchuck group, and show that its finitely generated Hausdorff spectra with respect to these filtrations is the full interval [0,1], answering a recent question of de las Heras and Thillaisundaram (2022). This is obtained via a reduction to properties concerning to nice filtrations.

\end{abstract}

\maketitle

\section{introduction}
\label{section: introduction}

Countably based profinite groups are totally disconnected, Hausdorff and compact topological groups and so, metrizable. This, in turn, allows the definition of Hausdorff dimension of their measurable subsets. Since the pioneering work of Abercrombie in \cite{Abercrombie} and Barnea and Shalev in \cite{BarneaShalev}, Hausdorff spectra of countably based profinite groups have provided fruitful results; see \cite{Jorge, PadicAnalytic} and the references therein.

For a countably based profinite group $G$,  let $\mathcal G:=\{G_i\}_{i\geq 0}$ be a filtration series of $G$, i.e. a descending chain of open normal subgroups 
\[
G=G_0 \supseteq G_1\supseteq G_2 \supseteq \dots \supseteq \{1\}
\]
with trivial intersection. This filtration induces a translation invariant metric on~$G$, where for every $x,y\in G$, the distance between $x$ and $y$ is given by
\[
d^{\mathcal{G}}(x,y):=\inf_{i\ge 0} \big\{|G:G_i|^{-1} \mid  xy^{-1}\in G_i\big\}.
\]
Then, by \cite[Theorem 2.4]{BarneaShalev}, the {\it Hausdorff dimension of a closed subgroup $H\leq G$ with respect to $\mathcal G$} is given by the following limit inferior
\[
\hdim_G^{\mathcal G}(H)=\liminf_{i\to \infty} \frac{\log|HG_i:G_i|}{\log|G:G_i|}=\liminf_{i\to \infty} \frac{\log|H:H\cap G_i|}{\log|G:G_i|}.
\]
The {\it Hausdorff spectrum of $G$ with respect to $\mathcal G$} is the set of Hausdorff dimensions of the closed subgroups of $G$ with respect to $\mathcal{G}$, i.e.
\[
\{0,1\}\subseteq \hspec^{\mathcal G}(G)=\{\hdim_G^{\mathcal G}(H)\mid H\leq G \text{ closed}\}\subseteq [0,1].
\]

A key motivation to study the Hausdorff spectra of profinite groups stems from a conjecture of Barnea and Shalev in \cite{BarneaShalev}, which states that the class of $p$-adic analytic pro-$p$ groups can be characterized by the finiteness of their Hausdorff spectrum. Although this conjecture remains open in its full generality, it was proven in \cite{PadicAnalytic} for the case of solvable $p$-adic analytic pro-$p$ groups. Soon, the study of the Hausdorff spectra of other classes of profinite groups was started, including the Hausdorff spectra of branch profinite groups.

The class of branch groups was introduced by Grigorchuk in 1997 at the St. Andrew’s group theory conference in Bath as a common generalization of the first Grigorchuk group \cite{GrigorchukBurnside} and the Gupta-Sidki $p$-groups \cite{GuptaSidki} among others; see \cref{section: Preliminaries} for precise definitions here and elsewhere in the introduction. The class of branch groups  includes the first examples of groups with intermediate growth and amenable but not elementary amenable groups. One of the first results in  Hausdorff spectra of branch groups was obtained by Klopsch in his PhD thesis in 1999:

\begin{theorem}[{see {\cite[Theorem 1.6]{KlopschPhD}}}]
\label{theorem: Benjamin spectra}
    The Hausdorff spectrum of a profinite branch group with respect to the level-stabilizer filtration $\mathcal{S}=\{\mathrm{St}(n)\}_{n\ge 1}$ is the full interval $[0,1]$.
\end{theorem}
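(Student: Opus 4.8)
The plan is to realize every value $\alpha\in[0,1]$ as $\hdim_G^{\mathcal{S}}(H)$ for a suitable closed subgroup $H$, using rigid vertex stabilizers as building blocks. Write $L_n$ for the number of vertices at level $n$ and, for a vertex $v$, set $K_v=\rist_G(v)$. The two extreme cases are immediate: $H=\{1\}$ gives $0$, and $H=\Rist_G(1)=\prod_{|v|=1}K_v$, which has finite index in $G$ by the branch hypothesis, gives $1$. For $\alpha\in(0,1)$ the idea is to attach to $\alpha$ an \emph{antichain} $A$ of vertices (a set of pairwise incomparable vertices) of boundary measure $\mu(A):=\sum_{v\in A}L_{|v|}^{-1}=\alpha$, and to set $H=\overline{\prod_{v\in A}K_v}$. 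Since the subtrees hanging from distinct vertices of an antichain are disjoint, the groups $K_v$ have pairwise disjoint support, commute, and generate their closed direct product, so $H$ is a well-defined closed subgroup.

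The heart of the matter is the computation of $\hdim_G^{\mathcal{S}}(H)$ via the Barnea--Shalev formula recalled above. First I would record, for a vertex $v$, that $K_v\le\St(m)$ whenever $m\le|v|$, while for $m>|v|$ the group $K_v$ acts only inside the subtree rooted at $v$; hence $\log|H:H\cap\St(m)|=\sum_{v\in A,\,|v|<m}\beta_{|v|,m}$, where $\beta_{n,m}:=\log|K_v:K_v\cap\St(m)|$ depends only on $n=|v|$ by level-transitivity. Summing over a full level, and using that $\Rist_G(n)=\prod_{|v|=n}K_v$ has finite index $c_n$ in $\St(n)$, gives the sandwich
\[
\log|\St(n):\St(m)|-\log c_n\;\le\; L_n\beta_{n,m}\;\le\;\log|\St(n):\St(m)|\;\le\;\log|G:\St(m)|.
\]
The rightmost inequality yields the uniform bound $\beta_{n,m}/\log|G:\St(m)|\le L_n^{-1}$, so the density ratio is at most $\sum_{v\in A}L_{|v|}^{-1}=\alpha$ for every $m$, whence $\hdim_G^{\mathcal{S}}(H)\le\alpha$. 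The leftmost inequality, together with $\log|G:\St(m)|\to\infty$ and $\log|G:\St(m)|=\log|G:\St(n)|+\log|\St(n):\St(m)|$, gives $\beta_{n,m}/\log|G:\St(m)|\to L_n^{-1}$ as $m\to\infty$ for each fixed $n$. Consequently, given $\varepsilon>0$, I would pick a level $N$ with $\sum_{v\in A,\,|v|\le N}L_{|v|}^{-1}>\alpha-\varepsilon$; the finitely many terms with $|v|\le N$ then force the ratio above $\alpha-\varepsilon$ for all large $m$, so $\liminf_m\ge\alpha$ and therefore $\hdim_G^{\mathcal{S}}(H)=\alpha$.

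It remains to produce, for each $\alpha\in(0,1)$, an antichain with $\mu(A)=\alpha$. As the tree branches (with $m_n\ge 2$ at each level), this is a greedy mixed-radix construction: include $\lfloor\alpha L_1\rfloor$ of the level-$1$ subtrees, pass to one further child to absorb the remainder $\alpha-\lfloor\alpha L_1\rfloor L_1^{-1}\in[0,L_1^{-1})$, and iterate; the resulting finite or infinite antichain has measure exactly $\alpha$, and the density computation above applies verbatim in both cases. I expect the main technical obstacle to be the bookkeeping in the index computation rather than the combinatorics of antichains: specifically, controlling the contribution of the vertices of $A$ whose level is close to $m$, and checking that the per-level finite-index factors $c_n$ are absorbed in the limit. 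A secondary point worth isolating as a lemma is the disjoint-support claim that $\overline{\prod_{v\in A}K_v}\cap\St(m)=\prod_{v\in A}\bigl(K_v\cap\St(m)\bigr)$, which is what legitimizes the telescoping expression for $\log|H:H\cap\St(m)|$.
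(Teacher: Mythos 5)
Your proposal is correct, and the underlying construction is the same as the paper's: realize $\alpha$ as the measure $\mu(A)$ of an antichain of vertices and take the closed product of the rigid vertex stabilizers over it (this is exactly \cref{lemma: existence of V}, \cref{proposition: dimension is a series} and \cref{corollary: full and normal spectra}, specialized to branch groups and the filtration $\mathcal{S}$). What genuinely differs is the verification that $\hdim_G^{\mathcal{S}}(H)=\mu(A)$. The paper first proves that each $\rist_G(v)$ has \emph{strong} Hausdorff dimension $1/N_{d(v)}$ (\cref{lemma: dimension of product}, using niceness of the filtration and $1$-dimensionality of $\Rist_G(n)$), treats finite antichains by induction, and handles infinite antichains with countable stability of Hausdorff dimension (for the lower bound) plus a filling-in trick, enlarging $H_{V_n}$ by whole rigid stabilizers at level $n$ covering the tail of $V$ (for the upper bound). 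You instead derive the sandwich $\log|\St_G(n):\St_G(m)|-\log c_n\le L_n\beta_{n,m}\le \log|G:\St_G(m)|$ directly from the finite index $c_n=|G:\Rist_G(n)|$; the resulting \emph{uniform} per-vertex bound $\beta_{n,m}\le L_n^{-1}\log|G:\St_G(m)|$ disposes of infinite antichains in one stroke, and your truncated lower bound finishes the argument (and even yields strong dimension). Your route is more elementary and self-contained, but it is wedded to the branch hypothesis and to $\mathcal{S}$ --- precisely Klopsch's setting --- whereas the paper's abstraction (nice filtrations, fully dimensional rigid stabilizers, subgroups $H_v\le\rist_G(v)$ with weights $\omega_v$) is what allows the generalization to \cref{Theorem: full spectrum} and its reuse in the finitely generated spectrum results, where the factors are proper subgroups of the rigid stabilizers rather than the rigid stabilizers themselves. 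The one step you should still write out is the disjoint-support identity you flagged; it is quick: only finitely many $v\in A$ satisfy $|v|<m$, every $K_v$ with $|v|\ge m$ lies in $\St(m)$, and the images modulo $\St_G(m)$ of the remaining finitely many $K_v$ have disjoint supports, hence commute and intersect trivially, so they generate their direct product in the congruence quotient.
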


We noticed that the proof of \cref{theorem: Benjamin spectra} is based on a technical lemma \cite[Lemma 5.3]{KlopschPhD} that is related to rigid stabilizers, and we pin down the key conditions required in its proof. This, in turn, motivates the definition of {\it nice filtrations} (see \cref{section: full and normal hausdorff spectrum}), which, as remarkable examples in a branch group $G\le \mathrm{Aut}~T$, include the level stabilizer filtration $\mathcal{S}$ and the rigid level stabilizer filtration $\{\mathrm{Rist}_G(n)\}_{n\ge 1}$. We obtain the following result:

 \begin{Theorem}
 \label{Theorem: full spectrum}
        Let $G\le \mathrm{Aut}~T$ be a closed and level-transitive subgroup such that its rigid stabilizers are fully dimensional with respect to a nice filtration $\mathcal{N}$. Then 
        $$\mathrm{hspec}^{\mathcal{N}}(G)=[0,1].$$
\end{Theorem}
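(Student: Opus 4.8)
The plan is to realize every $\alpha\in[0,1]$ as $\hdim_G^{\mathcal{N}}(H)$ for an explicitly constructed closed subgroup $H$. Since $0,1\in\hspec^{\mathcal{N}}(G)$ always, fix $\alpha\in(0,1)$. The building blocks will be the rigid vertex stabilizers $\rist_G(v)$: for an antichain $A$ of $T$ (a set of pairwise incomparable vertices, so that the subtrees $T_v$, $v\in A$, are pairwise disjoint) I will take
$$H=H_A:=\overline{\prod_{v\in A}\rist_G(v)},$$
the closure of the internal direct product (the factors commute because they are supported on disjoint subtrees). I would first record that a vertex $v$ of level $n$ carries boundary measure $\mu(v)=1/\ell_n$, where $\ell_n$ is the number of level-$n$ vertices (level-transitivity forces $T$ to be spherically homogeneous, so $\ell_n$ is well defined), and that for any target $\alpha$ there is an antichain $A$ with $\sum_{v\in A}\mu(v)=\alpha$ (greedily peel off cylinders of total Haar measure $\alpha$ in $\partial T$; in the $d$-regular case this is just a base-$d$ expansion of $\alpha$). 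The whole proof then reduces to showing $\hdim_G^{\mathcal{N}}(H_A)=\sum_{v\in A}\mu(v)$.

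For the dimension computation I would set $\rho_i:=\log|H_A N_i:N_i|/\log|G:N_i|$ and analyse it through two inputs. The first is an additivity property that I expect to be exactly what niceness of $\mathcal{N}$ provides: that $N_i$ is compatible with the rigid decomposition, i.e. $\big(\prod_{v\in A'}\rist_G(v)\big)\cap N_i=\prod_{v\in A'}\big(\rist_G(v)\cap N_i\big)$ for every finite $A'\subseteq A$ (for the level-stabilizer filtration this is immediate, since an automorphism supported on disjoint subtrees fixes level $i$ if and only if each of its components does). Since $N_i$ is open, only finitely many factors matter modulo $N_i$, so this yields
$$\log|H_A N_i:N_i|=\sum_{v\in A}\log|\rist_G(v):\rist_G(v)\cap N_i|.$$
Level-transitivity makes all $\ell_n$ factors at a fixed level $n$ conjugate, hence (as $N_i\trianglelefteq G$) of equal index, so each level-$n$ summand equals $\mu(v)\log|\Rist_G(n)N_i:N_i|$. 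Because $\Rist_G(n)N_i\le G$, each summand is at most $\mu(v)\log|G:N_i|$, giving the clean upper bound $\rho_i\le\sum_{v\in A}\mu(v)=\alpha$ for every $i$.

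The matching lower bound is where full-dimensionality enters: by hypothesis $\log|\Rist_G(n)N_i:N_i|/\log|G:N_i|\to 1$ as $i\to\infty$ for each fixed $n$. Truncating $A$ at level $m$ and keeping only those finitely many vertices, their contribution to $\rho_i$ tends to $\sum_{v\in A,\ |v|\le m}\mu(v)$ as $i\to\infty$; letting $m\to\infty$ gives $\liminf_i\rho_i\ge\sum_{v\in A}\mu(v)=\alpha$. Combined with the upper bound this forces $\rho_i\to\alpha$, so $\hdim_G^{\mathcal{N}}(H_A)=\alpha$, as desired.

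The main obstacle, and the step I would spend the most care on, is the additivity input: verifying that the abstract axioms of a nice filtration really do force $\big(\prod_{v\in A'}\rist_G(v)\big)\cap N_i=\prod_{v\in A'}(\rist_G(v)\cap N_i)$, or at least a cofinal version of it sufficient for the liminf. This is precisely the content that Klopsch's Lemma~5.3 isolates for the level-stabilizer filtration, and the role of the niceness hypotheses is to make the same bookkeeping go through for the general filtration $\mathcal{N}$; the only genuinely analytic point beyond it is the interchange of the $i\to\infty$ and $m\to\infty$ limits, which is harmless because the per-vertex indices are monotone in $i$ and uniformly bounded by $\mu(v)\log|G:N_i|$.
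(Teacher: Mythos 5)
Your construction is exactly the paper's: the subgroups $H_A=\overline{\prod_{v\in A}\mathrm{rist}_G(v)}$ over an antichain $A$ with $\mu(A)=\alpha$, together with the observation that such antichains realize every $\alpha\in[0,1]$. The gap is the step you yourself flag as the main obstacle, and it is genuine: the definition of a nice filtration (\cref{definition: nice filtration}) does \emph{not} supply the additivity you need, nor any ``cofinal version'' of it. Niceness is a single-level, asymptotic condition: for each fixed level $k$ the splitting of $\mathrm{Rist}_G(k)G_i/G_i$ into the factors $\mathrm{rist}_G(v)G_i/G_i$, $v\in\mathcal{L}_k$, holds only for $i$ beyond some threshold $i_0(k)$ depending on $k$. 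Your antichain meets infinitely many levels, so at any fixed $i$ --- indeed along any cofinal set of $i$'s --- there are vertices $v\in A$ deep enough that no splitting at level $d(v)$ is available, hence no per-vertex bound $\log|\mathrm{rist}_G(v)G_i:G_i|\le\mu(v)\log|G:G_i|$ either. Consequently the identity $\log|H_AG_i:G_i|=\sum_{v\in A}\log|\mathrm{rist}_G(v)G_i:G_i|$ and the resulting uniform bound $\rho_i\le\alpha$ for every $i$ are unjustified; only the subadditivity ``$\le$'' comes for free (the factors commute elementwise, so the image of $H_A$ mod $G_i$ is a quotient of the direct product of the images), and its right-hand side is then uncontrolled. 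The same problem already infects your \emph{finite} case when the vertices lie at different levels, since \cref{align: nice filtration} only concerns non-comparable subsets of a single level $\mathcal{L}_k$; so the lower bound via truncation also rests on an unproven statement. (Note also that your conclusion $\rho_i\to\alpha$ would give strong Hausdorff dimension, which is more than the theorem claims --- a sign the argument is overreaching.)

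The paper fills this gap with two devices that never invoke additivity across levels, or at infinitely many levels simultaneously. First, a replacement argument (the finite case of \cref{proposition: dimension is a series}): full dimensionality of the $\mathrm{Rist}_G(n)$ gives, via \cref{lemma: dimension of product}, that each $\mathrm{rist}_G(v)$ has \emph{strong} dimension $1/N_{d(v)}$, and combining strongness with single-level niceness one shows that $\mathrm{rist}_G(v)$ and $\prod_{u}\mathrm{rist}_G(u)$ (product over the descendants $u$ of $v$ at a common deep level $n$) have the same Hausdorff dimension; this pushes any finite antichain down to one level, where \cref{align: nice filtration} applies. Second, for infinite $A$, a two-sided monotonicity sandwich: from below, the finite truncations $H_{A_n}\le H_A$; from above, the finite products $K_n=H_{A_n}\prod_{v\in W_n\setminus A_n}\mathrm{rist}_G(v)$, where $W_n\setminus A_n$ is the set of level-$n$ vertices lying above the tail of $A$, so that $H_A\le K_n$ and $\mathrm{hdim}_G^{\mathcal{N}}(H_A)\le \mu(A_n)+\mu(W_n\setminus A_n)\to\mu(A)$. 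This ``complete the tail to full rigid stabilizers at a single level, then use monotonicity of Hausdorff dimension'' step is the idea missing from your proposal; with it, only finite, single-level additivity is ever needed, which is precisely what niceness provides.
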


Note that this result generalizes \cref{theorem: Benjamin spectra} in both ways: it includes a wider class of groups, and it considers more general filtrations. Even more, we also determine the normal Hausdorff spectrum of some of such groups (see \cref{theorem: full spectra}), and identify main examples of branch and of weakly branch groups in the literature that satisfy the required assumptions in it.

For families of groups with a complete Hausdorff spectra, one is interested in studying the Hausdorff spectrum restricted to a subclass of subgroups. For a property $\mathcal{P}$ the $\mathcal{P}$ {\it Hausdorff spectrum of $G$ with respect to $\mathcal G$} is the set of Hausdorff dimensions of the closed subgroups of $G$ with respect to $\mathcal{G}$ satisfying the additional property $\mathcal{P}$.

Restrictions of the Hausdorff spectrum of $\mathrm{Aut}~T_p$ and its Sylow pro-$p$ subgroup~$W_p$ to self-similar subgroups, branch subgroups, etc were already considered on the early 2000s by Bartholdi, Grigorchuk and \v{S}uni\'{c} among others; compare \cite{BartholdiHausdorff, GrigorchukFinite, SunicHausdorff}. However, it was not until very recently that these restricted spectra were completely established for the group $W_p$ by the first author in \cite{Jorge}. 

Another natural restriction of the Hausdorff spectrum is to topologically finitely generated closed subgroups. Already in 1999, Klopsch asked in \cite{KlopschPhD} about the finitely generated Hausdorff spectrum of branch groups:

\begin{Question}[{see {\cite[Question 5.4]{KlopschPhD}}}]
\label{question: Benjamin}
Does there exist a topologically finitely generated branch profinite group whose finitely generated Hausdorff spectrum is uncountable?
\end{Question}

In the context of pro-$p$ groups, Shalev also asked about the finitely generated Hausdorff spectrum:

\begin{Question}[{see {\cite[Problem 17]{ShalevNewHorizons}}}]
\label{question: fgShalev}
Are there topologically finitely generated pro-p groups with uncountable finitely generated spectrum? Can the finitely generated spectrum of such a group contain irrational numbers? Intervals?
\end{Question}

Computing the finitely generated Hausdorff spectrum of a profinite group has proven to be in general a difficult problem, which can be observed in the fact that there are not many examples in the literature where the finitely generated Hausdorff spectrum of a group has been computed; compare \cite{BarneaShalev, IkerAnitha, OihanaAlejandraBenjamin}. Indeed, the only branch profinite group whose finitely generated Hausdorff spectrum is known is the aforementioned group $W_p$, thanks to the remarkable work of Abért and Virág in \cite{AbertVirag}. They showed that the 3-bounded Hausdorff spectrum of $W_p$ is the whole interval $[0,1]$. Furthermore, they showed the existence of topologically finitely generated subgroups of $W_p$ whose finitely generated Hausdorff spectrum contains any prescribed real number $\alpha\in [0,1]$, giving an answer to the second part of \cref{question: fgShalev}.

However, since $W_p$ is not topologically finitely generated (see \cite[Corollary~3.6]{Bondarenko}), their methods do not provide an answer neither to \cref{question: Benjamin} nor to the first and the last part of \cref{question: fgShalev}. Moreover, their probabilistic methods are not constructive and do not yield any explicit example of a finitely generated subgroup of $W_p$ with prescribed Hausdorff dimension. 
Nonetheless, some years later, Siegentaler constructed an explicit example for the 1-dimensional case in \cite{SiegenthalerBinary}.

Our second goal in this article is to finally answer both longstanding \textcolor{teal}{Questions}~\ref{question: Benjamin} and \ref{question: fgShalev} in a very strong sense. In fact, we prove that the closure of any finitely generated regular branch group in $\mathrm{Aut}~T_m$ has complete bounded Hausdorff spectrum:

\begin{Theorem}
\label{theorem: finitely generated spectrum of regular branch}
    Let $m\ge 2$ be a natural number and let $T_m$ be the $m$-adic tree. Let $G\le \mathrm{Aut}~T_m$ be a finitely generated regular branch group. Then there exists a natural number $b\ge 2$ such that
    $$\mathrm{hspec}_{\mathrm{b}}^\mathcal{S}(\overline{G})=\mathrm{hspec}_{\mathrm{fg}}^\mathcal{S}(\overline{G})=\mathrm{hspec}^\mathcal{S}(\overline{G})=[0,1].$$
Furthermore, if $G$ is also just infinite then for any nice filtration $\mathcal{N}$ we get
    $$\mathrm{hspec}_{\mathrm{b}}^\mathcal{N}(\overline{G})=\mathrm{hspec}_{\mathrm{fg}}^\mathcal{N}(\overline{G})=\mathrm{hspec}^\mathcal{N}(\overline{G})=[0,1].$$
\end{Theorem}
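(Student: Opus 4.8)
The plan is to establish the single inclusion $[0,1]\subseteq \mathrm{hspec}_{\mathrm b}^{\mathcal S}(\overline G)$; since one always has the chain $\mathrm{hspec}_{\mathrm b}^{\mathcal S}(\overline G)\subseteq \mathrm{hspec}_{\mathrm{fg}}^{\mathcal S}(\overline G)\subseteq \mathrm{hspec}^{\mathcal S}(\overline G)\subseteq[0,1]$, this single inclusion forces all three spectra to coincide with $[0,1]$. Because $G$ is regular branch, say over the open subgroup $K$ with branching level $\ell$, its closure $\overline G$ is regular branch over the open subgroup $\overline K$, so $\psi_\ell(\overline K)\ge \overline K\times\cdots\times \overline K$ on the $m^\ell$ subtrees hanging from level $\ell$. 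As $\overline K$ has finite index in $\overline G$, the denominators $\log|\overline G:\St(n)|$ and $\log|\overline K:\overline K\cap \St(n)|$ differ by a bounded amount, so Hausdorff dimensions relative to $\overline G$ and to $\overline K$ agree; I would therefore carry out the whole construction inside $\overline K$, whose self-replicating structure provides exactly the room needed.

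The heart of the argument is, for each prescribed $\alpha\in[0,1]$, the explicit construction of a closed subgroup $H_\alpha\le \overline K$ generated by at most $b$ elements, with $b$ independent of $\alpha$, and satisfying $\hdim^{\mathcal S}_{\overline G}(H_\alpha)=\alpha$. The idea is to replace the probabilistic existence argument of Abért and Virág by a deterministic one: I would encode a target \emph{density pattern} along an infinite ray of $T_m$ directly into the self-similar states of boundedly many generators, using the embedding $\psi_\ell(\overline K)\ge \overline K\times\cdots\times\overline K$ to place, at each block of $\ell$ levels, either a full copy of $\overline K$ (contributing the maximal rate) or a recursively sparser piece, according to the pattern. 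Writing $r_n:=\log|\overline K:\overline K\cap\St(n)|$, the self-similar recursion yields a formula for $\log|H_\alpha:H_\alpha\cap\St(n)|$ in terms of the pattern, and choosing long alternating stretches of dense and sparse blocks makes the ratio $\log|H_\alpha:H_\alpha\cap\St(n)|/r_n$ oscillate so that its limit inferior is exactly $\alpha$. The crucial point is that although $H_\alpha$ has an intricate level structure, the entire pattern is absorbed into the automaton defining the generators, so that $H_\alpha$ remains $b$-generated for a uniform $b$; this is precisely what distinguishes the bounded spectrum from the merely finitely generated one.

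I expect the main obstacle to be exactly this simultaneous control of two competing demands: on the one hand the density pattern must be rich enough to reach an \emph{arbitrary} real $\alpha$ and to pin down the limit inferior precisely, rather than only up to $\pm\varepsilon$, while on the other hand the number of generators must stay uniformly bounded. Making the recursion for $\log|H_\alpha:H_\alpha\cap\St(n)|$ exact rather than merely asymptotic requires careful bookkeeping of the finite-index defect $|\overline G:\overline K|$ and of the contribution of the rooted portion of each generator, and verifying that the closed subgroup so generated really does have the claimed intersections with the level stabilizers is the technical core of the proof.

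Finally, for the ``furthermore'' clause I would transfer the result from $\mathcal S$ to an arbitrary nice filtration $\mathcal N$ under the just infinite hypothesis. When $G$ is just infinite its rigid stabilizers are fully dimensional with respect to every nice filtration, so $\overline G$ falls under the hypotheses of \cref{Theorem: full spectrum}; just as importantly, just infiniteness forces any nice $\mathcal N$ to be sandwiched between level stabilizers with controlled index growth, and I would use this to prove a comparison result showing that the bounded subgroups $H_\alpha$ constructed above retain Hausdorff dimension $\alpha$ with respect to $\mathcal N$. Since they remain generated by at most $b$ elements, this yields $\mathrm{hspec}_{\mathrm b}^{\mathcal N}(\overline G)=\mathrm{hspec}_{\mathrm{fg}}^{\mathcal N}(\overline G)=\mathrm{hspec}^{\mathcal N}(\overline G)=[0,1]$ exactly as in the level-stabilizer case.
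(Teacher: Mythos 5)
Your skeleton agrees with the paper's (reduce everything to the single inclusion $[0,1]\subseteq\mathrm{hspec}_{\mathrm b}^{\mathcal S}(\overline G)$, pass to a finite-index branching subgroup, build $b$-generated subgroups of each dimension), but the heart of your argument is exactly the step you leave as a black box, and the mechanism you sketch for it is not the one that works. You propose to realize $\alpha$ \emph{temporally}, by encoding a dense/sparse block pattern along levels into ``the automaton defining the generators'' so that the ratio $\log|H_\alpha:H_\alpha\cap\St(n)|/\log|\overline G:\St(n)|$ oscillates with limit inferior $\alpha$; but you give no construction of such an automaton, no proof that the resulting closed subgroup has the claimed congruence indices, and no reason why bounded generation survives. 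That is precisely the difficulty that Ab\'ert and Vir\'ag could only handle probabilistically, and it is the entire content of the paper's key lemma. The paper instead realizes $\alpha$ \emph{spatially}: choose a non-comparable set $X=\{x_n\}\subseteq T_m$ with $\mu(X)=\sum_n 1/N_{d(x_n)}=\alpha$ (\cref{lemma: existence of V}), replace $K$ by its normal core (\cref{lemma: normal core}), and explicitly construct (\cref{lemma: construction of LI}, using an element of infinite order from \cref{lemma: branching has element of infinite order} and the transitivity level from \cref{lemma: transitive action of branching subgroups}) a subgroup $L_X\le\overline K$, with $d(L_X)$ independent of $X$, squeezed as $\prod_{n}\overline{K'}*x_nY_n\le\overline{L_X'}\le\overline L_X\le\overline K*X$. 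Since $K$ is finitely generated, $\overline{K'}$ is fully dimensional in $\overline G$, so \cref{proposition: dimension is a series} pins the dimension of $\overline L_X$ to exactly $\mu(X)$ as a genuine limit --- no oscillation, no ``exact versus $\pm\varepsilon$'' issue ever arises. Without an analogue of \cref{lemma: construction of LI}, your proof does not get off the ground.

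The ``furthermore'' clause also contains a wrong step. You assert that just infiniteness forces any nice filtration $\mathcal N$ to be ``sandwiched between level stabilizers with controlled index growth'' and that this yields a comparison transferring the dimension of each $H_\alpha$ from $\mathcal S$ to $\mathcal N$. Every filtration defining the congruence topology is interleaved with the $\St(n)$, but that gives no quantitative comparison of logarithmic indices, and Hausdorff dimensions with respect to different filtrations of the same group genuinely differ (this is why \cref{subsection: grigorchuk group} must treat the non-nice filtrations $\mathcal L$ and $\mathcal D$ of the Grigorchuk closure separately). The correct argument is shorter and different: since $G$ is just infinite and, by \cref{lemma: normal subgroups contain derived of rist}, not virtually abelian, the non-trivial normal subgroup $K'$ has \emph{finite index} in $G$, so $\overline{K'}$ is open in $\overline G$ by \cref{lemma: indices grow in closures} and hence $1$-dimensional with respect to \emph{every} filtration; one then reruns \cref{proposition: dimension is a series}, which is stated for arbitrary nice filtrations, on the same subgroups $L_X$. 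No comparison between $\mathcal S$ and $\mathcal N$ is needed, and none is available.
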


Observe that both \cite[Theorem 2]{AbertVirag} and \cref{theorem: finitely generated spectrum of regular branch} provide examples of groups for which both their bounded and their finitely generated Hausdorff spectra are complete. In view of this, we ask the following:

 \begin{Question}
    Does there exist a finitely generated profinite or pro-$p$ group whose finitely generated Hausdorff spectrum is complete but whose bounded Hausdorff spectrum is not?
\end{Question}

We point out that the proof of \cref{theorem: finitely generated spectrum of regular branch} does not make full use of the finite generation and the branchness assumptions on the group $G$. Both hypothesis are only used to ensure the existence of a finitely generated branching normal subgroup of the group $G$ with fully dimensional closure in $\overline{G}$. Here, fully dimensional means that the Hausdorff dimension in $\overline{G}$ is one. Thus, for a weakly regular branch group it suffices to find a branching subgroup satisfying the previous conditions in order to determine its bounded Hausdorff spectrum. We obtain the following corollary:

\begin{Corollary}
\label{corollary: weakly branch fg}
    Let $G\le \mathrm{Aut}~T_m$ be a weakly regular branch group branching over a finitely generated normal subgroup with fully-dimensional closure in $\overline{G}$. Then $\overline{G}$ has complete bounded Hausdorff spectrum with respect to $\mathcal{S}$.
\end{Corollary}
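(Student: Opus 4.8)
The plan is to deduce the corollary directly from the \emph{proof} of \cref{theorem: finitely generated spectrum of regular branch} rather than from its statement, exactly as anticipated in the discussion preceding it. The first step is to isolate the precise structural input that the argument consumes. Inspecting that proof, the finite generation and branchness of $G$ enter only through the existence of a nontrivial finitely generated normal subgroup $K \trianglelefteq G$ with the branching containment $K \times \cdots \times K \le \psi(K)$ (the product ranging over the $m$ first-level subtrees) whose closure satisfies $\hdim_{\overline{G}}^{\mathcal{S}}(\overline{K}) = 1$, together with level-transitivity of $G$. Under the hypotheses of the corollary all of this is handed to us: $G$ is weakly regular branch over a finitely generated normal subgroup $K$, which supplies both the branching containment and, by assumption, the fully-dimensional closure $\overline{K}$; and level-transitivity is part of the weakly branch structure. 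Thus the work reduces to checking that no further property of $G$ is used downstream.

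For a prescribed value $\alpha \in [0,1]$ I would then rerun the construction verbatim: choosing a sequence of levels and a prescribed asymptotic density of coordinates at which to embed copies of a fixed finite generating set of $\overline{K}$, one assembles a closed subgroup $H \le \overline{G}$ whose index ratios $\log|H : H \cap \St(n)| / \log|\overline{G} : \St(n)|$ have liminf equal to $\alpha$. Full dimensionality of $\overline{K}$ is exactly what guarantees that each embedded copy contributes the full logarithmic index of its coordinate block, so that the attainable densities sweep out all of $[0,1]$; and since $K$ is finitely generated, the resulting $H$ is topologically generated by a number of elements bounded by some fixed $b$ independent of $\alpha$, placing $\hdim_{\overline{G}}^{\mathcal{S}}(H) = \alpha$ in the bounded spectrum.

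The main point to verify—and the only place where one must be sure that branchness and finite generation of the ambient group were not used covertly—is the pair of index estimates underlying $\hdim_{\overline{G}}^{\mathcal{S}}(H) = \alpha$. The lower bound requires that the copies of $\overline{K}$ placed along distinct coordinates sit independently, as a direct product, inside $\overline{G}$, which follows solely from the normal branching containment $K \times \cdots \times K \le \psi(K)$ applied iteratively and from level-transitivity; the matching upper bound requires controlling the growth of $H$ between the chosen levels, which again appeals only to these two facts. Since neither estimate invokes the finite index of $K$ in $G$—the feature distinguishing the regular branch case—nor the finite generation of $G$ beyond that of $K$, the argument transfers without change, and the uniform generator bound $b$ is inherited from the finite generating set of $K$. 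This yields $\mathrm{hspec}_{\mathrm{b}}^{\mathcal{S}}(\overline{G}) = [0,1]$, as claimed.
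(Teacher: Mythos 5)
Your overall strategy coincides with the paper's own: \cref{corollary: weakly branch fg} is obtained there precisely by rerunning the proof of \cref{theorem: finitely generated spectrum of regular branch} and observing which properties of the pair $(G,K)$ that proof actually consumes. However, your audit of that proof misses the one step that does \emph{not} transfer verbatim, and your justification of the key lower estimate is wrong at exactly that point.

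The finitely generated group $L_X$ of \cref{lemma: construction of LI} is controlled by the sandwich $\prod_{n\ge 1}\overline{K'}*x_nY_n\le \overline{L_X'}\le \overline{L}_X\le \overline{K}*X$. The upper bound $\mathrm{hdim}_{\overline{G}}^{\mathcal{S}}(\overline{L}_X)\le \mu(X)$ does follow from full dimensionality of $\overline{K}$ (together with self-similarity, which in the weakly branch setting is also what makes the rigid stabilizers of $\overline{G}$ fully dimensional, as \cref{proposition: dimension is a series} requires --- another hypothesis that is free in the regular branch case, where rigid stabilizers have finite index, but not here). The matching lower bound, however, is not a matter of ``copies of $\overline{K}$ sitting independently as a direct product'': $L_X$ is generated by diagonal products of the generators of $K$ spread across the vertices of $X$, and what the commutator--conjugation argument recovers as honest direct factors inside $L_X'$ are copies of the \emph{derived} subgroup $K'$ along transversals, not copies of $K$. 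So the lower bound needs $\overline{K'}$ --- not $\overline{K}$ --- to be fully dimensional in $\overline{G}$, which is not among the corollary's hypotheses. This is exactly where the assumptions you declared unused reappear in the proof of \cref{theorem: finitely generated spectrum of regular branch}: finite generation of $K$ gives $\mathrm{hdim}_{\mathrm{Aut}~T_m}^{\mathcal{S}}(\overline{K'})=\mathrm{hdim}_{\mathrm{Aut}~T_m}^{\mathcal{S}}(\overline{K})$ by \cite[Proposition 3.1]{JorgeMikel}, and \emph{regular} branchness gives positive and strong Hausdorff dimension of $\overline{G}$ in $\mathrm{Aut}~T_m$ via \cref{thm: Jorge_strong}, which is what permits dividing in \cref{lemma: strong dimension then product} to conclude $\mathrm{hdim}_{\overline{G}}^{\mathcal{S}}(\overline{K'})=1$. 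In the weakly regular branch setting this step must be replaced rather than copied: e.g.\ apply \cref{thm: Jorge_strong} to the branching group $K$ itself to get strong dimension of $\overline{K}$ in $\mathrm{Aut}~T_m$, deduce $\mathrm{hdim}_{\overline{K}}^{\mathcal{S}}(\overline{K'})=1$ (assuming that dimension is positive), and then apply \cref{lemma: strong dimension then product} to $\overline{K'}\le\overline{K}\le\overline{G}$, using that $\overline{K}$ has strong dimension $1$ in $\overline{G}$ by hypothesis. As written, your argument establishes only $\mathrm{hdim}_{\overline{G}}^{\mathcal{S}}(\overline{L}_X)\le\mu(X)$, so there is a genuine gap.
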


The well-known \textit{Basilica group} $\mathcal{B}\le \mathrm{Aut}~T_2$, i.e. the group generated by the automorphisms 
$$\psi(a)=(1,b)\quad \text{and}\quad \psi(b)=(1,a)\sigma,$$
where $\sigma=(1\,2)\in \mathrm{Sym}(2)$; satisfies the assumptions in \cref{corollary: weakly branch fg}. Indeed $\mathcal{B}$ is weakly regular branch over its commutator subgroup $\mathcal{B}'$ \cite[Theorem 1]{Gri-Zuk:Basilica}, $\mathcal{B}$ is not branch \cite[Theorem 3.5]{pBasilica}, its commutator subgroup $\mathcal{B}'$ is $3$-generated \cite[Proposition 4.7]{Dominik}, and $\overline{\mathcal{B}'}$ is fully dimensional in $\overline{\mathcal{B}}$ \cite[Theorem 5]{AbertVirag}. Thus, the Basilica group is the first example of a weakly branch group but not branch with full bounded Hausdorff spectrum.  The following natural question arises.

\begin{Question}
\label{question: weakly regular branch generalization}
    Is the finitely generated Hausdorff spectrum of finitely generated weakly regular branch profinite groups the whole interval?
\end{Question}

Note that Abért and Virág's result on the complete finitely generated Hausdorff spectrum of $W_p$ is not covered by \cref{theorem: finitely generated spectrum of regular branch}, 
as $W_p$ is not topologically finitely generated. Therefore, the next goal of this article is to extend \cref{theorem: finitely generated spectrum of regular branch} also to level-transitive iterated permutational wreath products, generalizing the result of Abért and Virág for $W_p$. Let $m\geq 2$ be a natural number and let $\mathrm{Sym}(m)$ denote the group of permutations of the set $\{1, \dots, m\}$. For a subgroup $H\le \mathrm{Sym}(m)$, we define the \textit{iterated permutational wreath product} $W_H\le \mathrm{Aut}~T_{m}$ as 
$$W_H:=\{g\in \mathrm{Aut}~T_{m}\mid g|_v^1\in H \text{ for every }v\}.$$ 
Note that $W_H\cong\varprojlim H\wr \overset{n}{\dotsb}\wr H$. We generalize \cite[Theorem 2]{AbertVirag} to the group $W_H$ for an arbitrary transitive subgroup $H\le \mathrm{Sym}(m)$:

\begin{Theorem}
\label{Theorem: WH main result}
    Let $H\le \mathrm{Sym}(m)$ be a transitive group  with $m\ge 2$. Then there exists $b\ge 2$ such that
    $$\mathrm{hspec}_{\mathrm{b}}^\mathcal{S}(W_H)=\mathrm{hspec}_{\mathrm{fg}}^\mathcal{S}(W_H)=\mathrm{hspec}^\mathcal{S}(W_H)=[0,1].$$
\end{Theorem}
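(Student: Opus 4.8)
The three claimed equalities reduce to a single inclusion. Since one always has $\mathrm{hspec}_{\mathrm b}^{\mathcal S}(W_H)\subseteq \mathrm{hspec}_{\mathrm{fg}}^{\mathcal S}(W_H)\subseteq \mathrm{hspec}^{\mathcal S}(W_H)\subseteq[0,1]$, it suffices to find a fixed $b=b(H)\ge 2$ for which $\mathrm{hspec}_{\mathrm b}^{\mathcal S}(W_H)\supseteq[0,1]$, i.e. for every $\alpha\in[0,1]$ a $b$-generated closed subgroup of $W_H$ of $\mathcal S$-Hausdorff dimension exactly $\alpha$. (As a consistency check, $\mathrm{hspec}^{\mathcal S}(W_H)=[0,1]$ also follows directly from \cref{Theorem: full spectrum}: $H$ transitive makes $W_H$ level-transitive, and since $W_H$ is regular branch over itself its rigid level stabilizers coincide with the open, finite-index level stabilizers, $\mathrm{Rist}_{W_H}(n)=\mathrm{St}_{W_H}(n)$, which are fully dimensional for the nice filtration $\mathcal S$.) So the entire content is the bounded spectrum.

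The plan is to adapt the strategy of \cref{theorem: finitely generated spectrum of regular branch} and of \cite{AbertVirag} to the self-similar decomposition $\psi\colon W_H\to W_H\wr H$, but \emph{not} as a black box: when $H$ is not perfect, $W_H$ is not topologically finitely generated (the level quotients satisfy $(H^{(n)})^{\mathrm{ab}}\cong (H^{\mathrm{ab}})^{n}$, whose rank grows with $n$), and a direct computation with the recursion shows that any finitely generated regular branch subgroup of $W_H$ has $\mathcal S$-dimension strictly below $1$ unless it equals $W_H$. Thus $W_H$ contains no finitely generated branching subgroup of full dimension, so the reduction available in the regular branch case is genuinely unavailable here, and one must argue in the spirit of \cite{AbertVirag}. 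Concretely, I would establish a dimension estimate directly inside $W_H$: for $\Gamma=\langle g_1,\dots,g_b\rangle$ the normalized sizes $s_n:=\log_{|H|}|\Gamma:\mathrm{St}_\Gamma(n)|$ obey a recursion controlled by the subgroups generated by the level-one sections $\{g|_v\}$ over the vertices $v$, with $\mathrm{hdim}_{W_H}^{\mathcal S}(\overline\Gamma)=\liminf_n s_n/v_n$ where $v_n=(m^n-1)/(m-1)$. Transitivity of $H$ is the key structural input: it allows the section groups attached to a vertex to be steered so that they either generate all of $H$ (local contribution $1$) or are trivial (local contribution $0$), and hence lets the global density of ``active'' vertices dictate the dimension.

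With such an estimate, I would construct, for each target $\alpha$, an explicit family of generators carrying a single density parameter $q\in[0,1]$ that is encoded self-similarly by a finite-state rule, so that the number of generators $b=b(H)$ stays \emph{independent} of $q$ while the asymptotic frequency of vertices at which the sections fill $H$ equals $q$. The recursion for $s_n$ then gives $\mathrm{hdim}_{W_H}^{\mathcal S}(\overline{\Gamma_q})=f(q)$ for a continuous monotone $f$ with $f(0)=0$; choosing a nested, level-dependent encoding of the pattern (rather than a constant one) makes $f$ surjective onto all of $[0,1]$, including the delicate endpoint $\alpha=1$, which has to be realized by a finitely generated subgroup even though $W_H$ itself is not finitely generated. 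Since $b$ does not depend on $\alpha$, this yields a $b$-generated subgroup of every dimension and closes the argument.

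The main obstacle is exactly this dimension analysis for non-abelian $H$ together with the full-dimension endpoint. Unlike the cyclic case $H=C_p$ of \cite{AbertVirag}, the level-$k$ sections of finitely many generators generate subgroups of the non-abelian iterated wreath product $H^{(k)}$ whose growth is far harder to track, and one must show both that these section groups can be driven to fill $H$ with a prescribed asymptotic frequency and that, at frequency one, this is still achievable within a bounded number of generators (the point that subsumes, and explicitly realizes, the phenomenon behind Siegenthaler's $1$-dimensional binary example). I expect the technical heart to be making the self-similar encoding of the density simultaneously finite-state (so that $b$ is bounded), exactly tunable (continuity and surjectivity of $f$), and compatible with the non-abelian wreath recursion; transitivity of $H$ is what guarantees that the per-vertex contributions genuinely sum to the full $\log|H|$ at each active vertex, so that the sweep reaches $\alpha=1$.
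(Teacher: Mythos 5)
Your opening reduction is fine: it does suffice to produce, for a single $b=b(H)$, a $b$-generated closed subgroup of every dimension $\alpha\in[0,1]$, and your consistency check is also correct ($W_H$ is level-transitive, $\mathrm{Rist}_{W_H}(n)=\mathrm{St}_{W_H}(n)$ is of finite index, so \cref{Theorem: full spectrum} gives $\mathrm{hspec}^{\mathcal S}(W_H)=[0,1]$). From that point on, however, the proposal is a programme rather than a proof: both constructions that carry the actual content are announced (``I would establish a dimension estimate\dots'', ``I would construct\dots a family of generators carrying a density parameter'') and then deferred, and you yourself concede that the technical heart --- a finite-state, exactly tunable, wreath-compatible encoding whose dimension function is continuous, monotone and surjective, including at the endpoint $\alpha=1$ --- is left open. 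No recursion for $s_n$ is derived, no generators are written down, and no tool is offered for tracking $|\Gamma:\mathrm{St}_\Gamma(n)|$ for a finitely generated $\Gamma$ inside a non-abelian iterated wreath product, which is exactly the difficulty you flag. In the paper this content is supplied by two concrete mechanisms. First, \cref{theorem: existence of 1-dimensional subgroups in WH} produces a $(d(H)+1)$-generated subgroup $K\le W_H$ with $1$-dimensional closure; this is proved by constructing generalized \v{S}uni\'{c} groups $G_{H,r}$ with $d(G_{H,r})$ linear in $r$ and $\mathrm{hdim}_{W_H}^{\mathcal S}(\overline{G}_{H,r})\ge 1-(r+1)/m^{r-1}$ (\cref{lemma: existence of groups tending to 1}, which itself needs a case analysis on long cycles in $H$, deletion of levels, and the self-similar closure lemmas), and then packing the whole family into one group of bounded rank via Siegenthaler's nesting trick (\cref{lemma: Siegenthaler generalization}). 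Second, arbitrary dimensions are realized not by any continuity or intermediate-value argument on a density parameter, but exactly: for a non-comparable set $X$ with $\mu(X)=\alpha$ (\cref{lemma: existence of V}), the construction of \cref{lemma: construction of LI} yields a boundedly generated $L_X$ with $\prod_{n\ge 1}\overline{K'}*x_nY_n\le \overline{L}_X\le W_H*X$, whence $\mathrm{hdim}_{W_H}^{\mathcal S}(\overline{L}_X)=\mu(X)$ by \cref{proposition: dimension is a series} together with the fact that $\overline{K'}$ is still $1$-dimensional (\cite[Proposition 3.1]{JorgeMikel}). Nothing playing the role of either mechanism appears in your text.

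There is also a structural misreading in the step you use to justify changing strategy. You assert, with no computation supplied, that every finitely generated regular branch subgroup of $W_H$ has dimension strictly below $1$ unless it equals $W_H$, and conclude that ``the reduction available in the regular branch case is genuinely unavailable''. But \cref{lemma: construction of LI} does not need a finitely generated \emph{branching} subgroup of full dimension: branching and full dimension enter at two different places. The paper runs that construction inside $W_H$ verbatim, taking $W_H$ itself as the (non-finitely generated) branching subgroup and taking for $K$ the group from \cref{theorem: existence of 1-dimensional subgroups in WH}, which is neither normal nor branching --- the paper even remarks that normality can be dropped and the conjugating sections simplified. So the regular-branch machinery of \cref{theorem: finitely generated spectrum of regular branch} is reused essentially unchanged; the only genuinely new ingredient needed for $W_H$ is the explicit full-dimensional finitely generated $K$, i.e.\ the \v{S}uni\'{c}--Siegenthaler part, which is precisely the part your proposal does not construct.
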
 

In contrast to Abert and Virag's methods in \cite{AbertVirag}, the novelty of our approach is that we explicitly construct finitely generated subgroups of $W_H$ with a prescribed Hausdorff dimension. To that aim, we first extend the construction of the family of \v{S}uni\'{c} groups in $W_p$ (see \cite{SunicHausdorff}) to a family of groups in $W_H$. Then, we employ Siegenthaler's arguments in \cite{SiegenthalerBinary} in this more general context to obtain fully dimensional subgroups of $W_H$. The last steps of the proof rely on the same construction as in the proof of \cref{theorem: finitely generated spectrum of regular branch}.

In particular, \cref{Theorem: WH main result} yields the first explicit examples of finitely generated groups with closures of any prescribed Hausdorff dimension in $W_p$. Furthermore, for $W_p$ we are able to refine our construction to get $b=2$, giving further evidence to a conjecture of Abért and Virág \cite[Conjecture 7.3]{AbertVirag}:

\begin{Corollary}
\label{Corollary: Wp 2-bounded spectrum}
    Let $p$ be any prime number. Then 
    $$\mathrm{hspec}_{2}^\mathcal{S}(W_p)=[0,1].$$
\end{Corollary}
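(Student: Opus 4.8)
The plan is to read off \cref{Corollary: Wp 2-bounded spectrum} from the proof of \cref{Theorem: WH main result} applied to the cyclic group $H=\langle \sigma\rangle\cong C_p\le \mathrm{Sym}(p)$ generated by a single $p$-cycle $\sigma$, for which $W_H=W_p$, and then to optimise the generator count. The first observation is that, since $C_p$ is $1$-generated, the rooted part of every group in our family of $W_H$-subgroups is controlled by a single rooted automorphism $a$ realising $\sigma$ at the root; the remaining ``directed'' data of the \v{S}uni\'{c}-type construction, which for a general transitive $H$ may require several recursively defined automorphisms (forcing $b\ge 2$, possibly larger), can here be packaged into one directed automorphism $b$ because the admissible defining vectors are $\F_p$-linear data compatible with the cyclic action. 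I would first isolate this reduction, so that the fully dimensional subgroups obtained via the generalisation of Siegenthaler's argument are already of the form $\overline{\langle a,b\rangle}$.

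Given a target $\alpha\in[0,1]$, the idea is then to realise dimension $\alpha$ directly by a single spinal generator rather than by the scaling surgery of \cref{theorem: finitely generated spectrum of regular branch}: I would let $b=b_\omega$ be the directed automorphism attached to an infinite defining sequence $\omega=\omega_1\omega_2\cdots$ over the finite alphabet of admissible defining vectors, so that $\overline{\langle a,b_\omega\rangle}\le W_p$ is $2$-generated by construction. Using the index formula $\log|W_p:\mathrm{St}(n)|=\tfrac{p^n-1}{p-1}\log p$, the density $\log|\langle a,b_\omega\rangle:\langle a,b_\omega\rangle\cap\mathrm{St}(n)|\,/\,\log|W_p:\mathrm{St}(n)|$ becomes a running average of the per-level contributions prescribed by $\omega$, and $\hdim_{W_p}^{\mathcal S}(\overline{\langle a,b_\omega\rangle})$ is its limit inferior. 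Choosing the frequencies of the letters of $\omega$ so that this average has liminf exactly $\alpha$ --- arranging an eventual lower bound $\ge \alpha-\varepsilon$ while letting it dip below $\alpha+\varepsilon$ infinitely often --- pins the Hausdorff dimension at $\alpha$; the endpoints $\alpha\in\{0,1\}$ are covered by the trivial subgroup and by the fully dimensional generalised Siegenthaler subgroup, both $2$-generated.

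The main obstacle I anticipate is not the density estimate but the generator bookkeeping: one must show that collapsing the directed data to the single automorphism $b_\omega$ costs no achievable dimension, i.e. that a $2$-generated subgroup of $W_p$ can still exhibit the full oscillation required to realise every $\alpha\in[0,1]$, including irrationals, with liminf exactly $\alpha$. This is precisely the content of Abért and Virág's conjecture for $W_p$, and the delicate point is controlling the liminf from below with so few generators --- ensuring the density of $\overline{\langle a,b_\omega\rangle}$ never falls permanently below $\alpha$ --- which is exactly where the explicitness of our \v{S}uni\'{c}-type portraits, as opposed to the probabilistic method, does the work.
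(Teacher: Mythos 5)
There is a genuine gap, and it sits exactly at the only hard step: realizing the intermediate dimensions $\alpha\in(0,1)$ with two generators. Your endpoint cases are fine (the $1$-dimensional $2$-generated group is \cref{theorem: existence of 1-dimensional subgroups in WH} with $d(H)=1$), although your stated mechanism for it is not right: the generalized \v{S}uni\'{c} groups $G_{C_p,r}$ of \cref{lemma: existence of groups tending to 1} cannot have their directed data ``packaged into one directed automorphism'', since their directed part is elementary abelian of rank $r$, so $d(G_{C_p,r})=r+1$ grows; two generators only become available through the nested Siegenthaler construction of \cref{lemma: Siegenthaler generalization}, not through cyclicity of $H$. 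The fatal problem is your treatment of $\alpha\in(0,1)$: you propose a spinal group $\langle a,b_\omega\rangle$ with a single directed generator whose defining sequence $\omega$ is tuned so that the density along levels has liminf $\alpha$. There is no argument that the ``per-level contributions'' of such a group are freely prescribable, and in fact they are not. For $p=2$ (which the statement must cover) any group generated by the rooted involution and one directed automorphism is dihedral-like: its congruence quotients have order at most $2^{n+1}$, so its closure has Hausdorff dimension $0$ for every choice of $\omega$. For $p\ge 3$, the computations for GGS-type groups \cite{GGS} show that one directed generator yields a restricted set of dimensions bounded away from $1$; this per-level loss is precisely why approaching dimension $1$ inside the \v{S}uni\'{c} framework costs unboundedly many directed generators in \cref{lemma: existence of groups tending to 1}. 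You acknowledge that the crux ``is precisely the content of Ab\'ert and Vir\'ag's conjecture'' and then assert that the explicit portraits ``do the work''---that is assuming the conclusion at the critical step.

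The paper's proof avoids defining sequences entirely: the target dimension is realized \emph{spatially}, not by letter frequencies. One fixes the $2$-generated $1$-dimensional group $K=\langle k_1,k_2\rangle$ (from \cref{theorem: existence of 1-dimensional subgroups in WH}, or Siegenthaler's group for $p=2$), chooses a non-comparable set $X=\{x_n\}_{n\ge1}$ with $\mu(X)=\alpha$ via \cref{lemma: existence of V}, and compresses the surgery of \cref{lemma: construction of LI} into exactly two elements built from truncated adding machines,
$$g:=\prod_{n\ge 1}a_n*x_n,\qquad h:=\Big(\prod_{n\ge 1}k_1*x_n(1\overset{n}{\dotsb}1)\Big)\Big(\prod_{n\ge 1}k_2*x_n(1\overset{n}{\dotsb}1)a_n^{p^{n-1}}\Big),$$
so that $\prod_{n\ge1}\overline{K'}*x_nY_n\le \overline{\langle g,h\rangle}\le W_p*X$. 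Then \cite[Theorem 5]{AbertVirag} (giving $\overline{K'}$ full dimension) together with \cref{proposition: dimension is a series} yields $\hdim_{W_p}^{\mathcal{S}}(\overline{\langle g,h\rangle})=\mu(X)=\alpha$ as a genuine limit, so no liminf bookkeeping or oscillation control is needed. If you wanted to salvage your route, you would have to replace the spinal $b_\omega$ by exactly this kind of nested element carrying copies of $k_1,k_2$ below the vertices of $X$---which is the paper's construction.
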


To conclude the article, observe that the closure of many finitely generated branch groups are pro-$p$ groups themselves, and as such, they admit other natural  filtrations. We consider the closure of the first Grigorchuck group and two of the main standard filtrations on pro-$p$ groups: the $p$-central lower series $\mathcal{L}$ and the dimension subgroup or Jenning-Zassenhaus series $\mathcal{D}$. Let $\mathfrak{G}$ denote the first Grigorchuk group which acts on the binary rooted tree $T_2$ and is generated by four elements; say, $a,b,c,d$, where $a$ is the rooted automorphism given by $a:=(1\,2)\in \mathrm{Sym}(2)$, i.e. $a$ permutes the two subtrees hanging below the root, and the rest of the elements can recursively be defined as 
\[
\psi(b)=(a,c), \quad \psi(c)=(a,d)\quad \text{and}\quad  \psi(d)=(1,b).
\]
It is known that $\mathfrak{G}$ is  regular branch over its normal subgroup $K=\langle abab\rangle^\mathfrak{G}$; see \cite[Proposition 4.2]{BartholdiGrigorchuk2}. The lower 2-central series $\mathcal{L}$ and the Jenning-Zassenhaus series $\mathcal{D}$ of $\mathfrak{G}$ have been computed in \cite{BarthGrig} and in \cite{Rozhkov}. We obtain the following result:

\begin{Theorem}
\label{theorem: Grigorchuk spectra}
    Let $\Gamma\le \mathrm{Aut}~T_2$ be the closure of the first Grigorchuk group $\mathfrak{G}$ and let $\mathcal{L}$, $\mathcal{D}$ and $\mathcal S$ be its 2-central lower, Jenning-Zassenhaus and level stabilizer filtrations, respectively. Then there exists $b\ge 2$ such that
    $$\mathrm{hspec}_\mathrm{b}^\mathcal{L}(\Gamma)=\mathrm{hspec}_\mathrm{b}^\mathcal{D}(\Gamma)=\mathrm{hspec}_\mathrm{b}^\mathcal{S}(\Gamma)=[0,1].$$
\end{Theorem}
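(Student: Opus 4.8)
The plan is to deduce the entire statement from the second part of \cref{theorem: finitely generated spectrum of regular branch}, reducing everything to the verification that $\mathcal{L}$ and $\mathcal{D}$ are nice filtrations of $\Gamma$. First I would record that $\mathfrak{G}$ is finitely generated by $a,b,c,d$, that it is regular branch over $K=\langle abab\rangle^{\mathfrak{G}}$ by \cite[Proposition 4.2]{BartholdiGrigorchuk2}, and that it is just infinite by Grigorchuk's classical result. Hence $\mathfrak{G}$ satisfies all the hypotheses of \cref{theorem: finitely generated spectrum of regular branch}, and its second part produces a single $b\ge 2$ such that $\mathrm{hspec}_{\mathrm{b}}^\mathcal{N}(\Gamma)=[0,1]$ for \emph{every} nice filtration $\mathcal{N}$ of $\Gamma$. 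Since the level-stabilizer filtration $\mathcal{S}$ is nice, this already settles the claim for $\mathcal{S}$ (and fixes the value of $b$ to be used uniformly for all three filtrations). Thus it only remains to prove that the $2$-central lower series $\mathcal{L}$ and the Jennings--Zassenhaus series $\mathcal{D}$ are nice filtrations of $\Gamma$.

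Next I would set up the translation between the algebraic and the branch-theoretic pictures. As $\Gamma=\overline{\mathfrak{G}}$ is a topologically finitely generated pro-$2$ group, every term of $\mathcal{L}$ and of $\mathcal{D}$ is an open characteristic subgroup, and both are descending chains with trivial intersection. Moreover, because $\mathfrak{G}$ enjoys the congruence subgroup property, the terms of these series in $\Gamma$ are precisely the closures of the corresponding discrete terms for $\mathfrak{G}$; this is what allows me to import the explicit description of the $2$-central lower series from \cite{BarthGrig} and of the dimension subgroups from \cite{Rozhkov} directly as subgroups of $\mathrm{Aut}~T_2$, together with their position relative to the level stabilizers $\mathrm{St}(n)$.

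The heart of the argument is then to check the remaining, branch-theoretic axioms in the definition of a nice filtration (\cref{section: full and normal hausdorff spectrum}) for $\mathcal{L}$ and for $\mathcal{D}$: that each term is trapped between level stabilizers in the prescribed range, and, crucially, that the filtration is compatible with the branching map $\psi$ and the rigid stabilizers, in that for each $n$ there is an index $n'$ with a recursive containment of the shape $\psi\bigl(\mathcal{L}_n\cap\mathrm{St}(1)\bigr)\supseteq \mathcal{L}_{n'}\times \mathcal{L}_{n'}$ (and the analogous statement for $\mathcal{D}$), matched by the appropriate upper bound. I expect this compatibility check to be the main obstacle: unlike $\mathcal{S}$ and $\{\mathrm{Rist}_\Gamma(n)\}$, the filtrations $\mathcal{L}$ and $\mathcal{D}$ are defined purely group-theoretically, so one must convert their defining data—commutators and $2$-powers for $\mathcal{L}$, dimension subgroups for $\mathcal{D}$—into statements about the tree action and track how the known generators behave under $\psi$, using the self-similar recursions available from \cite{BarthGrig} and \cite{Rozhkov}. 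Once the niceness of $\mathcal{L}$ and $\mathcal{D}$ is established, the conclusion follows immediately by applying the second part of \cref{theorem: finitely generated spectrum of regular branch} to each of the three nice filtrations $\mathcal{L}$, $\mathcal{D}$ and $\mathcal{S}$, with the common value of $b$ obtained above.
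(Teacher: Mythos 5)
Your reduction fails at its central step: the $2$-central lower series $\mathcal{L}$ and the Jennings--Zassenhaus series $\mathcal{D}$ of $\Gamma$ are \emph{not} nice filtrations, so the second part of \cref{theorem: finitely generated spectrum of regular branch} cannot be invoked for them. The paper states this explicitly at the end of \cref{sec: NiceDefEx}, and the failure is concrete. Niceness (\cref{definition: nice filtration}) requires, for each $k$ and almost all $i$, that $\mathrm{Rist}_\Gamma(k)G_i/G_i$ be isomorphic to the direct product of the $\mathrm{rist}_\Gamma(v)G_i/G_i$; by order counting this forces $\log|\mathrm{Rist}_\Gamma(k)G_i:G_i|=\sum_v \log|\mathrm{rist}_\Gamma(v)G_i:G_i|$. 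Now take $k=1$ and $i=2^m+2$. By \cref{proposition: lower central and dimension series of grigorchuk group}, $\gamma_{2^m+2}(\mathfrak{G})=\alpha^{-1}(V_m^1)\beta^{-1}(V_{m-1}^1)N_{m+1}$, and the basis vectors $v_m^\ell=(1-X_1)^{\ell_1}\dotsb(1-X_m)^{\ell_m}$ of $V_m^1$ are not aligned with the coordinate decomposition induced by the level-one rigid stabilizers. Concretely, $v_m^1=1+X_1$ is the $\alpha$-image of a product $g_0g_1$, where $g_0$ has a single section $x$ at the level-$m$ vertex $0\dotsb 0$ (so $g_0\in\mathrm{rist}_\Gamma(0)$) and $g_1$ has a single section $x$ at the level-$m$ vertex $10\dotsb 0$ (so $g_1\in\mathrm{rist}_\Gamma(1)$). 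Then $g_0g_1\in\gamma_{2^m+2}(\mathfrak{G})$, while neither factor lies in it, since $\alpha(g_0)=1$ and $\alpha(g_1)=X_1$ do not belong to $V_m^1$. Hence the natural surjection from $\mathrm{rist}_\Gamma(0)G_i/G_i\times\mathrm{rist}_\Gamma(1)G_i/G_i$ onto $\mathrm{Rist}_\Gamma(1)G_i/G_i$ has nontrivial kernel, the orders strictly disagree, and this happens for every $m\ge 1$, i.e.\ for infinitely many $i$; the same computation with $D_{2^m+2}(\mathfrak{G})$ kills niceness for $\mathcal{D}$. (A secondary inaccuracy: the conditions you propose to verify---terms trapped between level stabilizers, compatibility with $\psi$---are not the definition of niceness given in \cref{section: full and normal hausdorff spectrum}.) Your argument is fine for $\mathcal{S}$, which is covered by \cref{theorem: finitely generated spectrum of regular branch}, but that part was never the issue.

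The paper's actual route is designed precisely to circumvent this obstruction. It introduces Rozhkov's auxiliary filtration $\{N_m\}_{m\ge 0}$, proves that it is a direct product filtration and hence nice (\cref{lemma:Nm_product_filtration}), and then establishes by hand the substitute for \cref{lemma: dimension of product}: \cref{proposition: grigorchuk rist dimension} shows that finite products of rigid stabilizers have strong Hausdorff dimension $\mu(V)$ with respect to $\mathcal{L}$. The proof interpolates between consecutive terms $N_m$ and $N_{m+1}$, using the explicit description of the intermediate terms $\gamma_{2^m+1+r}(\mathfrak{G})$ from \cref{proposition: lower central and dimension series of grigorchuk group} and the uniform estimate $f(r)/g(r)\le 2^{1-k}$ to show that the non-nice intermediate steps do not affect the limit; by \cref{proposition: lower central and dimension series of grigorchuk group} the argument for $\mathcal{D}$ is identical. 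With this in hand, the machinery of \cref{proposition: dimension is a series} and the construction of \cref{lemma: construction of LI} (applied over $K'$, which has finite index in $\mathfrak{G}$ by just-infiniteness) yield the theorem. So if you want to salvage your outline, replace the impossible niceness verification for $\mathcal{L}$ and $\mathcal{D}$ by such an interpolation argument relative to a genuinely nice auxiliary filtration.
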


\cref{theorem: Grigorchuk spectra} answers \cite[Problem 1.2]{IkerAnitha} for the $p$-central lower series and the Jenning-Zassenhaus series. Finally, looking at both \cref{theorem: finitely generated spectrum of regular branch} and \cref{theorem: Grigorchuk spectra}, we ask the following natural questions:

\begin{Question}
\label{question: other filtrations}
Is the Hausdorff spectrum of $\Gamma$ the whole interval $[0,1]$ with respect to other filtrations such as the Frattini or the $p$-power filtrations? Does, more generally, this last result hold for topologically finitely generated regular branch profinite groups?
\end{Question}

One of the major obstacles to answer \cref{question: other filtrations} is that the standard filtrations of branch groups are difficult to compute, and only a few of them have been computed; compare \cite{BartholdiLower, BarthGrig,  GustavoMikelMarialaura, Rozhkov,Vieira}.

\subsection*{\textit{\textmd{Organization}}} In \cref{section: Preliminaries} we set a background for the subsequent sections. We introduce the concept of nice filtrations in \cref{section: full and normal hausdorff spectrum} and prove \cref{Theorem: full spectrum}. \cref{section: finitely generated Hausdorff spectrum} is devoted to the study of the bounded Hausdorff spectrum of regular branch groups, where we prove \cref{theorem: finitely generated spectrum of regular branch}. In \cref{section: iterated wreath products} we deal with iterated wreath products and prove \cref{Theorem: WH main result}. Finally, in \cref{subsection: grigorchuk group}, we work with some of the standard filtrations in pro-$p$ groups and study the Hausdorff spectra of the first Grigorchuk group with respect to these standard filtrations, proving \cref{theorem: Grigorchuk spectra}.

\subsection*{\textit{\textmd{Notation}}} Throughout the text $p$ denotes a prime number. The integer $m\ge 2$ is used to denote the degree of the regular tree $T_m$, except in \cref{subsection: grigorchuk group} where it is used as a subindex for a filtration of the first Grigorchuk group acting on the binary rooted tree. All the logarithms will be taken in base $m$ when working with subgroups of $\mathrm{Aut}~T_m$,  and in base 2 otherwise, as in the non-regular case every result is independent of the base of the logarithms.  Maps will be considered to act on the right and we write composition from left to right, i.e. $fg$ means first apply $f$ and then apply $g$. Therefore, conjugation of $g$ by $h$ is given by $g^h=h^{-1}gh$ and commutators by $[g,h]=g^{-1}h^{-1}gh$. We shall use the notation $H\trianglelefteq G$ to denote a normal subgroup. For a finite set $S$ we denote its cardinality by $\#S$. Lastly, the topology of any group acting on the $m$-adic tree is always assumed to be the \textit{congruence topology}, i.e. the one given by its level stabilizers.

\subsection*{\textit{\textmd{Aknowledgements}}}
The second author thanks Alejandra Garrido for revealing Rozhkov's paper \cite{Rozhkov} while working in the project \cite{OihanaAlejandraBenjamin}.

\section{Preliminaries}
\label{section: Preliminaries}

In this section we present the main objects under study, namely groups acting on rooted trees; see \cite{Handbook,SelfSimilar} for an in depth introduction to these groups. As the automorphism group of a rooted tree is a countably based profinite group, we additionally introduce the notion of Hausdorff dimension in this context (see \cite{Jorge, PadicAnalytic} and the references therein) and we state some useful results that will be employed throughout the manuscript.

\subsection{The $\mathbf{m}$-adic tree and its group of automorphisms} For a sequence of natural numbers $\mathbf{m}=\{m_i\}_{i\ge 0}$ we define the $\mathbf{m}$-\textit{adic tree} $T_{\mathbf{m}}$ as the infinite rooted tree where each vertex at distance $i$ from the root has exactly $m_i$ descendants. If $m_i=m$ is constant we call it the \textit{$m$-adic tree} and denote it by $T_m$. For every $i\geq 1$, let $X_i$ be a set of $m_i$ elements. Then, each vertex of the {\bf m}-adic tree can be identified with a finite word on  the sets $X_i$. Moreover, the sets $X_i$ may be completely ordered, inducing a graded lexicographical order in the $\textbf{m}$-adic tree. Two vertices $u$ and~$v$ are joined by an edge if $u=vx$ with $x\in X_{d(v)}$. More generally, we say $u$ is below $v$ if $u=vx_1\dotsb x_r$ for some $x_i\in X_{d(v)+i-1}$. The \textit{$n$th level of the tree} $\mathcal{L}_n$ is defined to be the set of all vertices which consists of words of length $n$. We may also use the word level to refer to the number $n$. Note that the number of vertices in the $n$th level is precisely $N_n:=\prod_{i=0}^{n-1}m_i$. We shall also write $d(v)$ for the level at which a vertex $v\in T_{\mathbf{m}}$ lies.  For any integer $1\le k\le \infty$ and any vertex $v$, the \textit{$k$th truncated tree at $v$}, denoted by $T_{\mathbf{m}}^{[k,v]}$, and simply by $T_{\mathbf{m}}^{[v]}$ for $k=\infty$, consists of the vertices below $v$ at distance at most $k$ from $v$. Similarly, we will denote the tree $T_{\mathbf{m}}^{[k,\emptyset]}$ consisting of the first $k$ levels just by  $T_{\mathbf{m}}^{[k]}$, where here  $\emptyset$ denotes the root of the tree.

For a rooted tree $T$, let $\mathrm{Aut}~T$ denote the group of graph automorphisms of $T$, i.e. bijective maps on the set of vertices of $T$ preserving adjacency and fixing the root. Such automorphisms permute the vertices at the same level of the rooted tree.

Let $g\in\mathrm{Aut}~T_{\mathbf{m}}$. For any vertex $v$ and any integer $1\le k\le \infty$, we define the \textit{section of $g$ at $v$ of depth }$k$ as the unique automorphism $g|_v^k\in\mathrm{Aut}~T_{\mathbf{m}}^{[k,v]}$ such that $(vu)g=(v)g(u)g|_v^k$, for every $u$ of length at most $k$. For $k=\infty$, we write  $g|_v$ for simplicity, and we say that it is the \textit{section of $g$ at $v$}.  For $k=1$, we we say that $g|_v^1$ is the \textit{label of $g$ at $v$}. Note that with this notation, for every $g,h\in\Aut T_{\mathbf{m}}$, every vertex $v\in T_{\mathbf{m}}$ and $1\le k\le\infty$, the following relations hold:

\begin{align*}\label{eq:sections}
    (gh)|_v^k&=g|_v^k \cdot h|_{(v)g}^k,\\
    (g^{-1})|_v^k&=(g|_{(v)g^{-1}}^k)^{-1},\\
    (g^h)|_v^k&=(h|_{(v)h^{-1}}^k)^{-1}\cdot g|_{(v)h^{-1}}^k\cdot h|_{(v)h^{-1}g}^k.
\end{align*}

For any vertex $v$ of the tree $T_{\mathbf{m}}$, we define its \textit{vertex stabilizer} $\mathrm{st}(v)$ as the subgroup of automorphisms fixing the vertex $v$.
Similarly, for every $n\ge 1$, we define $\mathrm{St}(n)$ to be the set of automorphisms fixing all the vertices at the $n$th level of the tree, and we say that it is the \textit{$n$th level stabilizer} of $\mathrm{Aut}~T_{\mathbf{m}}$. In fact, 
for every $n\geq 1$, the subgroup $\mathrm{St}(n)$ is normal and of finite index in $\mathrm{Aut}~T_{\mathbf{m}}$, and we have that $\mathrm{St}(n)=\bigcap_{v \in \mathcal{L}_n } \mathrm{st}(v)$.

Since $\bigcap_{n\geq 1} \mathrm{St}(n)=1$, the descending chain $\{\mathrm{St}(n)\}_{n\geq 1}$ of normal subgroups of $\mathrm{Aut}~T_{\mathbf{m}}$ forms a filtration and so,  the group $\mathrm{Aut}~T_{\mathbf{m}}$ is residually finite. Furthermore $\mathrm{Aut}~T_\mathbf{m}$ is a countably-based profinite group with respect to the topology induced by the above filtration. 

For each vertex $v$ of $T_{\mathbf{m}}$ we shall define the continuous group homomorphism 
\[\begin{matrix} \varphi_v\colon &\mathrm{st}(v)&\to & \mathrm{Aut}~T_{\mathbf{m}}^{[v]}\\
&g&\mapsto &g|_v.
\end{matrix}
\]
For each integer $n\ge 1$, let $v_1,\dotsc,v_{N_n}$ denote the vertices at the $n$th level of the tree from left to right according to the graded lexicographical order. We then define the continuous isomorphisms
 \begin{equation*}
\begin{matrix} \psi_n\colon &\mathrm{St}(n)&\to & \mathrm{Aut}~T_{\mathbf{m}}^{[v_1]}\times\overset{N_n}{\ldots}\times \mathrm{Aut}~T_{\mathbf{m}}^{[v_{N_n}]}\\
&g&\mapsto &(g|_{v_1},\dotsc,g|_{v_{N_n}}).
\end{matrix}
\end{equation*}
 In what follows, we shall write $\psi:=\psi_1$ for simplicity.

An automorphism $g\in \mathrm{Aut}~T_\mathbf{m}$ is called \textit{finitary of depth $n$} if $g|_{v}^1=1$ for every $v$ at level $k\ge n$. Finitary automorphisms of depth 1 are called \textit{rooted automorphisms} and they may be identified with elements in the symmetric group $\mathrm{Sym}(m_0)$. More generally, the subgroup of finitary automorphisms of depth $n$ is isomorphic to the group $\mathrm{Aut}~T_{\mathbf{m}}^{[n]}$ via the restriction of the action of $\mathrm{Aut}~T_{\mathbf{m}}$ to the $n$th truncated tree at the root. This allows us to extend the isomorphisms 
$$\psi_n:\mathrm{St}(n)\to \mathrm{Aut}~T_{\mathbf{m}}^{[v_1]}\times\overset{N_n}{\ldots}\times \mathrm{Aut}~T_{\mathbf{m}}^{[v_{N_n}]}$$
to the isomorphisms 
$$\psi_n:\mathrm{Aut}~T_{\mathbf{m}}\to (\mathrm{Aut}~T_{\mathbf{m}}^{[v_1]}\times\overset{m^n}{\ldots}\times \mathrm{Aut}~T_{\mathbf{m}}^{[v_{N_n}]})\rtimes \mathrm{Aut}~T_{\mathbf{m}}^{[n]},$$
which,  by a slight abuse of notation, we shall also denote by $\psi_n$.

\subsection{Subgroups of $\mathrm{Aut}~T_{\mathbf{m}}$}\label{subsec: subgroups}
Let $v$ be a vertex of $T_{\mathbf{m}}$ and let $n\geq 1$ be an integer. For a subgroup $G\le \mathrm{Aut}~T_{\mathbf{m}}$, we define 
$$\mathrm{st}_G(v):=\mathrm{st}(v)\cap G \quad \text{and}\quad \mathrm{St}_G(n):=\mathrm{St}(n)\cap G.$$
The quotients $G/\mathrm{St}_G(n)$ are called the \textit{congruence quotients} of $G$.

Similarly, for any vertex $v$, let $\mathrm{rist}_G(v)$ be the associated \textit{rigid vertex stabilizer}, i.e. the subgroup of $\mathrm{st}_G(v)$ consisting of automorphisms $g\in \mathrm{st}_G(v)$ such that $g$ only acts non-trivially on the subtree rooted at $v$. Then, for any $n\ge 1$, we define the corresponding \textit{rigid level stabilizers} $\mathrm{Rist}_G(n)$ as the product of all rigid vertex stabilizers of the vertices at the level~$n$. Note that rigid level stabilizers are normal subgroups of $G$ as any conjugate of a rigid vertex stabilizer is a rigid vertex stabilizer of the same level. If $G$ is a closed subgroup of $\mathrm{Aut}~T_{\mathbf{m}}$ then both the rigid vertex and the rigid level stabilizers are closed subgroups of $G$.

 \begin{definition}
Let $G\leq \mathrm{Aut}~T_{\mathbf{m}}$ be a subgroup. We say that $G$ is
\begin{enumerate}[\normalfont(i)]
\item  \textit{level-transitive} if $G$ acts transitively on every level of the tree;
\item \textit{weakly branch} if $G$ is level-transitive and its rigid level stabilizers are non-trivial for every level; 
\item  \textit{branch} if it is weakly branch and its rigid stabilizers are of finite index in $G$. 
\end{enumerate}
 \end{definition}

Now let us consider the $m$-adic tree $T_{m}$ and let $G$ be a subgroup of $\mathrm{Aut}~T_{m}$. We say that $K\leq G$ is a {\it branching subgroup of $G$} if $\psi(\mathrm{St}_K(1))\ge K\times\dotsb\times K$.

\begin{definition}
We say that $G\le \mathrm{Aut}~T_m$ is 
\begin{enumerate}[\normalfont(i)]
    \item \textit{self-similar} if for any $g\in G$ and any vertex $v$, we have $g|_{v}\in G$;
    \item \textit{fractal} if $G$ is self-similar and for every vertex $v\in T_m$ and every $g\in G$, there exists $h\in \mathrm{st}_G(v)$ such that $h|_v=g$;
    \item \textit{strongly fractal} if $G$ is self-similar and for every vertex $v\in \mathcal{L}_1$ and every $g\in G$, there exists $h\in \mathrm{St}_G(1)$ such that $h|_v=g$; 
    \item \textit{(weakly) regular branch} if $G$ is self-similar, level-transitive group containing a (non-trivial) finite-index branching subgroup $K$.
\end{enumerate}
\end{definition}

A \textit{just infinite discrete} group is an infinite discrete group such that all its proper quotients are finite. We define \textit{just infinite profinite} groups as those infinite profinite groups whose normal non-trivial closed subgroups are open, i.e. infinite profinite groups whose continuous proper quotients are all finite.

\subsection{Hausdorff dimension}

Countably-based profinite groups are metrizable and this allows the definition of Hausdorff measures and therefore, a Hausdorff dimension for its subsets. It was proved by Barnea and Shalev in \cite{BarneaShalev}, based on the work of Abercrombie \cite{Abercrombie}, that for a profinite group the Hausdorff dimension of its closed subgroups coincides with their lower box dimension.

For a countably-based profinite group $G$ and a filtration of open normal subgroups $\{G_n\}_{n\ge 1}$ of $G$, the formula for the Hausdorff dimension of a closed subgroup $H\le G$ with respect to $\{G_n\}_{n\ge 1}$ reads as the following limit inferior
\begin{align}
\label{align: hdim def}
    \mathrm{hdim}^{\{G_n\}_{n\ge 1}}_{G}(H)=\liminf_{n\to \infty}\frac{\log|HG_n:G_n|}{\log|G:G_n|}.
\end{align}
If the above limit exists, then we say that $H$ has \textit{strong} Hausdorff dimension in $G$. Note that if the Hausdorff dimension of a subgroup $H$ is $1$, then it is strong.

Let $\mathcal{P}$ be a property that may be satisfied by the closed subgroups of $G$. We define the \textit{$\mathcal{P}$ Hausdorff spectrum} of $G$ (with respect to $\{G_n\}_{n\ge 1}$) as the set of the values of the Hausdorff dimensions of its closed subgroups (with respect to $\{G_n\}_{n\ge 1}$ that satisfy property $\mathcal{P}$). We are interested in the normal, the finitely generated and the $b$-bounded Hausdorff spectrum,  which we shall denote them by
$$\mathrm{hspec}_{\trianglelefteq}^{\{G_n\}_{n\ge 1}}(G),\quad \mathrm{hspec}_{\mathrm{fg}}^{\{G_n\}_{n\ge 1}}(G)\quad \text{and}\quad \mathrm{hspec}_{\mathrm{b}}^{\{G_n\}_{n\ge 1}}(G),$$
respectively. Here, for the $b$-bounded Hausdorff spectrum we only consider closed subgroups topologically generated by at most $b$ elements for some $b\ge 1$. 

As we mentioned before, we are interested in studying Hausdorff spectra of (topological closures of) certain subgroups of the automorphism group of a rooted tree. So, we will state a result that provides an explicit formula to compute the Hausdorff dimension of such subgroups (see Theorem \ref{thm: Jorge_strong} below).  Let $m\geq 2$ be an integer. For a subgroup $H\le \mathrm{Sym}(m)$, we define the \textit{iterated permutational wreath product} $W_H\le \mathrm{Aut}~T_{m}$ as 
$$W_H:=\{g\in \mathrm{Aut}~T_{m}\mid g|_v^1\in H \text{ for every }v\in T_m\}.$$ 
In particular, $W_{\mathrm{Sym}(m)}$ is simply $\mathrm{Aut}~T_{m}$. Note also that $W_H\cong\varprojlim H\wr \overset{n}{\dotsb}\wr H$.  For a prime number $p$, let $\sigma:=(1\,2\,\dotsb \, p)\in \mathrm{Sym}(p)$. Then we define the group of \textit{$p$-adic automorphisms} as $W_p:=W_{\langle \sigma\rangle}$.

We specialize to the case where $G\le W_H\le\mathrm{Aut}~T_m$. Consider the level stabilizers filtration $\mathcal{S}:=\{\mathrm{St}_G(n)\}_{n\ge 1}$. As $\mathrm{St}(n)\cap G=\mathrm{St}_G(n)$ we will abuse the notation and call $\mathcal{S}$ both to the filtration of $\Aut T_{m}$ and to the induced one in $G$.

Now, define, for each $n\geq 1$, the sequence of real numbers
$$\mathfrak{s}_n:=m\log|\mathrm{St}_G(n-1):\mathrm{St}_G(n)|-\log|\mathrm{St}_G(n):\mathrm{St}_G(n+1)|$$
first introduced in \cite{GeneralizedBasilica}. We consider the ordinary generating function 
\[
S_{G}(x):=\sum_{n\ge 1}\mathfrak{s}_nx^n.
\]
The following result was proven by the first author in \cite[Theorem B]{Jorge}. 

\begin{theorem}
\label{thm: Jorge_strong}
Assume that $G\le W_H\le\mathrm{Aut}~T_m$ is either self-similar or branching, then the Hausdorff dimension of the closure of $G$ in $W_H$ with respect to $\mathcal{S}$ is strong and it may be computed simply as

$$\mathrm{hdim}_{W_H}^\mathcal{S}(\overline{G})=\frac{1}{\log|H|}\left(\log|G:\mathrm{St}_G(1)|-S_{G}(1/m)\right).$$
\end{theorem}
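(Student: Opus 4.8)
The plan is to compute the $\liminf$ in \eqref{align: hdim def} for the closed subgroup $\overline{G}\le W_H$ directly, reducing everything to the asymptotics of the congruence quotients of $G$. Since $G$ is dense in $\overline{G}$ and each level stabilizer is open, one has $G\cdot\mathrm{St}_{\overline{G}}(n)=\overline{G}$ and hence $|\overline{G}:\mathrm{St}_{\overline{G}}(n)|=|G:\mathrm{St}_G(n)|$, so the numerator of \eqref{align: hdim def} equals $\log|G:\mathrm{St}_G(n)|$. For the denominator, $W_H/\mathrm{St}_{W_H}(n)\cong H\wr\overset{n}{\cdots}\wr H$ has order $|H|^{(m^n-1)/(m-1)}$, since the vertices at levels $0,\dots,n-1$ number $(m^n-1)/(m-1)$ and each contributes a label in $H$. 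Writing $a_k:=\log|\mathrm{St}_G(k-1):\mathrm{St}_G(k)|$, so that $\log|G:\mathrm{St}_G(n)|=\sum_{k=1}^{n}a_k$ and $a_1=\log|G:\mathrm{St}_G(1)|$, the defining relation of $\mathfrak{s}_n$ becomes the recursion $a_{n+1}=m\,a_n-\mathfrak{s}_n$, and the goal is to evaluate
$$
\mathrm{hdim}_{W_H}^{\mathcal{S}}(\overline{G})=\frac{m-1}{\log|H|}\liminf_{n\to\infty}\frac{1}{m^n-1}\sum_{k=1}^{n}a_k.
$$

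The key step, where the hypotheses on $G$ enter, is the inequality $\mathfrak{s}_n\ge 0$, equivalently $a_{n+1}\le m\,a_n$. In the self-similar case I would set $P:=\psi(\mathrm{St}_G(1))\le G\times\overset{m}{\cdots}\times G$ and observe that, for every $n\ge 1$,
$$
\psi(\mathrm{St}_G(n))=P\cap\big(\mathrm{St}_G(n-1)\times\overset{m}{\cdots}\times\mathrm{St}_G(n-1)\big),
$$
because $g\in\mathrm{St}_G(1)$ lies in $\mathrm{St}_G(n)$ exactly when each section $g|_v$ (which is in $G$ by self-similarity) fixes the $n-1$ levels below $v\in\mathcal{L}_1$. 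Applying this at $n$ and at $n+1$, together with the elementary index inequality $|P\cap A:P\cap B|\le|A:B|$ valid whenever $B\trianglelefteq A$ (which follows since $P\cap A\hookrightarrow A/B$ has kernel $P\cap B$), with $A=\mathrm{St}_G(n-1)^m$ and $B=\mathrm{St}_G(n)^m$, yields
$$
a_{n+1}=\log|P\cap A:P\cap B|\le\log|A:B|=m\log|\mathrm{St}_G(n-1):\mathrm{St}_G(n)|=m\,a_n.
$$
In the branching case one argues analogously from $P\supseteq G^m$, comparing $G$-stabilizers with the ambient level stabilizers; making this dichotomy uniform is the part I expect to require the most care, and it is the genuine obstacle of the proof.

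Granting $\mathfrak{s}_n\ge 0$, put $b_n:=a_n/m^n$. Dividing the recursion by $m^{n+1}$ gives $b_{n+1}=b_n-\mathfrak{s}_n m^{-(n+1)}$, so $(b_n)$ is non-increasing and bounded below by $0$, hence convergent; telescoping yields
$$
\lim_{n\to\infty}b_n=\frac{a_1}{m}-\sum_{k\ge 1}\mathfrak{s}_k m^{-(k+1)}=\frac{1}{m}\big(a_1-S_G(1/m)\big),
$$
which in particular shows that $S_G(1/m)=\sum_{k\ge 1}\mathfrak{s}_k m^{-k}$ converges. Consequently $a_n/m^{n-1}=m\,b_n\to a_1-S_G(1/m)$. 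I would then finish by Stolz--Cesàro applied to $x_n=\sum_{k=1}^{n}a_k$ and $y_n=(m^n-1)/(m-1)$: the sequence $(y_n)$ is strictly increasing to $\infty$, and
$$
\frac{x_n-x_{n-1}}{y_n-y_{n-1}}=\frac{a_n}{m^{n-1}}\longrightarrow a_1-S_G(1/m),
$$
whence $x_n/y_n\to a_1-S_G(1/m)$. Dividing by $\log|H|$ gives the claimed value; moreover the $\liminf$ is a genuine limit, so the Hausdorff dimension is strong. Once the structural inequality $\mathfrak{s}_n\ge 0$ of the second paragraph (and hence convergence of $(b_n)$) is secured, everything else is this formal computation.
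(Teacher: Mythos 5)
First, a point of reference: the paper does not prove this statement at all --- it is quoted from \cite[Theorem B]{Jorge} --- so there is no in-paper argument to compare yours against, and I assess your proposal on its own terms and against the facts the paper records around the statement. Much of it is correct: the identification $|\overline{G}:\mathrm{St}_{\overline{G}}(n)|=|G:\mathrm{St}_G(n)|$, the count $\log|W_H:\mathrm{St}_{W_H}(n)|=\frac{m^n-1}{m-1}\log|H|$, the recursion $a_{n+1}=m\,a_n-\mathfrak{s}_n$, the telescoping identity $b_{n+1}=b_n-\mathfrak{s}_n m^{-(n+1)}$ for $b_n:=a_n/m^n$, and the Stolz--Ces\`aro step are all sound, and together they correctly reduce the theorem (including strongness of the dimension) to the single claim that $(b_n)$ converges. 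Your proof of $\mathfrak{s}_n\ge 0$ in the self-similar case, via $\psi(\mathrm{St}_G(n))=\psi(\mathrm{St}_G(1))\cap\big(\mathrm{St}_G(n-1)\times\overset{m}{\dotsb}\times\mathrm{St}_G(n-1)\big)$, is also correct.

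The genuine gap is the branching case, which you defer as ``the genuine obstacle'' and, more importantly, attack in the wrong direction: for a branching group the inequality $\mathfrak{s}_n\ge 0$ is false in general, and the correct inequality is the reverse one. The paper itself records this, quoting \cite[Proposition 1.1]{Jorge}: for branching groups the sequence $\{\mathfrak{r}_n\}$ is non-positive and \emph{decreasing}, and since $\mathfrak{s}_n=\mathfrak{r}_{n+1}-\mathfrak{r}_n$ this means $\mathfrak{s}_n\le 0$, i.e.\ $a_{n+1}\ge m\,a_n$. One sees this directly: writing $P:=\psi(\mathrm{St}_G(1))\supseteq G\times\overset{m}{\dotsb}\times G$, one has $\psi(\mathrm{St}_G(n+1))=P\cap\big(\mathrm{St}(n)\times\overset{m}{\dotsb}\times\mathrm{St}(n)\big)$ with \emph{ambient} stabilizers (sections need not lie in $G$), and the inclusion of $\mathrm{St}_G(n-1)\times\overset{m}{\dotsb}\times\mathrm{St}_G(n-1)$ into $P\cap\big(\mathrm{St}(n-1)\times\overset{m}{\dotsb}\times\mathrm{St}(n-1)\big)$ induces an injection of the corresponding quotients, giving $m\,a_n\le a_{n+1}$. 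A concrete counterexample to your claim is $G=\mathrm{St}_{W_2}(1)\le W_2$, which is branching and has $\mathfrak{s}_1=-2$. Consequently, for branching $G$ your sequence $(b_n)$ is non-\emph{de}creasing, and ``bounded below by $0$'' gives nothing; convergence must instead come from an upper bound, which holds because $G\le W_H$ forces $a_n\le\log|\mathrm{St}_{W_H}(n-1):\mathrm{St}_{W_H}(n)|=m^{n-1}\log|H|$, i.e.\ $b_n\le \log|H|/m$. With this repair --- $(b_n)$ decreasing and bounded below in the self-similar case, increasing and bounded above in the branching case --- the rest of your computation goes through verbatim; as written, however, your proposal establishes only the self-similar half of the theorem.
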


The sequence $\{\mathfrak{s}_n\}_{n\ge 1}$ can also be described as the forward gradient of the sequence $\{\mathfrak{r}_n\}_{n\ge 1}$ given by
$$\mathfrak{r}_n:=m\log|G:\mathrm{St}_G(n-1)|-\log|G:\mathrm{St}_G(n)|+\log|G:\mathrm{St}_G(1)|.$$
For self-similar groups, the sequence $\{\mathfrak{r}_n\}_{n\ge 1}$ is non-negative and increasing, and for branching groups, it is non-positive and decreasing; see \cite[Proposition 1.1]{Jorge}. In fact, thanks to \cite[Equation 3.1]{Jorge}, for a self-similar group $G\le \mathrm{Aut}~T_m$, we have
$$\mathfrak{r}_n=\log|G\times\overset{m}{\dotsb}\times G:\mathfrak{S}_n|\le \log|G\times\overset{m}{\dotsb}\times G:\psi(\mathrm{St}_G(1))|,$$
for every $n\ge 1$, where
$$\mathfrak{S}_n:=\psi(\mathrm{St}_G(1))\big(\mathrm{St}_G(n-1)\times\overset{m}{\dotsb}\times\mathrm{St}_G(n-1)\big).$$

\begin{lemma}[{{\cite[Lemma 3.5]{PadicAnalytic}}}]
    \label{lemma: strong dimension then product}
    Let $G$ be a countably-based profinite group with closed subgroups $K\le H\le G$. Consider a filtration series  $\{G_n\}_{n\ge 1}$ and the induced one $\{H_n\}_{n\geq 1}$ in $H$, where $H_n=H\cap G_n$. If either $K$ has strong Hausdorff dimension in $H$ or $H$ has strong Hausdorff dimension in $G$, then
    $$\mathrm{hdim}_G^{\{G_n\}_{n\ge 1}}(K)=\mathrm{hdim}_G^{\{G_n\}_{n\ge 1}}(H)\cdot \mathrm{hdim}_H^{\{H_n\}_{n\ge 1}}(K).$$
\end{lemma}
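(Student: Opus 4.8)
The plan is to reduce the statement to an elementary fact about limits inferior of products of real sequences, where the hypothesis that one of the subgroups has \emph{strong} Hausdorff dimension is exactly what guarantees that one of the two relevant factor sequences actually converges. First I would fix notation for the three logarithmic counting sequences: for $n\ge 1$ set
$$a_n:=\log|K:K\cap G_n|,\qquad b_n:=\log|H:H\cap G_n|,\qquad c_n:=\log|G:G_n|.$$
Since $H_n=H\cap G_n$ and $K\le H$, we have $K\cap H_n=K\cap G_n$, so by the Hausdorff dimension formula \eqref{align: hdim def},
$$\mathrm{hdim}_G^{\{G_n\}}(K)=\liminf_n\frac{a_n}{c_n},\qquad \mathrm{hdim}_G^{\{G_n\}}(H)=\liminf_n\frac{b_n}{c_n},\qquad \mathrm{hdim}_H^{\{H_n\}}(K)=\liminf_n\frac{a_n}{b_n}.$$
The key observation is the trivial identity $\frac{a_n}{c_n}=\frac{a_n}{b_n}\cdot\frac{b_n}{c_n}$, valid for all $n$ large enough that $b_n>0$ (which holds eventually unless $H$, and hence $K$, is trivial, a degenerate case in which every term of the claimed identity is zero). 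From the containments $K\le H\le G$ one gets $0\le a_n\le b_n\le c_n$, so both factor sequences $a_n/b_n$ and $b_n/c_n$ lie in $[0,1]$.

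Next I would isolate the one-line analytic lemma doing all the work: if $(u_n)$ is a bounded sequence of nonnegative reals and $(v_n)$ converges to some $v\ge 0$, then $\liminf_n(u_nv_n)=v\cdot\liminf_n u_n$. When $v>0$ this follows by sandwiching $u_n(v-\varepsilon)\le u_nv_n\le u_n(v+\varepsilon)$ for $n$ large and letting $\varepsilon\to 0$; when $v=0$ it follows from $0\le u_nv_n\le Mv_n\to 0$, where $M$ bounds $(u_n)$.

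Finally I would apply this lemma in the two cases of the hypothesis, using the factorization above. If $H$ has strong dimension in $G$, then $v_n:=b_n/c_n$ converges to $\mathrm{hdim}_G^{\{G_n\}}(H)$, and taking $u_n:=a_n/b_n\in[0,1]$ yields $\liminf_n a_n/c_n=\mathrm{hdim}_G^{\{G_n\}}(H)\cdot\liminf_n a_n/b_n$, which is precisely the desired product. If instead $K$ has strong dimension in $H$, then $v_n:=a_n/b_n$ converges to $\mathrm{hdim}_H^{\{H_n\}}(K)$, and taking $u_n:=b_n/c_n\in[0,1]$ gives the same conclusion with the roles of the two factors swapped.

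The main obstacle, and really the only genuine point, is that the limit inferior is not multiplicative, so neither factorization can be pushed through the $\liminf$ without a convergence hypothesis; this is exactly why one of the two subgroups must have strong Hausdorff dimension. The remaining care is bookkeeping at the boundary: the degenerate trivial-subgroup cases (where some $b_n=0$) and the borderline $v=0$ situation, both of which are dispatched cleanly using the uniform bounds $a_n/b_n,\,b_n/c_n\in[0,1]$.
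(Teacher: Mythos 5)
Your proposal is correct and follows essentially the same route as the paper: the paper's proof consists precisely of the factorization $\frac{\log|K:K\cap G_n|}{\log|G:G_n|}=\frac{\log|H:H_n|}{\log|G:G_n|}\cdot\frac{\log|K:K\cap G_n|}{\log|H:H_n|}$ followed by taking lower limits, using that one factor converges by the strong-dimension hypothesis. The only difference is that you spell out the analytic lemma ($\liminf$ of a product with a convergent nonnegative factor) and the degenerate boundary cases, which the paper leaves implicit.
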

\begin{proof}
    It follows from the equality
    $$\frac{\log |K:K\cap G_n|}{\log| G:G_n|}=\frac{\log |H:H_n|}{\log |G:G_n|}\cdot \frac{\log |K:K\cap G_n|}{\log |H: H_n|}$$
    by taking lower limits, as one of the factors on the right hand side converges by assumption.
\end{proof}

The following lemma shows that taking the topological closure of a finite index subgroup does not decrease the index:

\begin{lemma}
    \label{lemma: indices grow in closures}
    Let $G\le \mathrm{Aut}~T_{\mathbf{m}}$ and $H\le G$ a subgroup of finite index. Then $ |\overline{G}:\overline{H}|\le |G:H|$ where the closure is taken with respect to the congruence topology.
\end{lemma}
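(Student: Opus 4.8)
The plan is to reduce the statement to an elementary fact about topological groups, namely that the closure of a finite union of (right) cosets is the union of their closures. The key point is that finiteness of the index lets us express $G$ as a finite union of cosets of $H$, and closures behave well under finite unions and translations.

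First I would fix a set of right coset representatives $g_1,\dots,g_k\in G$, where $k=|G:H|$, so that $G=\bigcup_{i=1}^{k} H g_i$. Since $H\subseteq G$ we automatically get $\overline{H}\subseteq\overline{G}$, and because the closure of a subgroup in a topological group is again a subgroup, $\overline{H}$ is a closed subgroup of $\overline{G}$; the $g_i$ lie in $G\subseteq\overline{G}$, so it will make sense to speak of the cosets $\overline{H}g_i$ inside $\overline{G}$.

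Next I would take closures in $\mathrm{Aut}~T_{\mathbf{m}}$ and invoke two standard facts: the closure of a finite union equals the union of the closures, and right translation by a fixed element of the topological group $\mathrm{Aut}~T_{\mathbf{m}}$ (equipped with the congruence topology) is a homeomorphism, hence commutes with closure. Combining these gives $\overline{G}=\overline{\bigcup_{i=1}^{k} H g_i}=\bigcup_{i=1}^{k}\overline{Hg_i}=\bigcup_{i=1}^{k}\overline{H}g_i$, exhibiting $\overline{G}$ as a union of at most $k$ right cosets of $\overline{H}$.

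Finally I would conclude that $|\overline{G}:\overline{H}|\le k=|G:H|$, since a subgroup whose cosets cover the ambient group in at most $k$ pieces has index at most $k$; note that distinct cosets $Hg_i$ may collapse to the same coset $\overline{H}g_i$ after passing to closures, which only decreases the index and is fully consistent with the claimed inequality. There is no serious obstacle here: the closest thing to a subtle point is justifying that right translation is a homeomorphism for the congruence topology, but this is immediate because $\mathrm{Aut}~T_{\mathbf{m}}$ is a topological group in which the level stabilizers $\mathrm{St}(n)$ form a neighborhood basis at the identity, so the general theory of topological groups applies verbatim.
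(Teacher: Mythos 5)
Your proposal is correct and takes essentially the same route as the paper: both decompose $G$ into finitely many cosets of $H$ and deduce that $\overline{G}$ is covered by the corresponding cosets of $\overline{H}$, the only cosmetic differences being your use of right rather than left cosets and your appeal to the ready-made fact that closure commutes with finite unions, where the paper proves this by hand (the finite union of closed cosets is closed, contains $G$, hence contains $\overline{G}$).
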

\begin{proof}
    Let us write $G=\bigsqcup_{i=1}^ng_iH$. Then
    $$\overline{G}\supseteq \bigcup_{i=1}^ng_i\overline{H}\supseteq \bigsqcup_{i=1}^ng_iH=G$$
    and $\bigcup_{i=1}^ng_i\overline{H}$ is closed being the finite union of closed sets. Then since $\bigcup_{i=1}^ng_i\overline{H}$ contains $G$ and it is closed, it must contain the closure of $G$ and $\overline{G}= \bigcup_{i=1}^ng_i\overline{H}$ so the result follows.
\end{proof}

The following lemma is a well-known result; see for instance \cite[Lemma 4]{pro-c} or \cite[Lemmata 2.16 and 2.17]{Dominik} for a proof.

\begin{lemma}
    \label{lemma: normal subgroups contain derived of rist}
    Let $G\le \mathrm{Aut}~T_{\mathbf{m}}$ be a level-transitive subgroup and let $N\trianglelefteq G$ be a normal subgroup. Then there exists $n\ge 1$ such that $N\ge \mathrm{Rist}_G(n)'$. Furthermore if $G$ is weakly branch then $\mathrm{Rist}_G(n)^{(k)}\ne 1$ for every $n,k\ge 1$.
\end{lemma}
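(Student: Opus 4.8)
The plan is to prove the two assertions separately, in each case reducing to a statement about the rigid vertex stabilizer of a single vertex. The reduction rests on one observation: for a fixed level $n$ the subtrees hanging from the distinct vertices of $\mathcal{L}_n$ are pairwise disjoint, so the subgroups $\{\mathrm{rist}_G(v)\}_{v\in\mathcal{L}_n}$ pairwise commute and pairwise intersect trivially. Hence $\mathrm{Rist}_G(n)=\prod_{v\in\mathcal{L}_n}\mathrm{rist}_G(v)$ is an internal direct product, so that $\mathrm{Rist}_G(n)^{(k)}=\prod_{v\in\mathcal{L}_n}\mathrm{rist}_G(v)^{(k)}$ for every $k\ge 0$; in particular it suffices to control a single factor. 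Moreover, since $G$ acts transitively on $\mathcal{L}_n$, the rigid vertex stabilizers of vertices at level $n$ are all conjugate in $G$, with $\mathrm{rist}_G(v)^h=\mathrm{rist}_G((v)h)$ for $h\in G$.

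For the first assertion I may assume $N\neq 1$ and pick $1\neq g\in N$. As $g$ preserves levels and acts nontrivially, it sends some vertex $u$ to a distinct vertex $w=(u)g$ of the same level; being distinct vertices of a common level, $u$ and $w$ are incomparable, so their hanging subtrees are disjoint. The core computation is that $a^g\in\mathrm{rist}_G(w)$ whenever $a\in\mathrm{rist}_G(u)$, so for such $a$ the element $[a,g]=a^{-1}a^g$ has its two factors supported on disjoint subtrees. Taking a second $b\in\mathrm{rist}_G(u)$, I would apply the identity $[xy,z]=[x,z]^y[y,z]$ with $(x,y,z)=(a^{-1},a^g,b)$, and use $[a^g,b]=1$ (disjoint supports), to obtain $[[a,g],b]=[a^{-1},b]^{a^g}$. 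Since $g\in N\trianglelefteq G$ we have $[a,g]\in N$, hence $[[a,g],b]\in N$, and normality lets me conjugate back to conclude $[a^{-1},b]\in N$. Letting $a,b$ range over $\mathrm{rist}_G(u)$ gives $\mathrm{rist}_G(u)'\le N$, and conjugating by the transitive action spreads this to $\mathrm{rist}_G(v)'\le N$ for every $v$ at level $n:=d(u)$. The direct product decomposition then yields $\mathrm{Rist}_G(n)'=\prod_v\mathrm{rist}_G(v)'\le N$.

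For the second assertion I would prove, by induction on $k$, that $\mathrm{rist}_G(v)^{(k)}\neq 1$ for every vertex $v$; the claim for $\mathrm{Rist}_G(n)$ follows since $\mathrm{rist}_G(v)\le\mathrm{Rist}_G(n)$ for any fixed $v\in\mathcal{L}_n$ and derived subgroups are monotone. The case $k=0$ is exactly the weakly branch hypothesis combined with level-transitivity, which force every $\mathrm{rist}_G(v)$ to be nontrivial. For the step, fix $v$, choose $1\neq g\in\mathrm{rist}_G(v)$, and as above find $u$ strictly below $v$ moved by $g$ to an incomparable $u'=(u)g$, with $a^g\in\mathrm{rist}_G(u')$ for $a\in\mathrm{rist}_G(u)$. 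I then consider $E:=\langle [a,g]\mid a\in\mathrm{rist}_G(u)\rangle$, which lies in $\mathrm{rist}_G(v)'$ because $\mathrm{rist}_G(u)\le\mathrm{rist}_G(v)$. Each generator $[a,g]=a^{-1}a^g$ fixes $u$ and restricts on the subtree at $u$ to $a^{-1}|_u$, since $a^g$ is supported on the disjoint subtree at $u'$; thus the section map $\varphi_u\colon\mathrm{st}(u)\to\mathrm{Aut}~T_{\mathbf{m}}^{[u]}$ restricts to a homomorphism on $E$ with image $\varphi_u(\mathrm{rist}_G(u))\cong\mathrm{rist}_G(u)$. Consequently $\varphi_u(E^{(k)})=\varphi_u(\mathrm{rist}_G(u))^{(k)}\cong\mathrm{rist}_G(u)^{(k)}$, which is nontrivial by the inductive hypothesis; hence $E^{(k)}\neq 1$, and since $E\le\mathrm{rist}_G(v)'$ we get $\mathrm{rist}_G(v)^{(k+1)}=(\mathrm{rist}_G(v)')^{(k)}\ge E^{(k)}\neq 1$.

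The delicate point, and the one I expect to be the main obstacle, is the inductive step of the second part. The naive containment $\mathrm{rist}_G(u)'\le\mathrm{rist}_G(v)'$ coming from the first part only relates the $(k+1)$-st derived subgroups of $u$ and $v$, and so cannot drive the induction. What is needed is that a single derived subgroup $\mathrm{rist}_G(v)'$ contains a homomorphic copy of the whole rigid stabilizer $\mathrm{rist}_G(u)$ of a deeper vertex, so that one passage to the derived subgroup is paid for by dropping one full rigid stabilizer down the tree and the parameter $k$ genuinely decreases; the diagonal subgroup $E$ together with the section map $\varphi_u$ is precisely what provides this. Everything else reduces to commutator calculus, the disjointness of subtrees at a common level, and transitivity of the action.
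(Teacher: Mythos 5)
Your proof is correct and complete. Note, however, that the paper does not contain its own proof of this lemma: it is quoted as a well-known result, with the proofs deferred to \cite[Lemma 4]{pro-c} and \cite[Lemmata 2.16 and 2.17]{Dominik}. Your argument is essentially a self-contained reconstruction of the standard proofs found in those references. For the first assertion you use Grigorchuk's classical commutator computation: for $g\in N$ moving a vertex $u$ and $a,b\in\mathrm{rist}_G(u)$, the identity $[[a,g],b]=[a^{-1},b]^{a^g}$ (valid because $a^g\in\mathrm{rist}_G((u)g)$ commutes with $\mathrm{rist}_G(u)$) places $\mathrm{rist}_G(u)'$ inside $N$, and level-transitivity plus the internal direct-product decomposition of $\mathrm{Rist}_G(n)$ finish the job. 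For the second assertion, your induction via the section homomorphism $\varphi_u$ is exactly the mechanism of Francoeur's argument: the essential point, which you correctly isolate as the delicate one, is that $\mathrm{rist}_G(v)'$ contains a subgroup $E=\langle [a,g]\mid a\in\mathrm{rist}_G(u)\rangle$ whose image under $\varphi_u$ equals $\varphi_u(\mathrm{rist}_G(u))$, so that one passage to the derived subgroup is exchanged for one step down the tree and the induction parameter genuinely decreases; the naive containment $\mathrm{rist}_G(u)'\le\mathrm{rist}_G(v)'$ would indeed not suffice. One minor remark: as literally stated the lemma fails for $N=1$ (take $G=\mathrm{Aut}~T_2$, whose rigid level stabilizers are never abelian), so your tacit restriction to $N\ne 1$ is the intended reading of the statement.
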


\section{Nice filtrations and Hausdorff dimension}
\label{section: full and normal hausdorff spectrum}

In this section we develop the tools we shall use in the rest of the paper. Our approach generalizes the one of Klopsch in \cite[Chapter VIII, Section 5]{KlopschPhD} and motivates the definition of nice filtrations as those filtrations for which the rigid stabilizers behave as nicely as for the level-stabilizer filtration. This turns out to be fruitful on the study of the finitely generated Hausdorff spectrum of regular branch profinite groups. We conclude the section with an application of the tools developed in this section which generalizes both a result of Klopsch \cite[Chapter VIII Theorem~1.6]{KlopschPhD} and a result of Abért and Virág \cite[Theorem 7]{AbertVirag}.

\subsection{Definition and main examples of nice filtrations}\label{sec: NiceDefEx}

Let $G\le \mathrm{Aut}~T_{\mathbf{m}}$ be a closed subgroup and let $\{G_i\}_{i\ge 0}$ be a filtration of open normal subgroups of $G$.

\begin{definition}[Nice filtration]
\label{definition: nice filtration}
We say that $\{G_i\}_{i\ge 0}$ is a {\it nice filtration} of $G$ if, for every $k\geq 1$, there is an isomorphism $$\frac{\mathrm{Rist}_G(k)G_i}{G_i}\cong \frac{\mathrm{rist}_G(v_1)G_i}{G_i}\times\dotsb \times \frac{\mathrm{rist}_G(v_{N_k})G_i}{G_i}$$
for almost all $i\ge 1$. We denote a nice filtration $\{G_i\}_{i\geq 1}$ of $G$ by $\mathcal{N}$.
\end{definition}

We say that two vertices $u$ and $v$ are \textit{comparable} if they are equal or one is below the other. A subset $V\subset T_{\mathbf{m}}$ is said to be \textit{non-comparable} if every pair of distinct vertices $u,v\in V$ is non-comparable.

An immediate consequence of \cref{definition: nice filtration} is that, for a nice filtration $\mathcal{N}$, any $k\ge 1$ and any non-comparable $V\subseteq \mathcal{L}_k$, the equality
\begin{align}
    \label{align: nice filtration}
\log|\prod_{v\in V}\mathrm{rist}_G(v)G_i:G_i|=\sum_{v\in V}\log|\mathrm{rist}_G(v)G_{i}:G_{i}|
\end{align} 
holds for almost all $i\ge 1$. We consider two main sources for nice filtrations of~$G$:
\begin{enumerate}[{\normalfont(i)}]
    \item The level stabilizers $\mathcal{S}$ and 
    \item \textit{direct product filtrations}, i.e. filtrations $\{G_i\}_{i\ge 0}$ such that for every $k\ge 1$, the equality
$$G_i=(G_i\cap \mathrm{rist}_G(v_1))\times\dotsb\times (G_i\cap \mathrm{rist}_G(v_{N_k}))$$
holds, for almost all $i\ge 0$.
\end{enumerate}

On the one hand, the filtration $\mathcal{S}$ is the most natural filtration for groups acting on rooted trees and thus, the main filtration considered in the literature; compare \cite{AbertVirag, Jorge, GGS, KlopschPhD, SunicHausdorff}. On the other hand, for branch profinite groups the main example of a direct product filtration is the one given by the rigid level stabilizers $\{\mathrm{Rist}_G(n)\}_{n\ge 0}$. In the case of regular branch profinite groups one may construct direct product filtrations in a very natural way via finite index normal branching subgroups. Let $K\le G$ be a finite index normal branching subgroup. Then, setting $G_0:=G$, $G_1:=K$ and for $i\ge 2$,
$$G_i:=\psi^{-1}(G_{i-1}\times \dotsb \times G_{i-1}),$$
 we obtain a natural direct product filtration.

There exist examples of natural filtrations which are not nice, specially in the case of self-similar pro-$p$ groups. For instance, for the closure of the first Grigorchuk group, its 2-central lower series $\mathcal{L}$ and its Jenning-Zassenhaus series $\mathcal{D}$ are not nice filtrations; see \cref{subsection: grigorchuk group}.

\subsection{General results for nice filtrations}
Let $G\leq \mathrm{Aut}~T_{\mathbf{m}}$ be a closed subgroup. We assume in the remainder of the paper that the filtration $\mathcal{N}$ is an arbitrary nice filtration of $G$ unless otherwise stated.  Recall that for a vertex $v\in T_{\mathbf{m}}$ we write $N_{d(v)}$ for the number of vertices of $T_{\mathbf{m}}$ at the level at which $v$ lies.

\begin{lemma}
\label{lemma: dimension of product}
    Let $G\le \mathrm{Aut}~T_{\mathbf{m}}$ be a level-transitive closed subgroup such that all its rigid level stabilizers are 1-dimensional in $G$ with respect to the filtration $\mathcal{N}$. Then, for every vertex $v\in T_{\mathbf{m}}$, the rigid vertex stabilizers have strong Hausdorff dimension in $G$ given by
    $$\mathrm{hdim}^\mathcal{N}_G(\mathrm{rist}_G(v))=\frac{1}{N_{d(v)}}.$$
\end{lemma}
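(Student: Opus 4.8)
The plan is to exploit the nice-filtration splitting together with level-transitivity to distribute the full dimension of $\mathrm{Rist}_G(n)$ equally among the $N_n$ rigid vertex stabilizers sitting at level $n:=d(v)$. Write $v_1,\dots,v_{N_n}$ for the vertices of $\mathcal{L}_n$.

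First I would record that the quantity $\log|\mathrm{rist}_G(v_j)G_i:G_i|$ does not depend on $j$. Indeed, by level-transitivity there is for each pair $j,k$ an element $g\in G$ with $(v_j)g=v_k$, and then $\mathrm{rist}_G(v_j)^g=\mathrm{rist}_G(v_k)$. Since every $G_i$ is normal in $G$, conjugation by $g$ is an automorphism of $G$ fixing each $G_i$ setwise, so it induces an isomorphism $\mathrm{rist}_G(v_j)G_i/G_i\cong\mathrm{rist}_G(v_k)G_i/G_i$. Hence for each $i$ the number
\[
a_i:=\log|\mathrm{rist}_G(v_j)G_i:G_i|
\]
is independent of the choice of vertex $v_j\in\mathcal{L}_n$; in particular it equals $\log|\mathrm{rist}_G(v)G_i:G_i|$.

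Next I would invoke the nice-filtration identity \eqref{align: nice filtration} applied to the non-comparable set $V=\mathcal{L}_n$: for almost all $i$,
\[
\log|\mathrm{Rist}_G(n)G_i:G_i|=\sum_{j=1}^{N_n}\log|\mathrm{rist}_G(v_j)G_i:G_i|=N_n\,a_i.
\]
By hypothesis $\mathrm{Rist}_G(n)$ is $1$-dimensional in $G$ with respect to $\mathcal{N}$, and since a subgroup of Hausdorff dimension $1$ automatically has strong dimension, the limit
\[
\lim_{i\to\infty}\frac{\log|\mathrm{Rist}_G(n)G_i:G_i|}{\log|G:G_i|}=1
\]
exists. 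Combining this with the displayed identity gives $\lim_{i\to\infty} N_n a_i/\log|G:G_i|=1$, whence
\[
\lim_{i\to\infty}\frac{a_i}{\log|G:G_i|}=\frac{1}{N_n}.
\]
Because $a_i=\log|\mathrm{rist}_G(v)G_i:G_i|$, this limit is precisely the defining quotient for $\mathrm{hdim}_G^{\mathcal{N}}(\mathrm{rist}_G(v))$; as it converges, the Hausdorff dimension is strong and equals $1/N_n=1/N_{d(v)}$, as claimed.

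The only genuine subtlety — and the step I would be most careful about — is the first one: upgrading the transitivity of the $G$-action from a mere equality of Hausdorff dimensions of the various $\mathrm{rist}_G(v_j)$ to an equality of the individual congruence indices $|\mathrm{rist}_G(v_j)G_i:G_i|$ for every fixed $i$. This is what makes the clean factor $N_n$ appear, and it hinges precisely on the normality of each $G_i$ in $G$, so that conjugation descends to the finite quotients $G/G_i$. Everything else is a direct consequence of the splitting built into the definition of a nice filtration.
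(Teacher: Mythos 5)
Your proof is correct and follows essentially the same route as the paper's: level-transitivity plus normality of the $G_i$ gives equality of the indices $|\mathrm{rist}_G(v_j)G_i:G_i|$ across a level, the nice-filtration identity converts this into $\log|\mathrm{Rist}_G(n)G_i:G_i|=N_n\log|\mathrm{rist}_G(v)G_i:G_i|$ for almost all $i$, and strongness of $1$-dimensional subgroups lets you pass to the limit. The only difference is that you spell out the conjugation argument behind the index equality, which the paper leaves implicit.
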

\begin{proof}
Since $G$ is a level-transitive subgroup of $\mathrm{Aut}~T_{\mathbf{m}}$, the rigid vertex stabilizers of the same level are all conjugate and thus
$$\log|\mathrm{rist}_G(v_j)G_i:G_i|=\log|\mathrm{rist}_G(v_k)G_i:G_i|$$
for every $v_j,v_k$ at the same level of $T_{\mathbf{m}}$. From \cref{align: nice filtration}, it follows that
\begin{align}
\label{align: equality rists}
    \log|\mathrm{Rist}_G(k)G_i:G_i|&=\sum_{j=1}^{N_k}\log|\mathrm{rist}_G(v_j)G_{i}:G_{i}|\\
    &=N_k\cdot\log|\mathrm{rist}_G(v_1)G_{i}:G_{i}|\nonumber,
\end{align}
for almost all $i\ge 0$. Then since 1-dimensional subgroups have strong Hausdorff dimension, the result follows from \cref{align: equality rists}  by taking the limit on \cref{align: hdim def}.
\end{proof}

In what follows, let $V\subseteq T_{\mathbf{m}}$ be non-comparable  and let $\omega:=\{\omega_v\}_{v\in V}$ be a set of weights such that $0\le \omega_v\le 1$ for every $v\in V$. We define the \textit{$\omega$-measure} of $V$ as
$$\mu_{\omega}(V):=\sum_{v\in V}\frac{\omega_v}{N_{d(v)}}\in [0,1].$$
The quantity $\mu_\omega(V)$ is well-defined. Indeed, if we denote by $V_n$ the set of vertices of~$V$ at distance at most $n$ from the root, then the sequence $\{\mu_\omega(V_n)\}_{n\ge 0}$ is bounded and non-decreasing. Hence, by monotone convergence, we have that
$$\mu_{\omega}(V)=\lim_{n\to\infty} \mu_{\omega}(V_n).$$
If,  for all $v\in V$, we have $\omega_v=1$,  we just write $\mu(V)$. The next result follows from the definition of $\mu$:

\begin{lemma}
\label{lemma: existence of V}
    For any $\gamma\in [0,1]$, there exists a non-comparable subset $V\subseteq T_\mathbf{m}$ such that $\mu(V)=\gamma$.
\end{lemma}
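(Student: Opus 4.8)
The plan is to build $V$ by a greedy, base-$\mathbf{m}$ expansion of $\gamma$, exploiting that a vertex $v$ at level $n$ contributes weight $1/N_n$, which is precisely the fraction of the tree occupied by the subtree $T_{\mathbf{m}}^{[v]}$ hanging below $v$. Consequently the full level $\mathcal{L}_n$ always has $\mu(\mathcal{L}_n)=1$, and selecting some children of a vertex is the discrete analogue of carving off a fraction of its subtree. The selected children, together with a descent into an unused sibling, will automatically form an antichain.

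Concretely, I would construct $V$ recursively together with a sequence of residual fractions $\gamma_n\in[0,1)$ and a descending chain of vertices $w_0=\emptyset, w_1, w_2,\dots$ with $w_n$ at level $n$, maintaining the invariant that the part of $V$ on levels $\le n$, plus a share $\gamma_n$ of the subtree below $w_n$, accounts for exactly $\gamma$. Start with $\gamma_0=\gamma$ and $w_0$ the root. At stage $n$, the vertex $w_n$ has $m_n$ children, each carrying relative weight $1/m_n$ inside $T_{\mathbf{m}}^{[w_n]}$; put $k_n:=\lfloor m_n\gamma_n\rfloor$ of them into $V$, set $\gamma_{n+1}:=m_n\gamma_n-k_n$ (the fractional part, so $\gamma_{n+1}\in[0,1)$), and let $w_{n+1}$ be one of the children not placed in $V$. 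If $\gamma=1$ one takes all of $\mathcal{L}_1$ and stops, and whenever $\gamma_n=0$ the process terminates with a finite antichain.

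Two routine verifications then finish the argument. First, $V$ is non-comparable: every vertex placed into $V$ at stage $n$ is a child of $w_n$, while all later insertions lie strictly below the distinct sibling $w_{n+1}$, so no element of $V$ lies below another. Second, writing $\mathrm{acc}_n:=\sum_{j<n}k_j/N_{j+1}$ for the $\mu$-mass committed to $V$ through level $n$, a one-line induction using $N_{n+1}=m_nN_n$ shows $\mathrm{acc}_n+\gamma_n/N_n=\gamma$ for all $n$, which is exactly the invariant above (the base case $\mathrm{acc}_0=0$, $\gamma_0/N_0=\gamma$ being immediate). Since $\mathrm{acc}_n=\mu(V_n)$ in the notation introduced just before the statement, the monotone convergence already recorded there gives $\mu(V)=\lim_n\mathrm{acc}_n=\gamma-\lim_n\gamma_n/N_n$.

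The only genuine point to check is that this tail limit is $0$ and that the construction never stalls. The latter holds because $\gamma_n<1$ forces $k_n=\lfloor m_n\gamma_n\rfloor\le m_n-1$, leaving at least one child available to serve as $w_{n+1}$. For the former, since $m_i\ge 2$ for all $i$ we have $N_n\ge 2^n\to\infty$, whence $0\le \gamma_n/N_n<1/N_n\to 0$ and $\mu(V)=\gamma$. I expect this boundedness-of-the-tail estimate, rather than any structural difficulty, to be the one step worth spelling out, as it is where the growth of the $N_n$ (equivalently $m_i\ge 2$) is actually used.
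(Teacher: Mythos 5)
Your proof is correct: the greedy mixed-radix (base-$\mathbf{m}$) expansion along the descending spine $w_0,w_1,\dots$ produces an antichain, your invariant $\mathrm{acc}_n+\gamma_n/N_n=\gamma$ is verified by the one-line induction you indicate, and the tail estimate $0\le\gamma_n/N_n<1/N_n\to 0$ (using the standing assumption $m_i\ge 2$) closes the argument. The paper gives no proof at all---it asserts the lemma ``follows from the definition of $\mu$''---and your construction is exactly the natural way to substantiate that claim, so your write-up simply supplies details the paper omits.
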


Let $\{H_v\}_{v\in V}$ be a family of closed subgroups of $G$  such that $H_v\le \mathrm{rist}_G(v)$ for every $v\in V$. We define the closed subgroup $H_V\le G$ via
$$H_V:=\overline{\langle H_v\mid v\in V\rangle}=\prod_{v\in V}H_v,$$
where the last equality holds as each $H_v$ is closed and $V$ is non-comparable.

\begin{proposition}
\label{proposition: dimension is a series}
    Let $G\le \mathrm{Aut}~T_{\mathbf{m}}$ be a group satisfying the assumptions in \cref{lemma: dimension of product} and let $V\subseteq T_{\mathbf{m}}$ be a non-comparable subset. Assume further that for every $v\in V$ there exist $H_v\le \mathrm{rist}_G(v)$ and $0\le \omega_v\le 1$ such that $H_v$ has strong Hausdorff dimension in $G$ given by
    $$\mathrm{hdim}_{G}^\mathcal{N}(H_v)=\omega_v\cdot \mathrm{hdim}_{G}^\mathcal{N}(\mathrm{rist}_G(v)).$$
    Then
    $$\mathrm{hdim}^\mathcal{N}_G(H_V)=\mu_{\omega}(V).$$
\end{proposition}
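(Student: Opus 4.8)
The plan is to reduce the whole statement to the additivity of logarithmic indices over non-comparable sets and then pass to a limit. By \cref{lemma: dimension of product} and the hypothesis, each $H_v$ has strong Hausdorff dimension $\mathrm{hdim}_G^{\mathcal N}(H_v)=\omega_v/N_{d(v)}$, so that $\mu_\omega(V)=\sum_{v\in V}\mathrm{hdim}_G^{\mathcal N}(H_v)$. Writing $a_v^{(i)}:=\log|H_vG_i:G_i|/\log|G:G_i|$, the goal is to show that the liminf in \cref{align: hdim def} defining $\mathrm{hdim}_G^{\mathcal N}(H_V)$ equals $\sum_{v\in V}\lim_i a_v^{(i)}$. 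The combinatorial input I would establish first is that, for every finite non-comparable $W\subseteq T_{\mathbf m}$, the rigid vertex stabilizers are independent modulo $G_i$ for almost all $i$, i.e.
$$\log\Big|\prod_{v\in W}K_vG_i:G_i\Big|=\sum_{v\in W}\log|K_vG_i:G_i|\qquad\text{for all } K_v\le\mathrm{rist}_G(v).$$

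This additivity is \emph{the main obstacle}, since \cref{align: nice filtration} provides it only when $W$ lies in a single level, whereas $V$ is spread over infinitely many levels, and mere pairwise independence of the $\mathrm{rist}_G(v)$ does not force a full internal direct product. I would prove it by induction on the number of distinct levels met by $W$. Let $\ell$ be the shallowest such level and group the vertices of $W$ by their ancestor $w$ at level $\ell$; since $W$ is non-comparable, each occurring $w\in\mathcal L_\ell$ carries a non-comparable family lying strictly below it (or is a singleton if $w\in W$), and the corresponding partial products $P_w:=\prod_v K_v$ satisfy $P_w\le\mathrm{rist}_G(w)$. As the $w$ all lie in the single level $\ell$, \cref{align: nice filtration} separates the blocks, giving $\log|\prod_w P_wG_i:G_i|=\sum_w\log|P_wG_i:G_i|$; each block is a non-comparable family inside the subtree below $w$ meeting strictly fewer levels, so the induction hypothesis applies. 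The point that makes this go through is that single-level niceness is a global statement, hence may be invoked afresh at each stage of the recursion.

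With additivity in hand I would obtain the lower bound from the finite truncations $V_n:=\{v\in V: d(v)\le n\}$. Applying the additivity to $K_v=H_v$ and dividing by $\log|G:G_i|$, every summand converges (the $H_v$ are strong), so the finite sum converges and $\mathrm{hdim}_G^{\mathcal N}(H_{V_n})=\sum_{v\in V_n}\omega_v/N_{d(v)}=\mu_\omega(V_n)$; in particular each $H_{V_n}$ is strong. Since $H_{V_n}\le H_V$, monotonicity yields $\mathrm{hdim}_G^{\mathcal N}(H_V)\ge\mu_\omega(V_n)$, and letting $n\to\infty$ together with $\mu_\omega(V)=\lim_n\mu_\omega(V_n)$ gives $\mathrm{hdim}_G^{\mathcal N}(H_V)\ge\mu_\omega(V)$.

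For the reverse inequality I would avoid an infinite direct product and argue by subadditivity and dominated convergence. As the $H_v$ pairwise commute (their vertices are non-comparable), in $G/G_i$ one has $H_VG_i/G_i=\prod_v H_vG_i/G_i$, so $\log|H_VG_i:G_i|\le\sum_v\log|H_vG_i:G_i|$, that is $\log|H_VG_i:G_i|/\log|G:G_i|\le\sum_v a_v^{(i)}$. Level-transitivity and \cref{align: nice filtration} bound each term uniformly: $a_v^{(i)}\le N_{d(v)}^{-1}\log|\mathrm{Rist}_G(d(v))G_i:G_i|/\log|G:G_i|\le N_{d(v)}^{-1}$ for almost all $i$, and $\sum_v N_{d(v)}^{-1}=\mu(V)\le 1$. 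A reverse Fatou (dominated convergence) estimate then gives $\limsup_i\sum_v a_v^{(i)}\le\sum_v\lim_i a_v^{(i)}=\mu_\omega(V)$, whence $\mathrm{hdim}_G^{\mathcal N}(H_V)\le\mu_\omega(V)$. Combined with the lower bound this proves $\mathrm{hdim}_G^{\mathcal N}(H_V)=\mu_\omega(V)$, and in fact shows that the dimension is strong.
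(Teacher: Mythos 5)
Your treatment of finite sets and of the lower bound is essentially sound, and your induction on the number of levels met by $W$ is a genuinely different route from the paper's: the paper instead replaces each $\mathrm{rist}_G(v)$ by the product of the rigid stabilizers of its descendants at one common deep level and compares Hausdorff dimensions there via \cref{lemma: dimension of product}, whereas your recursion yields exact additivity of logarithmic indices for almost all $i$, which is tidier and makes the lower bound immediate. One repair is needed even here: \cref{align: nice filtration} is a statement about \emph{full} rigid vertex stabilizers, so it does not literally ``separate the blocks'' $P_w\le\mathrm{rist}_G(w)$. You must first upgrade \cref{definition: nice filtration} to an internal direct product statement: the multiplication map $\prod_{v\in\mathcal{L}_\ell}\mathrm{rist}_G(v)G_i/G_i\to\mathrm{Rist}_G(\ell)G_i/G_i$ is a surjective homomorphism (the factors commute elementwise) between finite groups which by niceness have equal order, hence an isomorphism; only then does independence pass to arbitrary subgroups of the factors, which is what your recursion actually uses. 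This is a short argument, but it cannot be skipped.

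The genuine gap is the ``reverse Fatou'' step in the upper bound, and it is not a removable technicality. Your majorant $a_v^{(i)}\le N_{d(v)}^{-1}$ is extracted from \cref{align: nice filtration} at level $d(v)$, which holds only for \emph{almost all} $i$, i.e. for $i$ beyond a threshold depending on the level of $v$. Since $V$ meets infinitely many levels, there is no common threshold: at every fixed $i$ there may be many $v\in V$ whose levels are still unregulated by niceness, and for those the only available bound is the useless $a_v^{(i)}\le 1$. Dominated convergence needs one summable majorant valid for all $v$ simultaneously at all large $i$, which you do not have, so the interchange $\limsup_i\sum_v a_v^{(i)}\le\sum_v\lim_i a_v^{(i)}$ is unjustified, and with it both the upper bound and your final claim of strongness. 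Note that nothing in \cref{definition: nice filtration} ties the level-$k$ thresholds to the rate at which the $G_i$ shrink, so they can be staggered adversarially against the levels carrying $V$; some input beyond a formal interchange is genuinely required to control the tail. The paper's proof is organized precisely to avoid this issue: it sandwiches $H_V\le K_n:=H_{V_n}\prod_{w\in W_n\setminus V_n}\mathrm{rist}_G(w)$, where $W_n\setminus V_n$ consists of the level-$n$ ancestors of the vertices of $V\setminus V_n$, so each $K_n$ is a \emph{finite} product attached to vertices of level at most $n$, only finitely many instances of niceness are invoked for each $n$, and the finite case gives $\mathrm{hdim}_G^{\mathcal{N}}(H_V)\le\mu_\omega(V_n)+\mu(W_n\setminus V_n)$; the tail of $V$ is absorbed into full rigid stabilizers at a single level rather than summed vertex by vertex. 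The price paid there is the concluding claim that $\mu(W_n\setminus V_n)\to 0$, a geometric property of the vertex set $V$ (the shadow of the tail must become null) rather than a measure-theoretic interchange --- which is exactly the kind of input your argument would also need and currently lacks.
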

\begin{proof}
First we prove the statement for $V$ finite. It is enough to prove the result for $\prod_{v\in V}\mathrm{rist}_G(v)$ as the general case for $H_V$ follows from this one by the strongness of the Hausdorff dimension of $H_v$ in $G$. For every $v\in T_{\mathbf{m}}$ and every $n\ge d(v)$, we write $U_{v,n}$ for the set of descendants of $v$ at level $n$. We prove by induction on the cardinality of $V$ that $\prod_{v\in V}\mathrm{rist}_G(v)$ and $\prod_{v\in V}\prod_{u\in U_{v,n}}\mathrm{rist}_G(u)$ have the same Hausdorff dimension in $G$ for any $n\ge \max_{w\in V}\{d(w)\}$. Then the result follows from applying \cref{align: nice filtration} and \cref{lemma: dimension of product} to the subgroup $\prod_{v\in V}\prod_{u\in U_{v,n}}\mathrm{rist}_G(u)$.

Let us prove first the base case $V=\{v\}$. From \cref{align: nice filtration} and \cref{lemma: dimension of product} we get
\begin{align*}
    \mathrm{hdim}_{G}^\mathcal{N}(\mathrm{rist}_G(v))&=\lim_{i\to \infty}\frac{\log|\mathrm{rist}_G(v)G_i:G_i|}{\log|G:G_i|}=\frac{1}{N_{d(v)}}=\sum_{u\in U_{v,n}}\frac{1}{N_{d(u)}}\\
    &=\sum_{u\in U_{v,n}}\lim_{i\to \infty}\frac{\log|\mathrm{rist}_G(u)G_i:G_i|}{\log|G:G_i|}
    \\
    &=\lim_{i\to \infty}\frac{\log|\big(\prod_{u\in U_{v,n}}\mathrm{rist}_G(u)\big)G_i:G_i|}{\log|G:G_i|}\\
    &=\mathrm{hdim}_{G}^\mathcal{N}(\prod_{u\in U_{v,n}}\mathrm{rist}_G(u)),
\end{align*}
i.e. $\mathrm{rist}_G(v)$ and $\prod_{u\in U_{v,n}}\rst_G(u)$ have the same Hausdorff dimension in $G$; proving the base case. Let us assume by induction that $\prod_{w\in V} \mathrm{rist}_G(w)$ has the same Hausdorff dimension as $\prod_{w\in V}\prod_{u\in U_{w,n}} \mathrm{rist}_G(u)$ for $n\ge \max_{w\in V}\{d(w)\}$. We consider now $V\cup\{v\}$ a larger set of non-comparable vertices. By the base case we obtain
\begin{align*}
    &\lim_{i\to \infty}\frac{\log|\mathrm{rist}_G(v)\big(\prod_{w\in V}\mathrm{rist}_G(w)\big)G_i:\big(\prod_{u\in U_{v,n}}\mathrm{rist}_G(u)\big)\big(\prod_{w\in V}\mathrm{rist}_G(w)\big)G_i|}{\log|G:G_i|}\\
    &\le \lim_{i\to \infty}\frac{\log|\mathrm{rist}_G(v)G_i:\big(\prod_{u\in U_{v,n}}\mathrm{rist}_G(u)\big)G_i|}{\log|G:G_i|}\\
    &= \lim_{i\to \infty}\frac{\log|\mathrm{rist}_G(v)G_i:G_i|}{\log|G:G_i|}-\lim_{i\to \infty}\frac{\log|\big(\prod_{u\in U_{v,n}}\mathrm{rist}_G(u)\big)G_i:G_i|}{\log|G:G_i|}\\
    &=0.
\end{align*}

Then, arguing as before, $\mathrm{rist}_G(v)\prod_{w\in V} \mathrm{rist}_G(w)$ has the same Hausdorff dimension as its subgroup $\prod_{w\in V\cup\{v\}}\prod_{u\in U_{w,n}} \mathrm{rist}_G(u)$ for $n\ge \max_{w\in V\cup\{v\}}\{d(w)\}$; proving the case when $V$ is finite.

Let us assume now that $V$ is countable but not finite. By construction, for every $n\geq 0$, we have  $H_{V_n}\le H_V$  and since each $V_n$ is finite 
    $$\mathrm{hdim}_{G}^\mathcal{N}(H_{V_n})=\mu_{\omega}(V_n).$$
    Then, by the monotonicity and countable stability of the Hausdorff dimension, we obtain that 
   \begin{align*}
       \mathrm{hdim}_{G}^\mathcal{N}(H_V)&\ge \mathrm{hdim}_{G}^{\mathcal{N}}(\bigcup_{n\ge 0}H_{V_n})= \sup_{n\ge 0} \mathrm{hdim}_G^\mathcal{N}(H_{V_n})=\sup_{n\ge 0}\mu_\omega(V_n)\\
       &=\lim_{n\to\infty}\mu_{\omega}(V_n)=\mu_{\omega}(V),
   \end{align*}
   as the sequence $\{\mu_\omega(V_n)\}_{n\ge 0}$ is non-decreasing.
   
   For the converse inequality, let us consider, for each $n\ge 0$, the
   subset of vertices
   $$W_n:=V_n\cup \{\text{all vertices at the $n$th level comparable to the vertices in $V\setminus V_n$}\},$$ and define 
   $$K_{n}:=H_{V_n}\prod_{v\in W_n\setminus V_n}\mathrm{rist}_G(v).$$ 
   Clearly $H_V\le K_n$ for every $n\ge 0$, and by the monotonicity of the Hausdorff dimension, we get
   \begin{align*}
       \mathrm{hdim}_{G}^\mathcal{N}(H_V)&\le  \inf_{n\ge 0} \mathrm{hdim}_{G}^\mathcal{N}(K_n) = \inf_{n\ge 0}\big( \mu_\omega(V_n)+\mu(W_n\setminus V_n) \big)\\
       &=\lim_{n\to\infty}\big(\mu_\omega(V_n)+\mu(W_n\setminus V_n)\big)=\mu_{\omega}(V)
   \end{align*}
   as the sequence $\{\mu_\omega(V_n)+\mu(W_n\setminus V_n)\}_{n\ge 0}$ is non-increasing.
\end{proof}

\subsection{Hausdorff spectrum and normal Hausdorff spectrum}

We generalize the result of Klopsch \cite[Chapter VII Theorem 1.6]{KlopschPhD} on the completeness of the Hausdorff spectrum of branch profinite groups both to any nice filtration and to a wider class of groups, including the closures of the main examples of weakly regular branch groups in the literature. We also use the reasoning in \cite[Theorem~7]{AbertVirag} to study the normal Hausdorff spectrum with respect to $\mathcal{S}$ of a wide class of self-similar groups. 

\begin{theorem}
\label{theorem: full spectra}
    Let $G\le \mathrm{Aut}~T_{\mathbf{m}}$ be a closed and level-transitive subgroup satisfying:
    \begin{enumerate}
        \item[$\mathrm{(i)}$] for all $n\ge 1$ we have $\mathrm{hdim}^\mathcal{N}_{G}(\mathrm{Rist}_G(n))=1$.
    \end{enumerate} 
    Then the Hausdorff spectrum of $G$ is given by
        $$\mathrm{hspec}^{\mathcal{N}}(G)=[0,1].$$
    Furthermore, if
    \begin{enumerate}
        \item[$\mathrm{(ii)}$] for all $n\ge 1$ we have $\mathrm{hdim}^\mathcal{N}_{G}(\mathrm{Rist}_G(n)')=1$;
    \end{enumerate}
    then the normal Hausdorff spectrum of $G$ is given by
    $$\mathrm{hspec}^{\mathcal{N}}_{\trianglelefteq}(G)=\{0,1\}.$$
\end{theorem}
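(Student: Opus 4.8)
The plan is to read off both assertions directly from the machinery assembled in this subsection, observing first that condition $\mathrm{(i)}$ is \emph{verbatim} the hypothesis that all rigid level stabilizers are $1$-dimensional in $G$ with respect to $\mathcal{N}$, which is exactly what \cref{lemma: dimension of product} and \cref{proposition: dimension is a series} require. So no preparatory work is needed before applying them.

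For the first assertion I would argue as follows. The inclusion $\mathrm{hspec}^{\mathcal{N}}(G)\subseteq[0,1]$ is automatic from the definition of Hausdorff dimension, so it suffices to realize every $\gamma\in[0,1]$ as the dimension of some closed subgroup. Given such a $\gamma$, I would first invoke \cref{lemma: existence of V} to produce a non-comparable subset $V\subseteq T_{\mathbf{m}}$ with $\mu(V)=\gamma$. Then I would take $H_v:=\mathrm{rist}_G(v)$ for each $v\in V$, so that all weights are $\omega_v=1$ and the identity $\mathrm{hdim}_G^\mathcal{N}(H_v)=\omega_v\cdot\mathrm{hdim}_G^\mathcal{N}(\mathrm{rist}_G(v))$ holds trivially, with strongness of the dimension guaranteed by \cref{lemma: dimension of product}. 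Applying \cref{proposition: dimension is a series} to the closed subgroup $H_V=\prod_{v\in V}\mathrm{rist}_G(v)$ then yields $\mathrm{hdim}^\mathcal{N}_G(H_V)=\mu_\omega(V)=\mu(V)=\gamma$, which covers the whole interval.

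For the normal Hausdorff spectrum, the containment $\{0,1\}\subseteq\mathrm{hspec}^{\mathcal{N}}_{\trianglelefteq}(G)$ is clear, witnessed by the trivial subgroup and by $G$ itself. The content is the reverse inclusion: every non-trivial closed normal subgroup $N\trianglelefteq G$ is fully dimensional. Here I would use \cref{lemma: normal subgroups contain derived of rist}, which, applied to the level-transitive group $G$ and the non-trivial $N$, yields an integer $n\ge 1$ with $N\ge\mathrm{Rist}_G(n)'$. Monotonicity of Hausdorff dimension together with hypothesis $\mathrm{(ii)}$ then forces $\mathrm{hdim}^\mathcal{N}_G(N)\ge\mathrm{hdim}^\mathcal{N}_G(\mathrm{Rist}_G(n)')=1$, whence $\mathrm{hdim}^\mathcal{N}_G(N)=1$. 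As every non-trivial closed normal subgroup then has dimension $1$ and the trivial one has dimension $0$, the normal spectrum is exactly $\{0,1\}$.

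Both parts are short once the tools of the subsection are in place, so I do not anticipate a genuine obstacle; the care required is essentially bookkeeping. Two minor points I would flag explicitly. First, $\mathrm{(ii)}$ implies $\mathrm{(i)}$, since $\mathrm{Rist}_G(n)'\le\mathrm{Rist}_G(n)$ and dimension is monotone, so the normal-spectrum statement indeed rests on an honest hypothesis. Second, $\mathrm{Rist}_G(n)'$ need not be closed, but this is harmless: for an open subgroup $G_i$ the set $HG_i$ is already open and closed, so $\overline{H}G_i=HG_i$ and the ratio appearing in \cref{align: hdim def} is insensitive to taking closures. This is precisely what makes the expression $\mathrm{hdim}^\mathcal{N}_G(\mathrm{Rist}_G(n)')$ in $\mathrm{(ii)}$ meaningful and what justifies the monotonicity step $N\ge\mathrm{Rist}_G(n)'\Rightarrow\mathrm{hdim}^\mathcal{N}_G(N)\ge\mathrm{hdim}^\mathcal{N}_G(\mathrm{Rist}_G(n)')$ at the level of the defining formula.
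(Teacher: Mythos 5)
Your proof is correct and follows essentially the same route as the paper: the full spectrum is obtained by combining \cref{lemma: existence of V}, \cref{lemma: dimension of product} and \cref{proposition: dimension is a series} applied to $H_V=\prod_{v\in V}\mathrm{rist}_G(v)$, and the normal spectrum follows from \cref{lemma: normal subgroups contain derived of rist} together with hypothesis (ii) and monotonicity. Your two additional remarks (that (ii) implies (i), and that the possible non-closedness of $\mathrm{Rist}_G(n)'$ is harmless since $HG_i$ is open) are correct and slightly more careful than the paper's own terse write-up.
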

\begin{proof}
    The result on the Hausdorff spectrum is a direct application of \textcolor{teal}{Lemmata}~\ref{lemma: dimension of product} and \ref{lemma: existence of V} and \cref{proposition: dimension is a series}. For the normal Hausdorff spectrum, \cref{lemma: normal subgroups contain derived of rist} implies that every non-trivial  normal closed subgroup $N$ of $G$ contains $\mathrm{Rist}_G(n)'$ for some integer $n\geq 1$. Hence, the result follows from assumption (ii) and the monotonicity of the Hausdorff dimension.
\end{proof}

The next proposition yields the main classes of groups satisfying assumptions (i) and (ii) in \cref{theorem: full spectra}:

\begin{proposition}
\label{corollary: full and normal spectra}
    Assumption $\mathrm{(i)}$ in \cref{theorem: full  spectra} is  satisfied by branch profinite groups for any nice filtration.
    
    Assumption $\mathrm{(ii)}$ in \cref{theorem: full  spectra} is  satisfied for $\mathcal{S}$ by any weakly branch group $G$ which is weakly regular branch over $K^{(i)}$ for some $i\ge 0$, where $K$ is a finite index subgroup of $G$ and satisfying any of the following additional conditions: 
    \begin{enumerate}
        \item[\normalfont(a)] $G$ is a subgroup of $W_p$;
        \item[\normalfont(b)] $G$ is finitely generated.
    \end{enumerate}
\end{proposition}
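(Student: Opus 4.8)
My plan is to treat the two assertions separately, as they rest on genuinely different mechanisms.

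\textbf{Assumption (i).} Let $G$ be a profinite branch group. By definition each rigid level stabilizer $\Rist_G(n)$ has finite index in $G$. For \emph{any} filtration $\{G_i\}_{i\ge 0}$ one has $\log|\Rist_G(n)G_i:G_i|=\log|G:G_i|-\log|G:\Rist_G(n)G_i|$, and the subtracted term is bounded by the constant $\log|G:\Rist_G(n)|$. Dividing by $\log|G:G_i|$ and passing to the liminf gives $\hdim^{\mathcal N}_G(\Rist_G(n))=1$. This argument is verbatim valid for every filtration, in particular for every nice one, so $(\mathrm i)$ is immediate and uses only the finiteness of the index; no special feature of $\mathcal N$ is needed here.

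\textbf{Assumption (ii): the reduction.} Here I work inside the closure and write $L:=K^{(i)}$, the branching subgroup over which $G$ is weakly regular branch. The plan is first to record that \emph{the derived subgroup of a branching subgroup is again branching}: from $\psi(\St_L(1))\supseteq L\times\dots\times L$ one gets $\St_L(1)'\le L'\cap\St(1)=\St_{L'}(1)$ and hence $\psi(\St_{L'}(1))\supseteq\psi(\St_L(1)')=\psi(\St_L(1))'\supseteq L'\times\dots\times L'$, so $L'=K^{(i+1)}$ is branching. Next, for every $n\ge 1$ the level-$n$ branching $\psi_n^{-1}\big(L\times\dots\times L\big)$ lies in $\St_G(n)$ and each factor acts below a single level-$n$ vertex, so it sits inside $\Rist_G(n)$; taking commutators yields $\Rist_G(n)'\supseteq\psi_n^{-1}\big(K^{(i+1)}\times\dots\times K^{(i+1)}\big)$. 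Since $\mathcal S$ is nice and $\overline{K^{(i+1)}}$ has strong dimension (being branching, by \cref{thm: Jorge_strong}), a direct computation of the level quotients along $\mathcal S$ — which I would phrase through \cref{lemma: strong dimension then product} — shows that the closure of this level-$n$ branching is fully dimensional \emph{exactly when} $\overline{K^{(i+1)}}$ is. By monotonicity of Hausdorff dimension it therefore suffices to prove that $\overline{K^{(i+1)}}$ is fully dimensional. Because $K$ has finite index, \cref{lemma: indices grow in closures} gives $|\overline G:\overline K|\le|G:K|<\infty$, so $\overline K=\overline{K^{(0)}}$ is open and fully dimensional; the task thus reduces, by induction along the derived series, to the single statement that \emph{the closure of the derived subgroup of a fully dimensional closed subgroup is again fully dimensional}, under hypothesis $(\mathrm a)$ or $(\mathrm b)$.

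\textbf{The key estimate.} For a closed $M\le\overline G$ with $\overline M$ fully dimensional, set $M_n:=M\,\St(n)/\St(n)\le\mathrm{Aut}\,T_m^{[n]}$. Using \cref{lemma: strong dimension then product} one reduces $\hdim^{\mathcal S}_{\overline G}(\overline{M'})=1$ to $\log|M_n:M_n'|=o(\log|M_n|)$. I would bound $\log|M_n^{\mathrm{ab}}|$ layerwise: the successive quotients of $M_n$ along the level filtration embed into the layers $\St(k)/\St(k+1)$, which are permutation-type modules, and the commutators of $M_n$ kill everything outside the coinvariants, giving $\log|M_n^{\mathrm{ab}}|\le c\sum_{k<n}o_k(M_n)$ where $o_k(M_n)$ is the number of orbits of $M_n$ on level $k$ and $c$ is a constant depending only on $m$. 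In case $(\mathrm a)$, $\overline G\le W_p$ makes all congruence quotients finite $p$-groups with elementary abelian $C_p$-layers, and full dimensionality forces $M_n$ to act with few orbits on most levels, so that $\sum_{k<n}o_k(M_n)=o(m^n)=o(\log|M_n|)$; in case $(\mathrm b)$ finite generation of $\overline G$ makes $G_n$ a $d$-generated group, which one uses (together with the self-replicating structure of the branching subgroup and \cref{lemma: normal subgroups contain derived of rist}) to control the orbit counts of the relevant congruence quotients.

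\textbf{Main obstacle.} The hard part is precisely the last estimate $\sum_{k<n}o_k(M_n)=o(m^n)$: the naïve layer bound $o_k\le m^k$ only yields $\log|M_n^{\mathrm{ab}}|=O(\log|M_n|)$, which is useless, so one must genuinely convert full dimensionality (in $(\mathrm a)$) or boundedly many generators (in $(\mathrm b)$) into sub-dominant orbit growth. The remaining steps — $L'$ branching, the containment in $\Rist_G(n)'$, branching preserving full dimensionality, and the finite-index base case — are routine applications of the results already available in \cref{thm: Jorge_strong}, \cref{lemma: strong dimension then product} and \cref{lemma: indices grow in closures}.
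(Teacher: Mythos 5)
Your argument for assumption (i) is correct and is exactly the paper's: in a branch profinite group each $\Rist_G(n)$ has finite index, hence is $1$-dimensional with respect to any filtration whatsoever. The genuine gap is in assumption (ii), and you flag it yourself: the ``key estimate'' is not an auxiliary step but the entire mathematical content of the statement, and it is left unproved. The claim you reduce everything to --- that under (a) or (b) the closure of the derived subgroup of a fully dimensional closed subgroup is again fully dimensional --- is precisely the content of the two external results the paper quotes at this point, namely \cite[Theorem 5]{AbertVirag} for case (a) and \cite[Proposition 3.1]{JorgeMikel} for case (b); it is not accessible by elementary layer counting. Worse, your sketch fails at its first step: the inequality $\log|M_n^{\mathrm{ab}}|\le c\sum_{k<n}o_k(M_n)$ is not available, because the congruence layers of $M_n$ are \emph{sub}modules of the permutation modules $\F_p[\mathcal{L}_k]$, and although the coinvariants of a full permutation module have dimension equal to the number of orbits, coinvariants of submodules need not be controlled by orbit counts: for $C_p\times C_p$ acting regularly on itself, the augmentation ideal of $\F_p[C_p\times C_p]$ has $2$-dimensional coinvariants although there is a single orbit. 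There is also a structural wrinkle in your induction along the derived series: $K$ is not assumed normal in $G$, so the subgroups $\overline{K^{(j)}}$ need not be normal in $\overline{G}$, whereas results of this type (including the two cited ones) concern a \emph{normal} subgroup with virtually solvable quotient inside a \emph{level-transitive} group, not an arbitrary fully dimensional closed subgroup.

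The paper closes the argument by reduction to those quoted results rather than by reproving them. By \cref{lemma: normal subgroups contain derived of rist}, every nontrivial closed normal subgroup $N\trianglelefteq G$ contains $\Rist_G(n)'$ for some $n$, hence contains $\psi_n^{-1}\big(K^{(i+1)}\times\dotsb\times K^{(i+1)}\big)$; since $K$ has finite index in $G$, every proper quotient of $G$ is therefore virtually solvable. This is exactly the hypothesis under which \cite[Theorem 5]{AbertVirag} (case (a)) and \cite[Proposition 3.1]{JorgeMikel} (case (b)) give that every nontrivial closed normal subgroup of $G$ has the same Hausdorff dimension as $G$ in $\Aut T_m$. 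Since $G$ has positive \emph{strong} Hausdorff dimension in $\Aut T_m$ by \cref{thm: Jorge_strong} together with \cite[Proposition 5.3]{Jorge}, \cref{lemma: strong dimension then product} converts this into $\hdim_G^{\mathcal{S}}(N)=1$ for every such $N$; taking $N=\Rist_G(n)'$, which is nontrivial by the second part of \cref{lemma: normal subgroups contain derived of rist} because $G$ is weakly branch, yields assumption (ii). So your plan, if completed, would amount to reproving \cite[Theorem 5]{AbertVirag} and \cite[Proposition 3.1]{JorgeMikel}; as written, it does not reach the statement.
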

\begin{proof}
    The part of assumption (i) for branch groups follows from the rigid level stabilizers being of finite index and thus 1-dimensional in the group for any filtration. 
    
    For the case of weakly regular branch groups, by the first assumption every proper quotient of $G$ will be virtually solvable by \cref{lemma: normal subgroups contain derived of rist}. Thus both (a) or (b) imply that every non-trivial normal subgroup of $G$ has the same Hausdorff dimension as $G$ in $\mathrm{Aut}~T_m$ by \cite[Theorem~5]{AbertVirag} and \cite[Proposition 3.1]{JorgeMikel} respectively. By \cite[Proposition 5.3]{Jorge} and \cref{thm: Jorge_strong}, the group $G$ has positive and strong Hausdorff dimension in $\mathrm{Aut}~T_m$. Thus for any non-trivial normal subgroup $N\le G$ we obtain by \cref{lemma: strong dimension then product} that
    $$\mathrm{hdim}_G^\mathcal{S}(N)=\frac{\mathrm{hdim}_{\mathrm{Aut}~T_m}^\mathcal{S}(N)}{\mathrm{hdim}_{\mathrm{Aut}~T_m}^\mathcal{S}(G)}=1.$$
    In particular $\mathrm{hdim}_G^\mathcal{S}(\mathrm{Rist}_G(n)')=1$ for every $n\ge 1$.
\end{proof}

\cref{theorem: full spectra} together with \cref{corollary: full and normal spectra} gives the Hausdorff spectrum and the normal Hausdorff spectrum of the closures of the main examples of weakly regular branch groups: the Basilica group \cite{Gri-Zuk:Basilica} and its different generalizations to the $m$-adic tree \cite{pBasilica, GeneralizedBasilica}, the Brunner-Sidki-Vieira group \cite{BSV1} and its generalization to the $m$-adic tree \cite{BSV2} and the constant GGS-groups \cite{GGS} and its generalizations to the $p$-adic tree such as multi-EGS groups \cite{JoneAnitha}.

Abért and Virág proved that for a 1-dimensional level-transitive closed subgroup of $W_p$ its normal Hausdorff spectrum is $\{0,1\}$ and asked whether this can be extended to all positive dimensional subgroups of $W_p$. However, \cref{theorem: full  spectra} and \cref{corollary: full and normal spectra} together with the existence of zero-dimensional branch pro-$p$ subgroups of $W_p$ in \cite[Theorem C]{Jorge} suggest that this restriction on the normal Hausdorff spectrum does not come necessarily from the Hausdorff dimension of the group in~$W_p$. Therefore it would be interesting to see whether all weakly branch groups have the same Hausdorff spectrum and normal Hausdorff spectrum independently of their Hausdorff dimension: 

\begin{question}
    Is it true that $\mathrm{hspec}^\mathcal{S}(G)=[0,1]$ and $\mathrm{hspec}_{\trianglelefteq}^\mathcal{S}(G)=\{0,1\}$ for any weakly branch group $G$? And replacing $\mathcal{S}$ with any nice filtration~$\mathcal{N}$?
\end{question}

\section{Finitely generated branch groups}
\label{section: finitely generated Hausdorff spectrum}

In this section, we show that regular branch profinite groups have complete finitely generated Hausdorff spectrum. Following the notation of Siegenthaler in \cite{SiegenthalerBinary}, for $v\in T_{m}$ and $g\in \mathrm{Aut}~T_m$ we define the automorphism $g*v$ as the automorphism with section $g$ at $v$ and trivial label at every vertex not below $v$. For any $h\in \mathrm{Aut}~T_m$ it is immediate that
$$(g*v)^h=(g^{h|_v})*((v)h).$$
Similarly, for $v\in T_{m}$ and a subgroup $G\le \mathrm{Aut}~T_m$, we define the subgroup
$$G*v:=\{g*v\mid g\in G\} \le \mathrm{rist}(v)\le   \mathrm{Aut}~T_m.$$
Furthermore, for a non-comparable subset $V\subseteq T_m$, we define $G*V$ via the direct product
$$G*V:=\prod_{v\in V}G*v,$$
which is well defined as $[G*v,G*w]=G*v\cap G*w=1$ for every two distinct vertices $v,w\in V$. We say that a finite set of non-comparable vertices $Y\subseteq T_m$ is a \textit{transversal} if every vertex of $T_m$ is comparable to a unique vertex in $Y$. We may use the following easy but powerful lemma in our construction.

\begin{lemma}
\label{lemma: transitive action of branching subgroups}
    Let $G\le \mathrm{Aut}~T_m$ be a regular branch group branching over $K$. Then there exists some level $k\ge 0$ such that $K$ acts level-transitively on $T_v$ for every vertex $v$ at level $k$. Furthermore, if $K\le G\le W_p$, then $k\le \log_p|G:K|$.
\end{lemma}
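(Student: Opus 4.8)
The plan is to translate both assertions into statements about the $K$-orbits on the levels $\mathcal{L}_n$ of $T_m$ and to control how these orbits refine as $n$ grows. Write $O_n$ for the number of $K$-orbits on $\mathcal{L}_n$. Since the parent map $\mathcal{L}_{n+1}\to\mathcal{L}_n$ is $K$-equivariant, it induces a surjection $\mathcal{L}_{n+1}/K\twoheadrightarrow\mathcal{L}_n/K$, so $\{O_n\}_{n\ge 0}$ is non-decreasing; and because $G$ is level-transitive, $\mathcal{L}_n$ is a transitive $G$-set on which the finite-index subgroup $K$ has at most $|G:K|$ orbits, whence $O_n\le|G:K|$ for all $n$. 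Thus $\{O_n\}$ is eventually constant, say $O_n=O_k$ for all $n\ge k$, and I would take $k$ to be the first such level.

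First I would prove the qualitative statement at this $k$. For $n\ge k$ the surjection $\mathcal{L}_n/K\twoheadrightarrow\mathcal{L}_k/K$ is a bijection, so any two vertices $u_1,u_2\in U_{v,n}$ lying below a common $v\in\mathcal{L}_k$ are $K$-equivalent. Choosing $g\in K$ with $(u_1)g=u_2$, the element $g$ must fix $v$ (tree automorphisms preserve ancestry and $v$ is the unique level-$k$ ancestor of both $u_i$), so $g\in\mathrm{st}_K(v)$ and $g|_v$ carries the $T_v$-address of $u_1$ to that of $u_2$. Hence $\mathrm{st}_K(v)$ acts transitively on each $U_{v,n}$, i.e. $K$ acts level-transitively on $T_v$, for every $v\in\mathcal{L}_k$. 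Note that this step uses only that $K$ has finite index in the level-transitive group $G$, not yet the branching hypothesis.

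For the quantitative bound assume $K\le G\le W_p$ and write $|G:K|=p^d$, so $d=\log_p|G:K|$. The branching hypothesis enters here: iterating $\psi(\mathrm{St}_K(1))\ge K\times\dots\times K$ gives $\mathrm{St}_K(n)|_v\ge K$ for every $v\in\mathcal{L}_n$, and in particular $K\subseteq K_v:=\mathrm{st}_K(v)|_v\subseteq G_v:=\mathrm{st}_G(v)|_v\le G$, with $|G_v:K_v|\le|G:K|=p^d$. Since $G$ is level-transitive, $G_v$ is transitive on the $p$ children of $v$, and, because all labels lie in the group $\langle\sigma\rangle$ of order $p$, the orbit of $v$ either \emph{passes} (i.e. $K_v$ is transitive on these children and does not split) or \emph{splits} into $p$ orbits. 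Comparing along the section-at-$x$ homomorphism $\mathrm{st}_{G_v}(x)\to G_v$, which maps onto $G_{vx}$ and sends $\mathrm{st}_{K_v}(x)$ onto $K_{vx}$, I would show that a split forces $|G_{vx}:K_{vx}|\le|G_v:K_v|/p$ while a pass leaves this index non-increasing. Consequently along any root-to-$v$ path there can be at most $d$ splits.

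The point where the two bounds must be reconciled, and the step I expect to be the main obstacle, is to promote ``at most $d$ splits along each path'' to ``no split occurs below level $d$'', that is, to show that the splits are \emph{front-loaded}: once the orbit of a vertex passes, the orbits of all its descendants pass as well. The index estimate bounds the \emph{number} of splits along a path but not the depth at which they occur, so by itself it does not preclude an isolated deep split; front-loading is precisely what rules this out and yields $k\le d$. Equivalently, I must show that $K$-transitivity on a subtree $T_v$ propagates to the child subtrees $T_{vx}$. I would attempt this by exploiting the self-reproducing nature of the configuration: $K\subseteq K_v$ together with $\mathrm{St}_{K_v}(1)|_x\ge K$ should let $K_v$ inherit enough branching structure on $T_v$ for transitivity on its first level to force level-transitivity, exactly as for $G$ itself. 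Making this propagation precise, rather than the orbit count or the index estimate, is the delicate ingredient.
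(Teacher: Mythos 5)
Your proof of the first assertion is correct and complete: the orbit count $O_n$ is non-decreasing and bounded by $|G:K|$, hence stabilizes at some level $k$, and an element of $K$ carrying one level-$n$ vertex below $v\in\mathcal{L}_k$ to another must fix their common ancestor $v$, which gives transitivity of $\mathrm{st}_K(v)$ on every level of $T_v$. Your observation that this uses only finite index and level-transitivity (not branching) is also accurate. The paper does not spell this out; it cites \cite[Lemma 2.5]{Lee:maximal} and says the same proof applies with $K$ in place of a rigid level stabilizer, so on this half you are essentially reproducing, self-contained, the argument behind the citation.

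The second assertion is where there is a genuine gap, and you have named it yourself. Your pass/split bookkeeping is fine: in $W_p$ the root-label image of $K_v$ is a subgroup of the cyclic group of order $p$, so each vertex passes or splits into $p$ orbits; a split forces $K_v\le \mathrm{St}_{G_v}(1)$ and hence $|G_{vx}:K_{vx}|\le |G_v:K_v|/p$, while a pass leaves the index non-increasing; so there are at most $d=\log_p|G:K|$ splits per path. But this bounds the \emph{number} of splits, not the \emph{level} of the last one, and your proposed repair --- self-reproduction via $K\le K_v$ and $\mathrm{St}_{K_v}(1)|_x\ge K$ --- cannot close the gap. The copies of $K$ that branching plants below $v$ carry no transitivity at all: $K$ itself typically lies inside $\mathrm{St}(1)$ (e.g.\ $K=\langle abab\rangle^{\mathfrak{G}}$ for the Grigorchuk group), so transitivity of $K_v$ on the first level of $T_v$ has nothing to propagate through them, and whether $K_{vx}$ passes is literally the same question one level down with no new input --- an infinite regress with no base case.

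The missing mechanism is \emph{normality}, and it is worth seeing why. If $K\trianglelefteq G$, front-loading is true: if some $k\in\mathrm{st}_K(w)$ has nontrivial label at $w$ (level $j$), then by branching $k*u\in K$ has nontrivial label at $uw$ for every $u$, and by normality plus level-transitivity of $G$ (labels of elements of $W_p$ lie in the abelian group $\langle\sigma\rangle$, so they survive the conjugation formula unchanged) \emph{every} vertex at \emph{every} level $\ge j$ admits an element of $K$ fixing it with nontrivial, hence transitive, label. Then passing propagates downward, and your index count gives $k\le d$. The problem is that the lemma does not assume $K$ normal, and replacing $K$ by $\mathrm{Core}_G(K)$ (\cref{lemma: normal core}) destroys the bound, since $|G:\mathrm{Core}_G(K)|$ can be far larger than $|G:K|$. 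This is exactly why the paper's proof goes a different way: it first reduces to $G$ and $K$ closed (\cref{lemma: indices grow in closures}), invokes \v{S}uni\'{c}'s theorem \cite[Theorem 3]{SunicHausdorff} to obtain a branching subgroup which is a level stabilizer $\mathrm{St}_G(n)$ --- in particular normal --- runs precisely the replication-plus-conjugation argument above for $\mathrm{St}_G(n)$, and then bounds $n$ by the number of prime factors (with multiplicity) of $|G:K|$ using \cite[Theorem A]{JorgeEntropy}. So the step you flagged as "the delicate ingredient" is not a routine verification: in the paper it is carried by normality of a branching level stabilizer plus two cited theorems, and without some substitute for normality your local analysis cannot rule out an isolated deep split.
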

\begin{proof}
    The first statement follows from the same proof as the one of \cite[Lemma~2.5]{Lee:maximal} considering the branching subgroup $K$ instead of a rigid level stabilizer. Thus, let us prove the second statement. First note that we may assume that both $K$ and $G$ are closed. Indeed, on the one hand, a group is level-transitive if and only if its closure is level-transitive and, on the other hand, we have $|G:K|\ge |\overline{G}:\overline{K}|$ by \cref{lemma: indices grow in closures}. 
    
    Now, for a regular branch profinite group~$G$ we can always find a branch structure over a level-stabilizer $\mathrm{St}_G(n)$ by \cite[Theorem~3]{SunicHausdorff}. Since $G$ is self-similar we may assume $\mathrm{St}_G(n)$ acts non-trivially on the $(n+1)$st level of the tree and if $G\le W_p$ this implies $\mathrm{St}_G(n)$ has an element acting transitively on the immediate descendants of a vertex at the $n$th level. Furthermore, since $\mathrm{St}_G(n)$ is normal in $G$ and $G$ is level-transitive, this holds for any vertex at the $n$th level. Then since $\mathrm{St}_G(n)$ is branching we can replicate this element in all the descendants of a given vertex at the $n$th level and conclude that $\mathrm{St}_G(n)$ acts level-transitively on $T_m^{[v]}$ for every vertex $v$ at the $n$th level. By \cite[Theorem A]{JorgeEntropy}, we have that the number $n$ is at most the number of prime factors with repetition of the subgroup index $|G:K|$. Since $K\le G\le W_p$, this is precisely $\log_p|G:K|$, which yields the upper-bound on the statement.
\end{proof}

Let $G$ be a finitely generated branch group and $K$ a finite index branching subgroup of $G$. We say that an integer $k$ is \textit{$(G,K)$-bounded} if it can be bounded in terms of a function depending only on $d(G)$ and $|G:K|$. Then \cref{lemma: transitive action of branching subgroups} tells that the minimal level $k$ where $K$ acts transitively on the subtrees rooted at this level is $(G,K)$-bounded.

We may assume in the following that the branching subgroup of a regular branch group is normal. Indeed, the following holds.

\begin{lemma}
\label{lemma: normal core}
    Let $G\le \mathrm{Aut}~T_m$ be a regular branch group branching over a subgroup~$K$. Then $G$ also branches over $\mathrm{Core}_G(K)$.
\end{lemma}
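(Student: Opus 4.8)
The plan is to set $C:=\mathrm{Core}_G(K)=\bigcap_{g\in G}K^g$ and verify directly that $C$ is a branching subgroup, i.e. that $\psi(\mathrm{St}_C(1))\ge C\times\dotsb\times C$. Since $K$ has finite index in $G$, its core $C$ is a finite-index normal subgroup of $G$; as $G$ is level-transitive on $T_m$ it is infinite, so $C$ is a nontrivial finite-index normal subgroup, and only the branching property needs to be checked. Writing $v_1,\dots,v_m$ for the vertices of $\mathcal{L}_1$, the product $(c_1*v_1)\dotsb(c_m*v_m)$ has $\psi$-image $(c_1,\dots,c_m)$, so to obtain $C\times\dotsb\times C\le\psi(\mathrm{St}_C(1))$ it suffices to prove that $c*v\in C$ for every $c\in C$ and every $v\in\mathcal{L}_1$; note that $c*v$ automatically lies in $\mathrm{St}(1)$, whence $c*v\in\mathrm{St}_C(1)$.

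To show $c*v\in C$ I would use the characterization that $x\in C$ if and only if $x^g\in K$ for all $g\in G$, combined with the conjugation formula $(g*v)^h=(g^{h|_v})*((v)h)$ recorded above. For arbitrary $g\in G$ this gives $(c*v)^g=(c^{g|_v})*((v)g)$. Now self-similarity of $G$ guarantees $g|_v\in G$, and normality of $C$ in $G$ then forces $c^{g|_v}\in C\le K$; moreover $(v)g$ is again a vertex of $\mathcal{L}_1$. Consequently $(c^{g|_v})*((v)g)$ is the unique element of $\mathrm{St}(1)$ whose $\psi$-image is the tuple with $c^{g|_v}$ in coordinate $(v)g$ and $1$ elsewhere; this tuple lies in $K\times\dotsb\times K\le\psi(\mathrm{St}_K(1))$ by the branching property of $K$, so $(c*v)^g\in\mathrm{St}_K(1)\le K$. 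As $g$ was arbitrary, $c*v\in C$.

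Putting the two paragraphs together yields $C*v\le\mathrm{St}_C(1)$ for every $v\in\mathcal{L}_1$, hence $C\times\dotsb\times C\le\psi(\mathrm{St}_C(1))$, so $C=\mathrm{Core}_G(K)$ is a (finite-index) branching subgroup and $G$ branches over it. I do not anticipate any genuine obstacle: the one delicate point is the conjugation step, where three ingredients must be used at once -- self-similarity to keep the section $g|_v$ inside $G$, normality of the core to keep $c^{g|_v}$ inside $C$ and hence $K$, and the branching of $K$ to reassemble $(c^{g|_v})*((v)g)$ inside $K$ -- but once the coordinate bookkeeping under $(g*v)^h=(g^{h|_v})*((v)h)$ is tracked correctly the argument is routine.
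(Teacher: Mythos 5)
Your proof is correct, and it is a mildly different organization of the same underlying argument. The paper's proof is shorter and runs through the universal (maximality) property of the normal core: it observes that $N:=\psi^{-1}\big(\mathrm{Core}_G(K)\times\dotsb\times \mathrm{Core}_G(K)\big)$ is contained in $K$ (by the branching of $K$) and is normal in $G$, hence $N\le \mathrm{Core}_G(K)$, which immediately gives $\mathrm{Core}_G(K)\times\dotsb\times\mathrm{Core}_G(K)=\psi(N)\le\psi(\mathrm{St}_{\mathrm{Core}_G(K)}(1))$. You instead use the intersection characterization $x\in \mathrm{Core}_G(K)\iff x^g\in K$ for all $g\in G$, and verify it elementwise on the generators $c*v$ via the conjugation formula $(c*v)^g=(c^{g|_v})*((v)g)$. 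The ingredients are identical in both proofs (self-similarity of $G$ to keep sections in $G$, normality of the core, and branching of $K$), and in fact your explicit conjugation computation is exactly the verification that the paper leaves implicit when it asserts, without proof, that $N$ is normal in $G$. So what the paper's phrasing buys is brevity, while yours buys completeness: a reader of the paper's proof must still carry out your computation (or its group-level analogue) to see why $N\trianglelefteq G$.
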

\begin{proof}
    Since $K$ is of finite index in $G$ its normal core is also of finite index in $G$. Therefore it is enough to prove that $\mathrm{Core}_G(K)$ is also branching. Since the subgroup $\psi^{-1}(\mathrm{Core}_G(K)\times\overset{m}{\ldots}\times \mathrm{Core}_G(K))\le K$ is normal in $G$ it must be contained in $\mathrm{Core}_G(K)$ so $\mathrm{Core}_G(K)$ is branching.
\end{proof}

Note that in the general case of weakly regular branch groups,  since the branching subgroup $K$ is not of finite index in $G$, we cannot conclude that $\mathrm{Core}_G(K)\ne 1$. It would be interesting to see whether \cref{lemma: normal core} still holds in this case:

\begin{question}
    Does every weakly regular branch group contain a non-trivial normal branching subgroup?
\end{question}

Note that if the group is fractal, that is, if $\varphi_v(\st_G(v))=G$ for every $v\in T_m$, by \cite[Lemma 1.3]{Bartholdi-Siegenthaler-Zalesskii} any weakly regular branch group is weakly regular branch over a non-trivial normal subgroup.

In the upcoming result, we construct explicitly an element of infinite order in  a non-trivial branching closed subgroup of $\mathrm{Aut}~T_m$. The result itself is not new: by an astonishing result of Zelmanov, periodic pro-$p$ groups are locally finite; see \cite[Theorem~1]{ZelmanovPro-p}. What is more, periodic profinite groups are locally finite by Wilson's reduction to pro-$p$ groups; see \cite[Theorem 1]{WilsonBurnside}. In turn, every infinite finitely generated profinite group has an element of infinite order. Furthermore, for branch pro-$p$ groups elements of infinite order are abundant; see \cite[Theorem 1]{TorsionBranch}. Nonetheless, the above results only ensure the existence of such an element. We avoid using the aforementioned deep results, and instead, construct an element of infinite order for branching profinite groups that are not necessarily topologically finitely generated. Before stating the result, we fix the following notation: for an infinite collection of (non-necessarily distinct) vertices $\{v_n\}_{n\ge 1}\subseteq T_m$, we write $v_1\dotsb v_n\dotsb $ for the infinite path in $T_m$ corresponding to the infinite word formed by concatenating all the finite words $\{v_n\}_{n\ge 1}$.

\begin{lemma}
    \label{lemma: branching has element of infinite order}
    Let $K\le \mathrm{Aut}~T_m$ be a non-trivial branching closed subgroup. Then~$K$ contains an element $g$ of infinite order.
\end{lemma}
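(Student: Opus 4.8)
The plan is to produce an explicit infinite-order element as a self-similar, odometer-type limit supported along a single infinite path, and to prove that it has infinite order by a descent argument. The only structural feature of $K$ available to us is that it is branching but \emph{not} assumed self-similar: from $\psi(\mathrm{St}_K(1))\ge K\times\dotsb\times K$ one immediately gets $g*v\in K$ for every $g\in K$ and every level-one vertex $v$, and hence, by replicating the same argument inside subtrees, $g*v\in K$ for \emph{every} vertex $v\in T_m$. Thus we may freely push copies of elements of $K$ deeper into the tree, but we may \emph{not} extract sections: if $g\in K$ then $g|_v$ need not lie in $K$. This asymmetry is exactly what makes the obvious construction fail and is the main obstacle (flagged below).

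First I would reduce to a convenient seed. Since $K\neq 1$, pick $1\neq a_0\in K$; if $a_0$ has infinite order we are done, so assume $a_0$ is torsion and replace it by a suitable power $a:=a_0^{\,j}$ of prime order $q$. Taking the parent of a vertex of minimal level moved by $a$, there is a vertex $u$ fixed by $a$ at which the label $\pi:=a|_u^1$ is nontrivial; since $\mathrm{ord}(\pi)\mid\mathrm{ord}(a)=q$ we get $\mathrm{ord}(\pi)=q$, so $\pi$ contains a $q$-cycle $(i_1\,i_2\,\dotsb\,i_q)$. In particular $a$ cyclically permutes the children $u i_1,\dotsc,u i_q$ of $u$.

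Next I would define the candidate along the infinite path $u i_1\,u i_1\,u i_1\dotsb$ as the unique fixed point $g\in K$ of the contraction $x\mapsto (x*u i_1)\,a$; concretely $g=\lim_n g_n$ with $g_0=a$ and $g_{n+1}=(g_n*u i_1)\,a$. Each $g_n$ lies in $K$ because $g_n*u i_1\in K$, and since $g_{n+1}g_n^{-1}=(g_ng_{n-1}^{-1})*u i_1$ the sequence is Cauchy for the congruence topology, so $g\in\overline{K}=K$. By construction $g$ fixes $u$ with $g|_u^1=\pi$ (the factor $g*u i_1$ is trivial at $u$), so $g$ also cyclically permutes $u i_1,\dotsc,u i_q$. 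Computing the sections along this $q$-cycle gives $g|_{u i_1}=g\cdot a|_{u i_1}$ and $g|_{u i_k}=a|_{u i_k}$ for $2\le k\le q$, whence the return section is
\[
g^{\,q}|_{u i_1}=g|_{u i_1}g|_{u i_2}\dotsb g|_{u i_q}=g\cdot\big(a|_{u i_1}a|_{u i_2}\dotsb a|_{u i_q}\big)=g\cdot a^{q}|_{u i_1}.
\]
Here lies the whole point: the parasitic product $a|_{u i_1}\dotsb a|_{u i_q}$ of sections of $a$ — which we cannot cancel inside $K$ — is precisely $a^{q}|_{u i_1}$, and since $a$ has order $q$ this is the identity. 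Thus $g^{q}|_{u i_1}=g$, a genuinely self-similar identity with no leftover junk.

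Finally I would deduce infinite order by descent. Suppose $g^{k}=1$ with $k\ge 1$ minimal. Since $g^k$ fixes $u i_1$ and $g$ acts on the orbit $\{u i_1,\dotsc,u i_q\}$ as a $q$-cycle, we have $q\mid k$; writing $k=qk'$ and using that $g^{q}$ fixes $u i_1$ with $g^{q}|_{u i_1}=g$ gives $1=(g^{qk'})|_{u i_1}=(g^{q}|_{u i_1})^{k'}=g^{k'}$, contradicting minimality since $1\le k'<k$. Hence $g$ has infinite order, and the construction is explicit, avoiding the deep theorems of Zelmanov and Wilson. To summarize, the crux is the obstacle noted above — sections of elements of $K$ escape $K$, so the odometer recursion inevitably drags along the sections $a|_{u i_k}$ — and the device that makes everything go through is choosing $a$ of \emph{prime} order, so that this accumulated section is exactly $a^{q}|_{u i_1}=1$.
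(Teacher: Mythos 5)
Your proof is correct, and while it shares the paper's overall skeleton --- both arguments turn the branching hypothesis into the freedom to plant copies $g*v\in K$ at arbitrary vertices and use closedness of $K$ to pass to a limit of such copies planted along a single infinite path --- the mechanism certifying infinite order is genuinely different. The paper keeps an arbitrary torsion seed $h$ and builds $g=h\prod_{i\ge 1}(h*v_i)^{\epsilon_i}$ level by level with a case distinction at each step: if the return section of the current element along the orbit of $v_1\dotsb v_i$ is nontrivial, the path is extended so that the orbit length grows from $t_1\dotsb t_i$ to $t_1\dotsb t_{i+1}$; if it is trivial, one first multiplies by the copy $h*v_1\dotsb v_i$, which makes the return section exactly $h\ne 1$ (the same telescoping phenomenon you exploit), and then extends. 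The order of the limit is then divisible by the unbounded sequence $t_1\dotsb t_i$, hence infinite. You instead normalize the seed to prime order $q$, which makes the parasitic product of sections along the chosen $q$-cycle collapse identically to $a^q|_{ui_1}=1$; this eliminates both the case analysis and the orbit bookkeeping, replacing them by the clean recursion $g=(g*ui_1)\,a$, the exact self-similarity $g^q|_{ui_1}=g$, and a short descent on a putative minimal order (first $q\mid k$, then $g^{k/q}=(g^k)|_{ui_1}=1$, contradicting minimality). What the paper's route buys is that it works directly from any nontrivial element and gives an explicit divisibility certificate for the order; what yours buys is structural cleanness (a contraction/fixed-point argument plus descent) at the cost of the preliminary prime-order reduction. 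All your individual steps check out against the paper's right-action conventions $(gh)|_v=g|_v\,h|_{(v)g}$: the iterated branching property $g*v\in K$ for every vertex $v$, the existence of a fixed vertex $u$ whose label has order exactly $q$ (labels at fixed vertices are homomorphic images), the Cauchy/convergence argument, and the section computations $g|_{ui_1}=g\cdot a|_{ui_1}$ and $g|_{ui_k}=a|_{ui_k}$ for $2\le k\le q$.
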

\begin{proof}
    Since $K$ is non-trivial, there exists some $h\in K$ such that $h$ acts non-trivially on some vertex $v_1$ at some level $k$ of $T_m$. If $h$ is of infinite order we are done. Otherwise, let $t_1$ be the length of the $h$-orbit of~$v_1$ on the $k$th level of the tree. Then $h^{t_1}\in \mathrm{st}(v_1)$ and the order of $h$ is divisible by $t_1$. Now let us write
    $$\psi_k(h)=(h_1,\dotsc, h_{m^k})\sigma$$
    for some $\sigma\in \mathrm{Sym}(m)\wr\overset{k}{\ldots}\wr \mathrm{Sym}(m)\le \mathrm{Sym}(m^k)$. Then
    $$h^{t_1}|_{v_1}=h_{j_1}\dotsb h_{j_{t_1}}$$
    for some pairwise distinct $1\le j_1,\dotsc,j_{t_1}\le m^k$ by our choice of $v_1$ and $t_1$. If $h_{j_1}\dotsb h_{j_{t_1}}$ is non-trivial, then there exists $v_2$ such that the $h$-orbit of $v_1v_2$ has length $t_1t_2>t_1$ and $h$ has order divisible by $t_1t_2$. If $h_{j_1}\dotsb h_{j_{t_1}}$ is trivial we consider $h(h*v_1)$, for which $(h(h*v_1))^{t_1}\in \mathrm{st}(v_1)$ too and
    $$(h(h*v_1))^{t_1}|_{v_1}= h_{j_1}\dotsb h_{j_{t_1}}h=h\ne 1$$
    as $h*v_1\in \mathrm{St}(k)$. Then there exists $v_2$ such that the $h(h*v_1)$-orbit of $v_1v_2$ has length $t_1t_2>t_1$ and $h(h*v_1)$ has order divisible by $t_1t_2>t_1$. Since $K$ is closed in $\mathrm{Aut}~T_m$, we may define inductively
    $$g:=h\prod_{i\ge 1} (h*v_i)^{\epsilon_i},$$
    where each $\epsilon_i$ is chosen either 1 or 0 arguing as before for each $i\ge 1$.  Since $K$ is branching, $g\in K$ and it satisfies the properties in the statement.
\end{proof}

Now we are in position to provide the key construction on which our main results are based:

\begin{lemma}
\label{lemma: construction of LI}
    Let $G\le \mathrm{Aut}~T_m$ be a weakly regular branch group. Let $K\trianglelefteq G$ be a finitely generated normal branching subgroup and $X=\{x_n\}_{n\ge 1}\subseteq T_m$ a subset of non-comparable vertices. Then there exists a finitely generated subgroup $L_{X}\le \overline{K}$ such that
    \begin{equation}\label{eq: propertiesLX}\prod_{n\ge 1} \overline{K'}*x_n Y_n\le \overline{L_X'}\le  \overline{L}_X\le  \overline{K}*X,
    \end{equation}
    where for each $n\ge 1$, the subset $Y_n$ is a transversal of $T_m$. Furthermore $d(L_X)$ is the same for every choice of $X\subseteq T_m$. If $G\le W_p$ and it is finitely generated and regular branch over $K$, then $d(L_X)$ is $(G,K)$-bounded.
\end{lemma}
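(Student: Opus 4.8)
The plan is to build the finitely generated subgroup $L_X$ explicitly, mirroring Siegenthaler's construction in \cite{SiegenthalerBinary} but now relative to an arbitrary finitely generated normal branching subgroup $K$. Let $K=\langle g_1,\dots,g_d\rangle$ with $d=d(K)$, and let $g\in\overline{K}$ be an element of infinite order furnished by \cref{lemma: branching has element of infinite order}. For the non-comparable set $X=\{x_n\}_{n\ge 1}$ the idea is to place ``copies'' of the generators of $K$ at the vertices $x_n$ via the $*$-operation, but to do so with a single finite generating set independent of the (possibly infinite) cardinality of $X$. Concretely, I would take $g$ acting on an infinite ray through the $x_n$ as a ``scanning'' element that lets one conjugate a bounded number of fixed generators so as to reach each $x_n$: set something like
$$L_X:=\overline{\langle g_1*x_1,\dots,g_d*x_1,\; t\rangle},$$
where $t$ is an element of infinite order in $\overline{K}$ chosen so that conjugation by powers of $t$ translates the seed $g_i*x_1$ to $g_i*x_n$ for every $n$, using the identity $(g*v)^h=(g^{h|_v})*((v)h)$ recorded before the lemma. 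The key point making $d(L_X)$ independent of $X$ is that the seed consists of the $d$ images $g_i*x_1$ together with the single translating element $t$, so $d(L_X)\le d+1$ regardless of $|X|$.

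The heart of the proof is verifying the sandwich \eqref{eq: propertiesLX}. The upper bound $\overline{L}_X\le\overline{K}*X$ is the easy inclusion: every generator $g_i*x_n$ lies in $\overline{K}*x_n\le\overline{K}*X$, and the translating element must also be arranged to preserve $\overline{K}*X$ (this is where the choice of $t$ inside $\overline{K}$ and the non-comparability of $X$ are used, so that conjugation only permutes the factors $\overline{K}*x_n$ among themselves). For the lower bound I would first show $\overline{L}_X\le\overline{K}*X$ forces $\overline{L_X'}$ into the commutators of the individual factors, and then produce, for each $n$, enough commutators $[g_i*x_n,g_j*x_n]$ to generate a copy of $\overline{K'}$ at $x_n$. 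Since $K$ is branching, $\psi(\mathrm{St}_K(1))\ge K\times\dots\times K$, so $K'$ replicated below $x_n$ lands inside $\overline{K}$; this is precisely what lets the transversal $Y_n$ appear, absorbing the ``tail'' of $K'$ that gets pushed down the tree by repeated replication. Thus $\prod_{n\ge1}\overline{K'}*x_nY_n\le\overline{L_X'}$.

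To pin down that $d(L_X)$ is literally the same for every $X$, I would exhibit an explicit abstract isomorphism type for $L_X$ that does not see $X$: the map sending the seed generators of $L_X$ to those of $L_{X'}$ (for another non-comparable set $X'$) extends to a topological isomorphism, because the defining relations among $g_1*x_1,\dots,g_d*x_1,t$ are governed only by the relations in $K$ and the translation dynamics of $t$, neither of which depends on where the vertices sit. Hence $d(L_X)=d(L_{X'})$. For the final clause, when $G\le W_p$ is finitely generated and regular branch over $K$, I would invoke \cref{lemma: transitive action of branching subgroups}: the level $k$ at which $K$ acts level-transitively on each subtree is $(G,K)$-bounded, namely $k\le\log_p|G:K|$. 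This bound controls how many fixed generators are needed to realize the translation $t$ as a product of $(G,K)$-boundedly many elements, so $d(L_X)$ is itself $(G,K)$-bounded.

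The main obstacle I anticipate is the lower bound in \eqref{eq: propertiesLX}, specifically controlling $\overline{L_X'}$ when $X$ is infinite. One must show that the closure of the derived subgroup of a \emph{finitely} generated group simultaneously contains a full copy of $\overline{K'}$ at \emph{every} $x_n$, even though only a bounded seed is available; the translating element $t$ has infinite order and its sections wander through infinitely many vertices, so I will need to argue that the relevant commutators $[g_i*x_n,\,t^{-m}(g_j*x_1)t^{m}]$ land in the right rigid stabilizers and, after taking closures and using branchness to push $K'$ down into the transversals $Y_n$, generate the claimed product. Managing the interaction between the topological closure and this infinite family of local copies — ensuring nothing is lost in the limit — is the delicate part; the non-comparability of $X$ and the direct-product decomposition $\overline{K}*X=\prod_{n}\overline{K}*x_n$ are the tools I expect to lean on most heavily.
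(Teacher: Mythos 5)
Your construction breaks down at its central mechanism: the translating element $t$ cannot exist. Every automorphism of a rooted tree fixes the root and therefore preserves the level of each vertex, so in the identity $(g*v)^h=(g^{h|_v})*((v)h)$ the vertex $(v)h$ always lies at the same level as $v$. Consequently conjugation can only move the seed $g_i*x_1$ to vertices at the level of $x_1$. But an infinite set $X=\{x_n\}_{n\ge 1}$ of non-comparable vertices necessarily meets infinitely many levels of $T_m$ (each level is a finite set), so no element $t\in\mathrm{Aut}~T_m$ --- indeed no family of automorphisms whatsoever --- can carry $g_i*x_1$ to $g_i*x_n$ for all $n$. The same defect invalidates the commutators $[g_i*x_n,\,t^{-m}(g_j*x_1)t^{m}]$ invoked at the end of your argument, and it also undercuts your claim that $d(L_X)\le d+1$: the entire scheme of ``scanning'' the vertices of $X$ by conjugation is ruled out by level preservation, not merely technically obstructed.

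The paper's proof avoids translation altogether: its generators are infinite \emph{diagonal} products supported on all of $X$ simultaneously, e.g.\ $a_j:=\prod_{n\ge 1}k_j*x_n$, which lie in $\overline{K}$ because $K$ is branching (so $k_j*v\in K$ for every vertex $v$) and $\overline{K}$ is closed. The genuine difficulty --- which your final paragraph correctly identifies but does not resolve --- is then \emph{localization}: producing, from finitely many globally supported generators, elements supported below a single $x_u$. The paper achieves this with the infinite-order element $g$ of \cref{lemma: branching has element of infinite order}, planted below each $x_n$ as $a_w:=\prod_{n\ge 1}g*x_nw$. The generators $b_{j,w}$ carry conjugated copies of $k_j$ at vertices $x_nwv_{n,j}$ sitting on $g$-orbits whose lengths $t_1\dotsb t_n$ grow strictly with $n$; conjugating $b_{j,w}$ and $b_{s,w}$ by suitable powers of $a_w$ realigns their supports at one chosen index $u$ while keeping them disjoint at every other index, so the commutator collapses to the single-vertex element $[k_j,k_s]*x_uwv_{u,r}$. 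Conjugation by the $c_{j,w}$ and $a_j$, together with level-transitivity of $K$ on the subtrees below $W$ (\cref{lemma: transitive action of branching subgroups}), then yields $K'*x_uY_u\le L_X'$ for each $u$, which is exactly the lower bound in \eqref{eq: propertiesLX}. Without a mechanism of this kind --- one that cancels the diagonal support rather than trying to move it --- your proposed sandwich cannot be established.
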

\begin{proof}
    Let $K=\langle k_1,\dotsc, k_r\rangle$, where $r:=d(K)<\infty$. Let $g\in \overline{K}$ be an element obtained via the construction in \cref{lemma: branching has element of infinite order}. Since~$g$ is of infinite order, there exists an infinite path $v_1\dotsb v_n\dotsb$ in $T_m$ such that the section $g^{t_1\dotsb t_i}|_{v_1\dotsb v_i}\in \mathrm{st}_K(v_1\dotsb v_i)$ is of infinite order.  Moreover, if for every $i\ge 1$, $t_1\dotsb t_i$ is the length of the $g$-orbit of the vertex $v_1\dotsb v_i$, we have that $t_i\ge r$, for every $i\ge 1$.  
    Then, for each $i\ge 1$, we set $v_{i,1}:=v_1\dotsb v_i$ and $v_{i,2},\dotsc,v_{i,t_i}$ to be the remaining vertices in the $g^{t_1\dotsb t_{i-1}}$-orbit of $v_{i,1}$, where for $i=1$ we simply consider the $g$-orbit of $v_1$. More explicitly,
    $$v_{i,j}=(v_{i,1})g^{t_1\dotsb t_{i-1}(j-1)}$$
    for every $1\le j\le t_i$. Consider the following  sets of vertices
    $$V_i:=\bigcup_{j=1}^{t_i}v_{i,j}.$$
    Note that $(v_{i+1,j})g^{t_1\dotsb t_{i-1}y}\ne v_{i+1,k}$ for $1\le j,k\le t_{i+1}$ such that $k\ne j$ and any $1\le y\le t_i-1$. Indeed, we have $(v_{i,1})g^{t_1\dotsb t_{i-1}y}=v_{i,1+y}\ne v_{i,1}$ and both $v_{i+1,k},v_{i+1,j}$ are descendants of $v_{i,1}$.

     Now, by \cref{lemma: transitive action of branching subgroups} there exists a level in $T_m$ such that $K$ acts transitively on all the subtrees hanging from that level. Let us denote by $W$ the finite set of all vertices at that level of $T_m$. As $\overline{K}$ is branching, for each $w\in W$ we define the automorphism $a_w\in \overline{K}$ as
    $$a_w:=\prod_{n\ge 1} g*x_nw,$$
    and similarly, for each $1\le j\le r$, $b_{j,w},c_{j,w}\in \overline{K}$ as
    $$ b_{j,w}:=\prod_{n\ge 1}k_j^{{(a_w^{t_1\dotsb t_{n-1}(r-j)}|_{x_nwv_{n,j}})}^{-1}}*x_nwv_{n,j}$$
    and 
    $$ c_{j,w}:=\prod_{n\ge 1}k_j*x_nwv_{n,r}.$$
    Note that $b_{j,w}\in \overline{K}$ because $\overline{K}$ is normal in $\overline{G}$, and $\overline{G}$ is self-similar. Lastly, for $1\le j\le r$, we define $a_j\in \overline{K}$ as
    $$a_j:=\prod_{n\ge 1} k_j*x_n.$$
    We now consider the finitely generated subgroup 
    $$L_X:=\langle a_w,b_{j,w}, c_{j,w}, a_j\mid 1\le j\le r\text{ and }w\in W\rangle\le \overline{K}.$$
    Note that if $G\le W_p$ and it is finitely generated regular branch over $K$, then the cardinality of $W$ is $(G,K)$-bounded by \cref{lemma: transitive action of branching subgroups}. In addition $r$ is $(G,K)$-bounded  $r\le |G:K|(d(G)-1)+1$ by the Schreier-index formula. Thus $d(L_X)$ is $(G,K)$-bounded in this case. In any case $d(L_X)$ is the same for any choice of $X$ by construction.
    
    Now, for any $1\le j<s\le r$, any $w\in W$ and any $u\ge 1$, we get
    \begin{align}
        \label{align: commutator of bs}
        [b_{j,w}^{a_w^{t_1\dotsb t_{u-1}(r-j)}},b_{s,w}^{a_w^{t_1\dotsb t_{u-1}(r-s)}}]=[k_j,k_s]*x_uwv_{u,r}.
    \end{align}

    Hence, any $k\in K$ can be written as a word on the generators of $K$ as $k=k_{i_1}\dotsb k_{i\eta}$, with $1\le i_1,\dotsc,i_\eta\le r$. Thus, conjugating the element in \cref{align: commutator of bs} with $c_{i_1,w}\dotsb c_{i_\eta,w}$ yields that
    $$[k_j,k_s]^{k}*x_uwv_{u,r}\in L_X'.$$
    Thus, for every $u\ge 1$,
    $$K'*x_uwv_{u,r}\le L_X'$$
     holds as $K'=\langle [k_j,k_s]\mid 0\le j,s\le r\rangle^{K}$. Since $K'$ is normal in $G$, using the elements $a_j$ we can move the above subgroup to every vertex at the $K$-orbit of $wv_{u,r}$. Note that for each $w\in W$, $K$ acts level-transitively on  $T^{[w]}_m$, and so, for every $u\ge 1$, we obtain that
    $$K'*x_u Y_u\le L_X',$$
    where $Y_u$ is a transversal of $T_m$. Thus, taking closures yields
    $$\prod_{n\ge 1} \overline{K'}*x_n Y_n\le \overline{L_X'}.$$

    The containment $\overline{L}_X\le \overline{K}*X$ follows from the generators of $L_X$ being contained in $\overline{K}*X$ and the latter being a closed subgroup.
\end{proof}

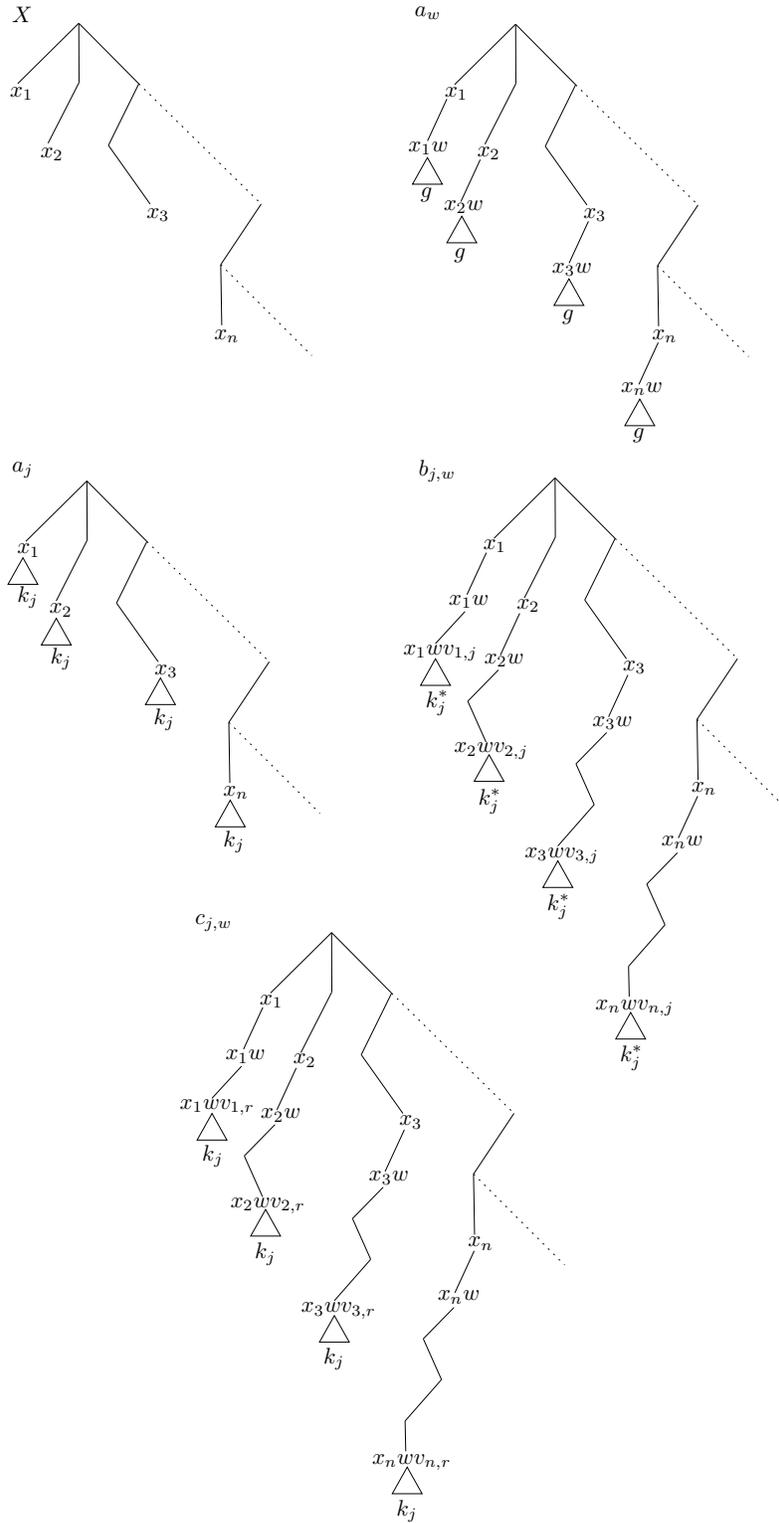
\begin{figure}
    \centering
    \scalebox{.85}{
    \begin{tikzpicture}[x=0.75pt,y=0.75pt,yscale=-0.6,xscale=0.6]

\draw    (71,21) -- (10.62,80.76) ;
\draw    (71,21) -- (71.24,79.73) ;
\draw    (71.24,79.73) -- (40.62,139.76) ;
\draw    (71,21) -- (130.37,80.9) ;
\draw    (130.37,80.9) -- (100.34,141.87) ;
\draw    (251.37,199.9) -- (211.37,259.9) ;
\draw    (100.34,141.87) -- (141.56,199.97) ;
\draw  [dash pattern={on 0.84pt off 2.51pt}]  (130.37,80.9) -- (251.37,199.9) ;
\draw    (211.37,259.9) -- (212.37,319.9) ;
\draw  [dash pattern={on 0.84pt off 2.51pt}]  (211.37,259.9) -- (301.37,349.9) ;


\draw    (503,22) -- (442.62,81.76) ;
\draw    (503,22) -- (503.24,80.73) ;
\draw    (503.24,80.73) -- (472.62,140.76) ;
\draw    (503,22) -- (562.37,81.9) ;
\draw    (562.37,81.9) -- (532.34,142.87) ;
\draw    (683.37,200.9) -- (643.37,260.9) ;
\draw    (532.34,142.87) -- (573.56,200.97) ;
\draw  [dash pattern={on 0.84pt off 2.51pt}]  (562.37,81.9) -- (683.37,200.9) ;
\draw    (643.37,260.9) -- (644.37,320.9) ;
\draw  [dash pattern={on 0.84pt off 2.51pt}]  (643.37,260.9) -- (733.37,350.9) ;

\draw    (435,96) -- (415.89,137.42) ;
\draw    (468,156) -- (448.89,197.42) ;
\draw    (575,217) -- (555.89,258.42) ;
\draw    (644,337) -- (624.89,378.42) ;
\draw   (415.74,154) -- (430.48,180.3) -- (401,180.3) -- cycle ;
\draw   (449.74,212) -- (464.48,238.3) -- (435,238.3) -- cycle ;
\draw   (555.74,274) -- (570.48,300.3) -- (541,300.3) -- cycle ;
\draw   (625.74,393) -- (640.48,419.3) -- (611,419.3) -- cycle ;


\draw   (15.74,550) -- (30.48,576.3) -- (1,576.3) -- cycle ;
\draw    (79,474) -- (18.62,533.76) ;
\draw    (79,474) -- (79.24,532.73) ;
\draw    (79.24,532.73) -- (48.62,592.76) ;
\draw    (79,474) -- (138.37,533.9) ;
\draw    (138.37,533.9) -- (108.34,594.87) ;
\draw    (259.37,652.9) -- (219.37,712.9) ;
\draw    (108.34,594.87) -- (149.56,652.97) ;
\draw  [dash pattern={on 0.84pt off 2.51pt}]  (138.37,533.9) -- (259.37,652.9) ;
\draw    (219.37,712.9) -- (220.37,772.9) ;
\draw  [dash pattern={on 0.84pt off 2.51pt}]  (219.37,712.9) -- (309.37,802.9) ;

\draw   (48.74,610) -- (63.48,636.3) -- (34,636.3) -- cycle ;
\draw   (151.74,669) -- (166.48,695.3) -- (137,695.3) -- cycle ;
\draw   (220.74,790) -- (235.48,816.3) -- (206,816.3) -- cycle ;


\draw    (542,471) -- (481.62,530.76) ;
\draw    (542,471) -- (542.24,529.73) ;
\draw    (542.24,529.73) -- (511.62,589.76) ;
\draw    (542,471) -- (601.37,530.9) ;
\draw    (601.37,530.9) -- (571.34,591.87) ;
\draw    (722.37,649.9) -- (682.37,709.9) ;
\draw    (571.34,591.87) -- (612.56,649.97) ;
\draw  [dash pattern={on 0.84pt off 2.51pt}]  (601.37,530.9) -- (722.37,649.9) ;
\draw    (682.37,709.9) -- (683.37,769.9) ;
\draw  [dash pattern={on 0.84pt off 2.51pt}]  (682.37,709.9) -- (772.37,799.9) ;

\draw    (614,666) -- (594.89,707.42) ;
\draw    (683,786) -- (663.89,827.42) ;
\draw   (544.74,850) -- (559.48,876.3) -- (530,876.3) -- cycle ;
\draw   (616.74,1000) -- (631.48,1026.3) -- (602,1026.3) -- cycle ;
\draw    (580.67,794.33) -- (544.67,835.33) ;
\draw    (614.67,954.33) -- (615.33,984.33) ;
\draw    (593,723) -- (562.67,754) ;
\draw    (562.67,754) -- (580.67,794.33) ;
\draw    (650.67,913.33) -- (614.67,954.33) ;
\draw    (663,842) -- (632.67,873) ;
\draw    (632.67,873) -- (650.67,913.33) ;

\draw    (474,545) -- (454.89,586.42) ;
\draw    (507,605) -- (487.89,646.42) ;
\draw   (423.74,650) -- (438.48,676.3) -- (409,676.3) -- cycle ;
\draw    (452.91,603.23) -- (423.67,635) ;
\draw    (486,661) -- (455.67,692) ;
\draw    (455.67,692) -- (473.67,732.33) ;

\draw   (476.74,745) -- (491.48,771.3) -- (462,771.3) -- cycle ;


\draw    (321,921) -- (260.62,980.76) ;
\draw    (321,921) -- (321.24,979.73) ;
\draw    (321.24,979.73) -- (290.62,1039.76) ;
\draw    (321,921) -- (380.37,980.9) ;
\draw    (380.37,980.9) -- (350.34,1041.87) ;
\draw    (501.37,1099.9) -- (461.37,1159.9) ;
\draw    (350.34,1041.87) -- (391.56,1099.97) ;
\draw  [dash pattern={on 0.84pt off 2.51pt}]  (380.37,980.9) -- (501.37,1099.9) ;
\draw    (461.37,1159.9) -- (462.37,1219.9) ;
\draw  [dash pattern={on 0.84pt off 2.51pt}]  (461.37,1159.9) -- (551.37,1249.9) ;

\draw    (393,1116) -- (373.89,1157.42) ;
\draw    (462,1236) -- (442.89,1277.42) ;
\draw   (323.74,1300) -- (338.48,1326.3) -- (309,1326.3) -- cycle ;
\draw   (395.74,1450) -- (410.48,1476.3) -- (381,1476.3) -- cycle ;
\draw    (359.67,1244.33) -- (323.67,1285.33) ;
\draw    (393.67,1404.33) -- (394.33,1434.33) ;
\draw    (372,1173) -- (341.67,1204) ;
\draw    (341.67,1204) -- (359.67,1244.33) ;
\draw    (429.67,1363.33) -- (393.67,1404.33) ;
\draw    (442,1292) -- (411.67,1323) ;
\draw    (411.67,1323) -- (429.67,1363.33) ;

\draw    (253,995) -- (233.89,1036.42) ;
\draw    (286,1055) -- (266.89,1096.42) ;
\draw   (202.74,1100) -- (217.48,1126.3) -- (188,1126.3) -- cycle ;
\draw    (231.91,1053.23) -- (202.67,1085) ;
\draw    (265,1111) -- (234.67,1142) ;
\draw    (234.67,1142) -- (252.67,1182.33) ;

\draw   (255.74,1195) -- (270.48,1221.3) -- (241,1221.3) -- cycle ;



\draw (3,3) node [anchor=north west][inner sep=0.75pt]   [align=left] {$\displaystyle X$};
\draw (1,82) node [anchor=north west][inner sep=0.75pt]   [align=left] {$\displaystyle x_{1}$$ $$ $};
\draw (31,142) node [anchor=north west][inner sep=0.75pt]   [align=left] {$\displaystyle x_{2}$};
\draw (136,202) node [anchor=north west][inner sep=0.75pt]   [align=left] {$\displaystyle x_{3}$};
\draw (203,322) node [anchor=north west][inner sep=0.75pt]   [align=left] {$\displaystyle x_{n}$};

\draw (636,322) node [anchor=north west][inner sep=0.75pt]   [align=left] {$\displaystyle x_{n}$};
\draw (568,202) node [anchor=north west][inner sep=0.75pt]   [align=left] {$\displaystyle x_{3}$};
\draw (463,141) node [anchor=north west][inner sep=0.75pt]   [align=left] {$\displaystyle x_{2}$};
\draw (431,82) node [anchor=north west][inner sep=0.75pt]   [align=left] {$\displaystyle x_{1}$};

\draw (401,2) node [anchor=north west][inner sep=0.75pt]   [align=left] {$\displaystyle a_{w}$};

\draw (395,136) node [anchor=north west][inner sep=0.75pt]   [align=left] {$\displaystyle x_{1}w$};
\draw (407,181) node [anchor=north west][inner sep=0.75pt]   [align=left] {$\displaystyle g$};
\draw (430,194) node [anchor=north west][inner sep=0.75pt]   [align=left] {$\displaystyle x_{2}w$};
\draw (440,241) node [anchor=north west][inner sep=0.75pt]   [align=left] {$\displaystyle g$};
\draw (537,256) node [anchor=north west][inner sep=0.75pt]   [align=left] {$\displaystyle x_{3}w$};
\draw (547,302) node [anchor=north west][inner sep=0.75pt]   [align=left] {$\displaystyle g$};
\draw (605,374) node [anchor=north west][inner sep=0.75pt]   [align=left] {$\displaystyle x_{n}w$};
\draw (617,420) node [anchor=north west][inner sep=0.75pt]   [align=left] {$\displaystyle g$};


\draw (212,774) node [anchor=north west][inner sep=0.75pt]   [align=left] {$\displaystyle x_{n}$};
\draw (144,654) node [anchor=north west][inner sep=0.75pt]   [align=left] {$\displaystyle x_{3}$};
\draw (40,593) node [anchor=north west][inner sep=0.75pt]   [align=left] {$\displaystyle x_{2}$};
\draw (8,534) node [anchor=north west][inner sep=0.75pt]   [align=left] {$\displaystyle x_{1}$};
\draw (3,453) node [anchor=north west][inner sep=0.75pt]   [align=left] {$\displaystyle a_{j}$};

\draw (8,576) node [anchor=north west][inner sep=0.75pt]   [align=left] {$\displaystyle k_{j}$};
\draw (41,637) node [anchor=north west][inner sep=0.75pt]   [align=left] {$\displaystyle k_{j}$};
\draw (143,697) node [anchor=north west][inner sep=0.75pt]   [align=left] {$\displaystyle k_{j}$};
\draw (212,818) node [anchor=north west][inner sep=0.75pt]   [align=left] {$\displaystyle k_{j}$};


\draw (675,770) node [anchor=north west][inner sep=0.75pt]   [align=left] {$\displaystyle x_{n}$};
\draw (607,650) node [anchor=north west][inner sep=0.75pt]   [align=left] {$\displaystyle x_{3}$};
\draw (502,589) node [anchor=north west][inner sep=0.75pt]   [align=left] {$\displaystyle x_{2}$};
\draw (469,530) node [anchor=north west][inner sep=0.75pt]   [align=left] {$\displaystyle x_{1}$};

\draw (405,451) node [anchor=north west][inner sep=0.75pt]   [align=left] {$\displaystyle b_{j,w}$};
\draw (577,705) node [anchor=north west][inner sep=0.75pt]   [align=left] {$\displaystyle x_{3} w$};
\draw (645,823) node [anchor=north west][inner sep=0.75pt]   [align=left] {$\displaystyle x_{n} w$};
\draw (510,835) node [anchor=north west][inner sep=0.75pt]   [align=left] {$\displaystyle x_{3} wv_{3,j}$};
\draw (532,880) node [anchor=north west][inner sep=0.75pt]   [align=left] {$\displaystyle k_j^*$};
\draw (580,985) node [anchor=north west][inner sep=0.75pt]   [align=left] {$\displaystyle x_{n} wv_{n,j}$};
\draw (604,1030) node [anchor=north west][inner sep=0.75pt]   [align=left] {$\displaystyle k_j^*$};
\draw (435,585) node [anchor=north west][inner sep=0.75pt]   [align=left] {$\displaystyle x_{1} w$};
\draw (470,643) node [anchor=north west][inner sep=0.75pt]   [align=left] {$\displaystyle x_{2} w$};
\draw (390,634) node [anchor=north west][inner sep=0.75pt]   [align=left] {$\displaystyle x_{1} wv_{1,j}$};
\draw (412,679) node [anchor=north west][inner sep=0.75pt]   [align=left] {$\displaystyle k_j^*$};
\draw (440,731) node [anchor=north west][inner sep=0.75pt]   [align=left] {$\displaystyle x_{2} wv_{2,j}$};
\draw (464,776) node [anchor=north west][inner sep=0.75pt]   [align=left] {$\displaystyle k_j^*$};


\draw (184,901) node [anchor=north west][inner sep=0.75pt]   [align=left] {$\displaystyle c_{j,w}$};
\draw (356,1155) node [anchor=north west][inner sep=0.75pt]   [align=left] {$\displaystyle x_{3} w$};
\draw (424,1273) node [anchor=north west][inner sep=0.75pt]   [align=left] {$\displaystyle x_{n} w$};
\draw (289,1285) node [anchor=north west][inner sep=0.75pt]   [align=left] {$\displaystyle x_{3} wv_{3,r}$};
\draw (311,1330) node [anchor=north west][inner sep=0.75pt]   [align=left] {$\displaystyle k_j$};
\draw (359,1435) node [anchor=north west][inner sep=0.75pt]   [align=left] {$\displaystyle x_{n} wv_{n,r}$};
\draw (383,1480) node [anchor=north west][inner sep=0.75pt]   [align=left] {$\displaystyle k_j$};
\draw (214,1035) node [anchor=north west][inner sep=0.75pt]   [align=left] {$\displaystyle x_{1} w$};
\draw (249,1093) node [anchor=north west][inner sep=0.75pt]   [align=left] {$\displaystyle x_{2} w$};
\draw (169,1084) node [anchor=north west][inner sep=0.75pt]   [align=left] {$\displaystyle x_{1} wv_{1,r}$};
\draw (191,1129) node [anchor=north west][inner sep=0.75pt]   [align=left] {$\displaystyle k_j$};
\draw (219,1181) node [anchor=north west][inner sep=0.75pt]   [align=left] {$\displaystyle x_{2} wv_{2,r}$};
\draw (243,1226) node [anchor=north west][inner sep=0.75pt]   [align=left] {$\displaystyle k_j$};
\draw (454,1220) node [anchor=north west][inner sep=0.75pt]   [align=left] {$\displaystyle x_{n}$};
\draw (386,1100) node [anchor=north west][inner sep=0.75pt]   [align=left] {$\displaystyle x_{3}$};
\draw (281,1039) node [anchor=north west][inner sep=0.75pt]   [align=left] {$\displaystyle x_{2}$};
\draw (248,980) node [anchor=north west][inner sep=0.75pt]   [align=left] {$\displaystyle x_{1}$};

\end{tikzpicture}   
    }
    \caption{The generators of $L_X$ for $X=\{x_n\}_{n\ge 1}\subseteq T_m$, where $k_j^*$ represents the appropriate conjugate of $k_j$ appearing in the proof of \cref{lemma: construction of LI}. }
    \label{fig:enter-label}
\end{figure}

\begin{proof}[Proof of \cref{theorem: finitely generated spectrum of regular branch}]

Let $G$ be a finitely generated regular branch group. Then by \cref{lemma: normal core}, $G$ has a normal branching subgroup $K$ of finite index. Moreover, by \cref{lemma: construction of LI}, there exists a subgroup $\overline{L_X}\le \overline{K}$ satisfying \eqref{eq: propertiesLX} and we have    \begin{align*}
        \{\mathrm{hdim}^{\mathcal{S}}_{\overline{G}}(\overline{L}_X)\mid X\subseteq T_m\text{ a subset of non-comparable vertices}\}&\subseteq \mathrm{hspec}^{\mathcal{S}}_{d(L_X)}(\overline{G})\\
        &\subseteq [0,1].
    \end{align*}
    We will show that the hypothesis of \cref{proposition: dimension is a series} are satisfied. In this way, we have that $\mathrm{hdim}^{\mathcal{S}}_{\overline{G}}(\overline{L}_X)=\mu(X)$ and we conclude that there exists $b\ge 2$ such that every number $\gamma\in [0,1]$ is realizable as the Hausdorff dimension of the closure of a $b$-generated subgroup $L_X\le \overline{K}\le\overline{G}$ by simply choosing $X\subseteq T_m$ such that $\mu(X)=\gamma$ (see \cref{lemma: existence of V}).

    To that aim it suffices to show that $\overline{K'}$ is 1-dimensional in $G$. Since $K$ is of finite index in $G$, \cref{lemma: indices grow in closures} implies that  $\overline{K}$ is 1-dimensional in $\overline{G}$. Furthermore, $K$ being finitely generated yields that  $K$ and $K'$ have the same Hausdorff dimension in $\Aut T_m$ (see \cite[Proposition~3.1]{JorgeMikel}). As $G$ is regular branch it has positive and strong Hausdorff dimension in $\Aut T_m$, by \cref{thm: Jorge_strong}. Then arguing as in the proof of \cref{corollary: full and normal spectra}, we obtain that $\overline{K'}$ is also 1-dimensional in $\overline{G}$. This finishes the proof.

If $G$ is just infinite, $K'$ has finite index in $G$ because $K'$ is normal in $G$ and by  \cref{lemma: normal subgroups contain derived of rist}, $G$ cannot be virtually abelian. Then, by \cref{lemma: indices grow in closures}, $\overline{K'}$ has dimension 1 in $\overline{G}$ (with respect to any filtration). The result holds by applying \cref{proposition: dimension is a series} which works for any nice filtration.
    
    If $G\le W_p$ we may further assume $d(L_X)$ is $(G,K)$-bounded by \cref{lemma: construction of LI}.
\end{proof}

\section{Iterated wreath products}
\label{section: iterated wreath products}

The goal of this section is to establish \cref{Theorem: WH main result}. To that aim, it suffices to prove the following result:

\begin{theorem}
\label{theorem: existence of 1-dimensional subgroups in WH}
    Let $H\le \mathrm{Sym}(m)$. Then $W_H$ contains a subgroup $K$ which is $(d(H)+1)$-generated and whose closure is 1-dimensional in $W_H$.
\end{theorem}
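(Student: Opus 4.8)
The plan is to produce $K$ explicitly, following the two-stage strategy announced in the introduction: first generalize the \v{S}uni\'{c} groups of \cite{SunicHausdorff} from $W_p$ to $W_H$, and then run Siegenthaler's limiting argument \cite{SiegenthalerBinary} in this wider setting. Fix a minimal generating set $h_1,\dots,h_d$ of $H$ with $d=d(H)$. The first step is to attach to this data a finitely generated \emph{self-similar regular branch} subgroup of $W_H$: I would introduce $d$ ``directed'' automorphisms $b_1,\dots,b_d\in W_H$, each defined recursively along a fixed spine so that the first-level label of $b_i$ is the rooted automorphism $h_i$ and the section of $b_i$ further down the spine is $b_i$ itself, together with a single auxiliary automorphism $a\in W_H$ chosen so that $\langle a,b_1,\dots,b_d\rangle$ already realizes all of $H$ on the first level. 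This yields a candidate generating set of size exactly $d(H)+1$, and one checks that $G_0:=\langle a,b_1,\dots,b_d\rangle\le W_H$ is self-similar, fractal, surjects onto $H$ on $\mathcal{L}_1$, and is regular branch over a suitable finite-index subgroup. Its Hausdorff dimension is then computed in closed form via \cref{thm: Jorge_strong}, as $\tfrac{1}{\log|H|}\bigl(\log|G_0:\mathrm{St}_{G_0}(1)|-S_{G_0}(1/m)\bigr)$. Varying the recursion depth (how far along the spine the labels $h_i$ are distributed) produces a whole family $\{G^{(k)}\}_{k\ge 1}$ of $(d(H)+1)$-generated regular branch subgroups whose dimensions increase to $1$; this is the $W_H$-analogue of the \v{S}uni\'{c} computation.

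The crucial point, however, is that no member of this family is itself $1$-dimensional: when $W_H$ is not topologically finitely generated (as for $W_p$, by \cite[Corollary~3.6]{Bondarenko}), \cref{thm: Jorge_strong} forces a proper finitely generated self-similar subgroup to satisfy $S_{G^{(k)}}(1/m)>0$, hence $\mathrm{hdim}_{W_H}^{\mathcal S}(\overline{G^{(k)}})<1$. This is exactly why a non-self-similar assembly is unavoidable. So the second step is to amalgamate the family into a \emph{single} $(d(H)+1)$-generated group, in the spirit of the products $a_j=\prod_{n\ge 1}k_j*x_n$ appearing in \cref{lemma: construction of LI}: I would fix an infinite non-comparable (or spine-indexed) collection of vertices and define each of the $d(H)+1$ generators of $K$ to encode, simultaneously and at disjoint locations going deeper into the tree, the corresponding generator of every $G^{(k)}$. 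Because all the $G^{(k)}$ share the same number of generators, the assembled group $K=\langle a,b_1,\dots,b_d\rangle$ is still $(d(H)+1)$-generated, and by construction $\overline K$ reproduces the structure of $G^{(k)}$ arbitrarily deep in the tree.

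It then remains to show $\mathrm{hdim}_{W_H}^{\mathcal S}(\overline K)=1$. Since $\overline K$ is neither self-similar nor branching, \cref{thm: Jorge_strong} no longer applies and I would compute the liminf in \cref{align: hdim def} directly: the deeply embedded $G^{(k)}$-blocks guarantee that at infinitely many levels $n$ the congruence image $K\mathrm{St}(n)/\mathrm{St}(n)$ fills the corresponding quotient of $W_H$ up to a ratio tending to $1$, so that $\liminf_{n}\tfrac{\log|K:\mathrm{St}_K(n)|}{\log|W_H:\mathrm{St}(n)|}=1$. Monotonicity of Hausdorff dimension, together with \cref{lemma: indices grow in closures} to pass to closures, upgrades this to $\mathrm{hdim}_{W_H}^{\mathcal S}(\overline K)=1$. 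Finally, I would record that the resulting $K$ is a finitely generated (and, after intersecting with the branch structure, branching) subgroup, so that it can serve as the input $K$ in the machinery of \cref{lemma: construction of LI} and \cref{proposition: dimension is a series} used to derive \cref{Theorem: WH main result}.

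The main obstacle is the second stage, i.e. pushing the dimension from ``arbitrarily close to $1$'' to ``exactly $1$'' while keeping the generating number equal to $d(H)+1$. Concretely, the delicate part is verifying that the index ratio catches up to $1$ at infinitely many levels (a $\liminf$, not merely a $\limsup$ or supremum over the family), which forces the deep blocks to dominate the shallow, fixed part of the configuration; this is precisely where Siegenthaler's argument must be adapted. A secondary technical difficulty is carrying out the generalized \v{S}uni\'{c} construction and its regular-branch/dimension computation for an arbitrary transitive $H\le\mathrm{Sym}(m)$ rather than the cyclic group $\langle\sigma\rangle$ of $W_p$, since $H$ need not be abelian or regular and one must still realize both the full first-level action and the branching with only $d(H)+1$ generators.
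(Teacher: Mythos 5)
Your stage 1 cannot work as stated, and this is the fatal gap. For a finitely generated \emph{self-similar} subgroup $G\le W_H$, \cref{thm: Jorge_strong} gives $\mathrm{hdim}_{W_H}^{\mathcal S}(\overline G)=\frac{1}{\log|H|}\bigl(\log|G:\mathrm{St}_G(1)|-S_G(1/m)\bigr)$, where every $\mathfrak{s}_n\ge 0$ and each nonzero $\mathfrak{s}_n$ is bounded below by the positive constant $\log 2$ (it is the logarithm of the integer index $|\mathfrak{S}_n:\mathfrak{S}_{n+1}|\ge 2$). Hence $\mathrm{hdim}_{W_H}^{\mathcal S}(\overline G)\ge 1-\epsilon$ forces $G$ to act as the full group $H$ on the first level and $\mathfrak{s}_n=0$ for all $n\le N(\epsilon)$, with $N(\epsilon)\to\infty$ as $\epsilon\to 0$; since $\mathfrak{r}_1=0$, unwinding $\mathfrak{r}_n=0$ shows that the congruence quotients of $G$ are then the \emph{full} iterated wreath products $H\wr\overset{n}{\dotsb}\wr H$ for all $n\le N(\epsilon)$. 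But $(H\wr\overset{n}{\dotsb}\wr H)^{\mathrm{ab}}\cong (H^{\mathrm{ab}})^n$, so $d(G)\ge n\cdot d(H^{\mathrm{ab}})$, which tends to infinity whenever $H$ is not perfect --- in particular for every cyclic $H$, i.e.\ precisely the $W_p$ case the theorem must cover. So there is no $(d(H)+1)$-generated self-similar family with dimensions tending to $1$: growth of the number of generators is unavoidable, which is exactly why the paper's generalized \v{S}uni\'{c} groups $G_{H,r}$ have $d(G_{H,r})$ linear in $r$ (\cref{lemma: existence of groups tending to 1}; it is the fullness of the $(r+1)$st congruence quotient in \cref{proposition: sn=0 for n at most r} that pushes the dimension up). Your own correct observation that a finitely generated self-similar subgroup can never be exactly $1$-dimensional is the qualitative shadow of this quantitative obstruction.

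Consequently all the weight falls on your stage 2, and the diagonal assembly you describe does not deliver the conclusion. A group generated by elements of the form $\prod_k g_i^{(k)}*v_k$ is only a subdirect product inside $\prod_k G^{(k)}*v_k$; Hausdorff dimension degrades badly on subdirect products (a diagonally embedded copy has only a small fraction of the dimension of the ambient product), so surjecting onto each block says nothing about $\mathrm{hdim}(\overline K)$. Moreover your quantifier is wrong: filling the congruence quotients ``at infinitely many levels'' yields $\limsup=1$, whereas the dimension is a $\liminf$. The mechanism that actually makes this work --- absent from your proposal --- is Siegenthaler's compression: adding-machine-type elements $a_{n,j}$ together with the operator $\delta$ pack \emph{all} $d(G_r)$ generators of $G_r$, plus the next element $b_{r+1}$, along a single $a_{n,r}$-orbit into \emph{one} generator $b_r$, and commutation with conjugates by powers of $a_{n,r}$ isolates the individual components, giving the full level-wide containment $G_{H,r}'*\mathcal{L}_{n_r}\le K'$ for every $r$ (\cref{lemma: Siegenthaler H' in rist}, used exactly as in \cref{lemma: construction of LI}). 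One then concludes by monotonicity, the strong Hausdorff dimension of the self-similar groups $G_{H,r}$ (\cref{thm: Jorge_strong}, together with the fact that passing to $G_{H,r}'$ does not change the dimension) and \cref{proposition: dimension is a series}, never computing a $\liminf$ for the non-self-similar group $K$ directly. Without an unboundedly generated approximating family and this commutator-isolation compression, both halves of your plan fail.
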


Since $W_H$ is regular branch over itself, the same construction as in the proof of \cref{theorem: finitely generated spectrum of regular branch} can be carried. Note also that the subgroup $K$ need not be normal in $W_H$. In fact, contrary to the more general case of a regular branch group $G$, in an iterated wreath product $W_H$ we may simplify the definition of the automorphisms $a_w$'s in the proof of \cref{theorem: finitely generated spectrum of regular branch} by considering a truncated version of the element $g$, so that the sections of $g$ appearing conjugating~$k_j$ in the definition of the $b_{j,w}$'s become trivial.

The proof of \cref{theorem: existence of 1-dimensional subgroups in WH} is divided into two steps. First we construct a family of groups generalizing \v{S}uni\'{c} groups in \cite{SunicHausdorff}:

\begin{lemma}
\label{lemma: existence of groups tending to 1}
    Let $H\le \mathrm{Sym}(m)$ be any transitive group where $m\ge 2$. Then there exist finitely generated, level-transitive, self-similar subgroups $G_1,G_2,\dotsc\le W_H$ such that $\lim_{r\to\infty }\mathrm{hdim}_{W_H}^{\mathcal{S}}(\overline{G}_r)=1$ and $d(G_r)$ is linear on $r$.
\end{lemma}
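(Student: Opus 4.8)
The plan is to transport Šunić's spinal construction from \cite{SunicHausdorff} to the iterated wreath product $W_H$ and to tune a single integer parameter so that the resulting self-similar groups exhaust $W_H$ in the limit. Fix generators $h_1,\dots,h_d$ of $H$ with $d=d(H)$ and realize each $h_i$ as a rooted automorphism $a_i\in W_H$. For each $r\ge 1$ we attach a finite \emph{spinal} group $B_r$ whose order grows with $r$, equipped with an endomorphism $\rho_r\colon B_r\to B_r$ (the shift) and a surjection $\omega_r\colon B_r\twoheadrightarrow H$. Choosing a generating set $b_1,\dots,b_{s(r)}$ of $B_r$ with $s(r)$ linear in $r$, we define the associated \emph{directed} generators along the rightmost ray (the spine) by the recursion
$$\psi(b)=(\,\omega_r(b),1,\dots,1,\rho_r(b)\,),\qquad b\in B_r,$$
so that each directed generator carries a rooted element of $H$ on one off-spine coordinate and the shifted element $\rho_r(b)$ on the spine. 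Set $G_r:=\langle a_1,\dots,a_d,b_1,\dots,b_{s(r)}\rangle\le W_H$; then $d(G_r)\le d+s(r)$ is linear in $r$.

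Self-similarity holds by design: every first-level section of a generator is again a generator, namely a rooted $a_i$, the identity, or $\rho_r(b_j)\in B_r\le G_r$. For level-transitivity, the rooted copy of $H$ already acts transitively on the first level because $H\le\mathrm{Sym}(m)$ is transitive; by surjectivity of $\omega_r$ the first-level sections of $\mathrm{St}_{G_r}(1)$ generate a rooted copy of $H$ on a first-level subtree, and conjugating by the transitive rooted action spreads this action to every subtree, and descending along the spine to every level. The standard inductive argument for spinal groups then yields transitivity on all levels.

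For the Hausdorff dimension we invoke \cref{thm: Jorge_strong}. Since $G_r$ is self-similar and its image in $\mathrm{Sym}(m)$ is exactly $H$, we have $\log|G_r:\mathrm{St}_{G_r}(1)|=\log|H|$, whence
$$\mathrm{hdim}_{W_H}^{\mathcal{S}}(\overline{G}_r)=1-\frac{S_{G_r}(1/m)}{\log|H|}.$$
It therefore suffices to prove that the defect $S_{G_r}(1/m)$ tends to $0$ as $r\to\infty$. The coefficients $\mathfrak{s}_n$ are governed by the successive congruence quotients $\mathrm{St}_{G_r}(n-1)/\mathrm{St}_{G_r}(n)$, and the spinal recursion expresses these through the iterates $\rho_r^{\,k}$ and the functional $\omega_r$, so that, exactly as in \cite{SunicHausdorff}, $S_{G_r}$ is controlled by the characteristic data of $\rho_r$. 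Choosing $\rho_r$ whose iterates remain nontrivial to depth of order $r$ pushes the first essential relations deep into the tree, forcing $S_{G_r}(1/m)\to 0$; as the Hausdorff dimension never exceeds $1$, a lower bound tending to $1$ is all that is needed.

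The main obstacle is precisely this last estimate: controlling $S_{G_r}(1/m)$ uniformly in $r$ for an \emph{arbitrary} transitive $H$. Šunić works with a cyclic rooted group, where $B_r$ is abelian and the interplay between rooted and directed generators is transparent; the new difficulty is a possibly nonabelian $H$, which forces $B_r$ to be nonabelian in order to surject onto $H$ via $\omega_r$ while still carrying a shift $\rho_r$ of growing order. One must check that the generalized directed generators, together with their rooted conjugates, reproduce all of $H$ on every subtree below the spine — so that no dimension is lost beyond the spinal defect — and then track the growth of the congruence quotients without the commutativity that simplifies the $W_p$ computation. Verifying that $\omega_r$ and $\rho_r$ can be chosen so that the index growth matches the full group up to a defect of size $O(m^{-r})$ is the technical heart of the argument; once it is in place, the lemma follows.
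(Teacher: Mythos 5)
Your proposal follows the same general strategy as the paper (generalize \v{S}uni\'{c}'s spinal construction to $W_H$ and let a depth parameter $r\to\infty$), but it stops exactly at the point where the actual work lies, and you say so yourself: ``controlling $S_{G_r}(1/m)$ uniformly in $r$ \dots is the technical heart of the argument; once it is in place, the lemma follows.'' That estimate is not a detail to be deferred --- it \emph{is} the lemma. The paper makes it work through two concrete mechanisms that your outline lacks. First, the construction is rigged so that $\mathfrak{s}_n=0$ for all $n\le r$: the paper uses one spine per generator $x_i$ of $H$, of length $r$, with the ``output'' $x_i$ placed at the \emph{distinct} coordinate $i$ of the last spinal generator, which forces the $(r+1)$st congruence quotient of $G_{H,r}$ to be the full iterated wreath product $H\wr\overset{r+1}{\dotsb}\wr H$. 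Second, for $n>r$ one needs a bound on $\mathfrak{s}_n$ that grows only linearly in $r$; the paper gets $\mathfrak{s}_n=\mathfrak{r}_n\le m\log|G_{H,r}:G_{H,r}'|\le m(m-1)(r+1)\log|H|$ by proving that $G_{H,r}$ is \emph{regular branch over its commutator subgroup}, so that $\psi(\mathrm{St}(1))$ contains $G_{H,r}'\times\dotsb\times G_{H,r}'$. Summing the tail then gives $\mathrm{hdim}\ge 1-(r+1)/m^{r-1}$. Your single-spine design, with every directed generator's rooted part $\omega_r(b)$ sitting at the same first coordinate, is precisely the configuration in which the nonabelian commutator computations needed for such a branch structure become unmanageable; the coordinate separation in the paper is what makes identities like $[b_{i,j},x_k]*m=[b_{i,j-1},b_{k,r}^{h^{-k}}]$ land inside $G_{H,r}'$ without overlap.

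There is a second, independent gap: even the paper's branch argument only works when $H$ contains a permutation with a cycle of length at least $2\ell+1$ (where $\ell$ is the number of chosen generators), which an arbitrary transitive $H$ need not have. The paper handles this by a reduction you have no analogue of: pass to $H\wr\overset{n}{\dotsb}\wr H$ acting on $T_{m^n}$ (which does contain a long cycle, of length $\ge 2^n$, while its rank grows only linearly in $n$), build the generalized \v{S}uni\'{c} groups there, and pull back to $W_H$ via the deletion-of-levels embedding, taking self-similar closures; two further lemmas are needed to show this pullback keeps the rank linear in $r$ and does not decrease the Hausdorff dimension (the latter relying on strong Hausdorff dimension of self-similar closures). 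Without either the branch-over-commutator estimate or this reduction for general transitive $H$, your argument does not establish $S_{G_r}(1/m)\to 0$, and the limit statement remains unproven.
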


Then we generalize the argument of Siegenthaler in \cite{SiegenthalerBinary} to construct a 1-dimensional closed subgroup from this family of the generalized \v{S}uni\'{c} groups.

\begin{lemma}
\label{lemma: Siegenthaler generalization}
   Let $G_1,G_2,\dotsc\le W_H$ be a sequence of level-transitive subgroups such that $\lim_{r\to\infty }\mathrm{hdim}_{W_H}^{\mathcal{S}}(\overline{G}_r)=1$ and $d(G_r)$ is polynomial on $r$. Then there exists a group $K\le W_H$ generated by $d(H)+1$ elements whose closure in $W_H$ has Hausdorff dimension 1. 
\end{lemma}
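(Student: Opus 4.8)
The plan is to mimic the spinal construction of \cref{lemma: construction of LI} (following Siegenthaler \cite{SiegenthalerBinary}), but now feeding in the whole sequence $G_1,G_2,\dots$ at once. Write $G_r=\langle g_{r,1},\dots,g_{r,s_r}\rangle$ with $s_r:=d(G_r)$, and set $\alpha_r:=\mathrm{hdim}_{W_H}^{\mathcal S}(\overline{G}_r)\to 1$. Since $W_H$ is regular branch over itself, its rigid vertex stabilizers are full copies of $W_H$ and its rigid level stabilizers are of finite index; in particular \cref{proposition: dimension is a series} applies to $G=W_H$ with $\mathcal N=\mathcal S$. The key reduction is that it suffices to produce a single $(d(H)+1)$-generated $K\le W_H$ whose closure contains, for every $r$, a \emph{full transversal planting} $\prod_{v\in\mathcal L_{\ell_r}}\overline{G_r'}*v\le\overline K$ at some level $\ell_r$. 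Indeed, as $G_r$ is finitely generated, $\overline{G_r'}$ has the same Hausdorff dimension $\alpha_r$ in $W_H$ as $\overline{G_r}$ (by \cite[Proposition 3.1]{JorgeMikel} together with \cref{lemma: strong dimension then product}); hence \cref{lemma: strong dimension then product} gives $\mathrm{hdim}_{W_H}^{\mathcal S}(\overline{G_r'}*v)=\alpha_r/N_{\ell_r}$, and \cref{proposition: dimension is a series} applied to the transversal $\mathcal L_{\ell_r}$ with weights $\omega_v=\alpha_r$ yields $\mathrm{hdim}_{W_H}^{\mathcal S}\big(\prod_{v\in\mathcal L_{\ell_r}}\overline{G_r'}*v\big)=\alpha_r$. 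By monotonicity of the Hausdorff dimension, $\mathrm{hdim}_{W_H}^{\mathcal S}(\overline K)\ge\sup_r\alpha_r=1$, and the reverse inequality is automatic, so $\mathrm{hdim}_{W_H}^{\mathcal S}(\overline K)=1$.

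Thus everything reduces to the construction of $K$. First I would fix $d(H)$ rooted generators $\hat h_1,\dots,\hat h_{d(H)}$ lifting a generating set of $H$; since $H$ is transitive these realise a transitive action on $\mathcal L_1$. The extra generator is a single carrier element $t\in W_H$ of infinite order, chosen as in \cref{lemma: branching has element of infinite order} (in the simplified truncated form indicated in the remark after \cref{theorem: existence of 1-dimensional subgroups in WH}) so that, along a fixed infinite ray, its prescribed sections at the chosen levels $\ell_1<\ell_2<\cdots$ run through the generators $g_{r,1},\dots,g_{r,s_r}$ of $G_r$. The polynomial bound on $s_r$ is precisely what guarantees that these $\sum_{r'\le r}s_{r'}$ sections fit at levels $\ell_r$ growing slowly enough that $t$ is a single well-defined automorphism of $W_H$ (there being exponentially many vertices at each level to host the polynomially many required sections), keeping the whole group $(d(H)+1)$-generated. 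I would arrange the placement so that $K=\langle \hat h_1,\dots,\hat h_{d(H)},t\rangle$ is level-transitive.

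Next, the heart of the argument is to extract from $t$ each individual planted generator and to replicate it across all of $\mathcal L_{\ell_r}$. This is carried out exactly as in \cref{lemma: construction of LI}: using the infinite-order element $t$ and the orbit combinatorics of the prescribed ray, one forms the analogues of the conjugates and commutators $b_{j,w},c_{j,w}$ that isolate a single section $g_{r,j}*v$ while cancelling all unwanted sections of $t$; the rooted generators together with the branching of $W_H$ over itself then spread $g_{r,j}*v$ over the whole level, and collecting over $j$ produces $\prod_{v\in\mathcal L_{\ell_r}}\overline{G_r'}*v$ inside $\overline{K'}$ for every $r$. Combined with the reduction of the first paragraph, this proves the lemma.

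I expect the \emph{main obstacle} to be precisely this isolation-and-cancellation step run simultaneously for all $r$: one must choose the placement of the sections of $t$ (disjoint rays, or non-comparable orbits indexed by $r$ with orbit lengths at least $s_r$ as in \cref{lemma: construction of LI}) so that the cancellations needed to extract the $G_r$-generators at level $\ell_r$ do not interfere with those for $G_{r'}$, $r'\ne r$, and so that the surplus sections of $t$ lying off the relevant subtrees contribute trivially. A secondary point to verify is that $t$ can indeed be taken inside $W_H$, which holds because every prescribed section $g_{r,j}$ lies in $W_H$ and $W_H$ is closed under the section-replacing operations used.
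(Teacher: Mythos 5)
Your first paragraph (reducing the lemma to planting $\prod_{v\in\mathcal L_{\ell_r}}\overline{G_r'}*v$ inside $\overline K$ for every $r$, and then invoking \cite[Proposition 3.1]{JorgeMikel}, \cref{lemma: strong dimension then product}, \cref{proposition: dimension is a series} and monotonicity) is exactly the paper's reduction and is fine. The gap is in the construction of $K$, which is where the real content of the lemma lies. Your auxiliary generators are \emph{rooted} automorphisms $\hat h_1,\dotsc,\hat h_{d(H)}$. A rooted automorphism $\hat h$ has $\hat h|_v=1$ at every vertex, so conjugation by any word $x$ in the $\hat h_j$ sends a planted section $g*(x_1x_2\dotsb x_n)$ to $g*((x_1)h\,x_2\dotsb x_n)$: such words only permute the first letter, so their orbits on $\mathcal L_{\ell_r}$ have length at most $m$, independently of $r$. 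The isolation step you defer to ``exactly as in \cref{lemma: construction of LI}'' needs, at the working level $\ell_r$, movers whose orbits are long enough to shift the $s_r$ planted generators past one another (and $s_r\to\infty$), \emph{and} which have trivial sections below that level, so that conjugating the carrier introduces no junk. The rooted elements fail the first condition; the one remaining generator $t$ cannot serve as its own mover (conjugates of $t$ by powers of $t$ are just $t$, so the relevant commutators vanish); and mixed words in $t$ and the $\hat h_j$ fail the second condition, since their nontrivial sections conjugate the planted generators --- compensating for that, as the elements $b_{j,w},c_{j,w}$ do in \cref{lemma: construction of LI}, costs additional generators, which is precisely what you cannot afford with $d(H)+1$ of them.

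What the paper does instead --- and what is missing from your sketch --- is twofold. First, the movers are the \emph{truncated directed} elements $a_{n,j}$ defined by $\psi(a_{n,j})=(a_{n-1,j},1,\dotsc,1)h_j$, which act transitively on the first $n$ levels and are trivial below level $n$, so conjugation by them is clean. Second, and crucially, the carrier is defined by a \emph{nested recursion}: writing $\ell=d(H)$,
$$b_r=\delta\bigl(g_1,\dotsc,g_{d(G_r)},\,a_{d(G_{r+1})+\ell+1,1},\dotsc,a_{d(G_{r+1})+\ell+1,\ell},\,b_{r+1}\bigr),$$
i.e.\ the sections planted at stage $r$ comprise not only the generators of $G_r$ but also the stage-$(r+1)$ movers and the stage-$(r+1)$ carrier. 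Hence $K:=K_1=\langle a_{d(G_1)+\ell+1,j},\,b_1\mid 1\le j\le\ell\rangle$ has $d(H)+1$ generators, and \cref{lemma: Siegenthaler H' in rist} applies recursively to give $K_r'\ge K_{r+1}'*\mathcal L_{d(G_r)+\ell+1}$ and $K_r'\ge G_r'*\mathcal L_{d(G_r)+\ell+1}$, so planted copies of every $G_r'$ over a full level appear inside $K_1'$ without ever adding deeper movers to the generating set; the non-interference between stages that you flag as the ``main obstacle'' is then automatic, because stage $r+1$ lives entirely inside subtrees rooted at level $d(G_r)+\ell+1$. Without this recursive planting of movers (or some substitute for it), the extraction for $r\ge 2$ in your construction cannot be carried out.
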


Now \cref{theorem: existence of 1-dimensional subgroups in WH} is a direct consequence of both \cref{lemma: existence of groups tending to 1} and \cref{lemma: Siegenthaler generalization}.

\subsection{Generalized \v{S}uni\'{c} groups}

Let us fix $m\ge 3$ (the case $m=2$ reduces to the construction of \v{S}uni\'{c} for $p=2$ in \cite{SunicHausdorff}) and $H\le \mathrm{Sym}(m)$ transitive. We also fix some generating set $H=\langle X\rangle =\langle x_1,\dotsc,x_\ell \rangle$ where $\ell \le m-1$. Note this is possible as $d(H)\le m-1$ by \cite[Theorem 1.1]{TraceyGeneration}. Let $r\ge 1$ be an integer. For $1\le i\le \ell$ we define the subgroup
$$B_{i}:=\langle b_{i,1},\dotsc,b_{i,r}\rangle,$$
where the generators $b_{i,1},\dotsc, b_{i,r}$ are defined recursively via
\begin{align*}
    \psi(b_{i,1})&=(1,\dotsc,1,b_{i,2})\\
    &\,\,\, \vdots\\
    \psi(b_{i,r-1})&=(1,\dotsc,1,b_{i,r})\\
    \psi(b_{i,r})&=(1,\overset{i-1}{\dotsc},1,x_i,1,\dotsc,1,b_{i,1}).
\end{align*}
Then we define the group $G_{H,r}:=\langle H, B_{i}\mid 1\le i\le \ell\rangle$, where abusing the notation by $H$ we refer to the subgroup of rooted automorphisms that act as the elements of $H$ on the first level $\{1,\dots,m\}$. The group $G_{H,r}$ can be seen as a generalization of spinal groups where each generator $x_i\in X$ appears in a different spine. The group $G_{H,r}$ satisfies similar properties to the ones satisfied by the \v{S}uni\'{c} groups in \cite{SunicHausdorff}. We shall only focus on the properties needed to prove \cref{lemma: existence of groups tending to 1}.

\begin{proposition}
\label{proposition: spinal is self-similar etc}
    For every $r\geq 1$, the group $G_{H,r}$ is self-similar, strongly fractal and level-transitive.
\end{proposition}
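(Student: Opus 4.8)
The plan is to verify the three properties in the order self-similarity, strong fractality, level-transitivity, since each step feeds the next. Throughout I work with the rooted copy of $H$ and the generators $b_{i,j}$ of the spinal subgroups $B_i$.

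For self-similarity it suffices, by the standard criterion together with the section relations for products and inverses, to check that every first-level section of every generator of $G_{H,r}$ lies in $G_{H,r}$. The rooted generators coming from $H$ have trivial first-level sections. Each $b_{i,j}$ lies in $\mathrm{St}(1)$, and its nontrivial first-level sections are read directly off the recursion: for $j<r$ the only nontrivial section is $b_{i,j+1}$ at vertex $m$, while $b_{i,r}$ has section $x_i$ at vertex $i$ and section $b_{i,1}$ at vertex $m$ (these sit at distinct vertices, since $i\le \ell\le m-1<m$). All of these sections lie in $G_{H,r}$, so $G_{H,r}$ is self-similar.

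The heart of the argument is strong fractality, i.e. $\varphi_v(\mathrm{St}_{G_{H,r}}(1))=G_{H,r}$ for every first-level vertex $v$. Since each $b_{i,j}$ fixes the first level and $\mathrm{St}(1)\trianglelefteq \mathrm{Aut}~T_m$, every conjugate $b_{i,j}^h$ with $h\in H$ again lies in $\mathrm{St}_{G_{H,r}}(1)$. As $h$ is rooted, the conjugation rule for sections collapses to $(b_{i,j}^h)|_v=b_{i,j}|_{(v)h^{-1}}$, so conjugating by a rooted permutation merely relabels the first-level vertex at which a prescribed section appears. Using transitivity of $H$, for each target $v$ I choose $h$ with $(i)h=v$, producing an element of $\mathrm{St}_{G_{H,r}}(1)$ with section $x_i$ at $v$; and choosing $h$ with $(m)h=v$ produces sections $b_{i,j+1}$ (from $b_{i,j}$ with $j<r$) and $b_{i,1}$ (from $b_{i,r}$) at $v$. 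Hence $\varphi_v(\mathrm{St}_{G_{H,r}}(1))$ contains every generator $x_i$ and $b_{i,j}$ of $G_{H,r}$; being a subgroup of $G_{H,r}$ by self-similarity, it equals $G_{H,r}$. Together with self-similarity this is precisely strong fractality.

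Finally, level-transitivity follows by induction on the level. The first level is covered by transitivity of $H$. For the inductive step, given $u=xu'$ and $v=yv'$ at level $n+1$, I first use level-$1$ transitivity to pick $g\in G_{H,r}$ with $(x)g=y$, which sends $u$ to some $yw$ with $w$ at level $n$; then, since by the inductive hypothesis $G_{H,r}$ acts transitively on level $n$ and $\varphi_y(\mathrm{St}_{G_{H,r}}(1))=G_{H,r}$ by strong fractality, I pick $h\in \mathrm{St}_{G_{H,r}}(1)$ with $h|_y$ mapping $w$ to $v'$, so that $gh$ sends $u$ to $v$. I expect strong fractality to be the only delicate point: the section computation for the conjugates $b_{i,j}^h$ and the bookkeeping ensuring that transitivity of $H$ places each required generator at each first-level vertex carry all the content, while self-similarity and level-transitivity are then essentially formal.
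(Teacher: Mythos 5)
Your proposal is correct and follows essentially the same route as the paper: self-similarity by inspecting the first-level sections of the generators, strong fractality by conjugating the $b_{i,j}$ with rooted elements of $H$ (using transitivity of $H$) to place each generator as a section at any first-level vertex, and level-transitivity from first-level transitivity together with fractality. The only cosmetic difference is that the paper first establishes $\varphi_m(\St_{G_{H,r}}(1))=G_{H,r}$ at the distinguished vertex $m$ (exploiting $\varphi_m(B_i)=B_i$) and then transfers to the other vertices by transitivity, whereas you handle each first-level vertex directly and also spell out the induction for level-transitivity that the paper leaves implicit.
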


\begin{proof}
    Since the sections of the generators are elements in $G_{H,r}$ the group $G_{H,r}$ is self-similar.
    Moreover, as $b_{i,j}\in\St_{G_{H,r}}(1)$ and $\varphi_m(B_i)=B_i$, we get $B_i\leq \varphi_m(\St_{G_{H,r}}(1))$ for $1\le i\le l$. On the other hand, conjugating $b_{i,r}$ with the appropriate element in $H$, we can obtain an element of $\St_{G_{H,r}}(1)$ with $x_i$ as a section in the vertex $m$. Thus, we also get $H\leq \varphi_m(\St_{G_{H,r}}(1))$ and we conclude that $\varphi_{m}(\St_{G_{H,r}}(1))=G_{H,r}$.  Since $H$ is transitive on $\{1,\dots, m\}$, it follows that $G_{H,r}$ is transitive on the first level, and thus $\varphi_{k}(\St_{G_{H,r}}(1))=G_{H,r}$ for $1\le k\le m$. This shows that $G_{H,r}$ is strongly fractal. Level-transitivity follows by being transitive on the first level and fractal.
\end{proof}

\begin{proposition}
\label{proposition: sn=0 for n at most r}
    The $(r+1)$st congruence quotient of $G_{H,r}$ is isomorphic to $H\wr\overset{r+1}{\dotsb}\wr H$. In particular $\mathfrak{s}_n=0$ for $n\le r$.
\end{proposition}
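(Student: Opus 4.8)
The plan is to prove the first assertion, that the $(r+1)$st congruence quotient of $G_{H,r}$ equals the full iterated wreath product $Q:=H\wr\overset{r+1}{\dotsb}\wr H$; the claim about $\mathfrak{s}_n$ will then drop out by comparison with $W_H$. Since $G_{H,r}\le W_H$, the inclusion induces an injection $G_{H,r}/\mathrm{St}_{G_{H,r}}(r+1)\hookrightarrow W_H/\mathrm{St}_{W_H}(r+1)=Q$, so it suffices to prove that the natural projection $\pi\colon W_H\to Q$ restricts to a \emph{surjection} on $G_{H,r}$.

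I would establish surjectivity layer by layer. The group $Q$ carries the normal series with successive quotients $\mathrm{St}_{W_H}(k)/\mathrm{St}_{W_H}(k+1)\cong H^{m^k}$ for $0\le k\le r$, where the isomorphism records the labels $g|_v^1\in H$ at the $m^k$ vertices $v$ of level $k$. By a standard descending induction along this series it is enough to show that for every $0\le k\le r$ the image of $\mathrm{St}_{G_{H,r}}(k)$ already hits the whole layer $H^{m^k}$. The case $k=0$ is immediate from the rooted copy of $H$ in $G_{H,r}$. For $1\le k\le r$ I would produce, for each level-$k$ vertex $v$ and each generator $x_i$, an element of $\mathrm{St}_{G_{H,r}}(k)$ whose image in $H^{m^k}$ is $x_i$ in the coordinate $v$ and trivial elsewhere; such elements generate $H^{m^k}$, so the layer is covered.

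The seed elements come from the $b_{i,j}$. Tracking sections down the spine $m, mm,\dots$ gives $b_{i,j}|_{m^{r-j}}=b_{i,r}$, and since $\psi(b_{i,r})$ has $x_i$ in coordinate $i$ and $b_{i,1}$ in coordinate $m$, the automorphism $b_{i,r-k+1}$ is confined to the spine up to level $k$: it has trivial label at every vertex of levels $<k$ and exactly one nontrivial label on level $k$, namely $x_i$ at the vertex $\underbrace{m\cdots m}_{k-1}i$. In particular $b_{i,r-k+1}\in\mathrm{St}_{G_{H,r}}(k)$, and modulo $\mathrm{St}(k+1)$ its image is $x_i$ at that single level-$k$ coordinate. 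To transport this seed to an arbitrary level-$k$ vertex $v$ without altering the label, I would conjugate by a suitable $t\in G_{H,r}$: by level-transitivity (\cref{proposition: spinal is self-similar etc}) some $t$ sends $\underbrace{m\cdots m}_{k-1}i$ to $v$, and since $G_{H,r}$ is strongly fractal, hence fractal, we may pre-multiply $t$ by an element of $\mathrm{st}_{G_{H,r}}(\underbrace{m\cdots m}_{k-1}i)$ so that its label at the source vertex is trivial. The section formulas then give that the conjugate has label $x_i$ at $v$ and trivial label at every other level-$k$ vertex, as required.

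Finally, the surjectivity just proved yields $G_{H,r}/\mathrm{St}_{G_{H,r}}(n)\cong W_H/\mathrm{St}_{W_H}(n)$ for all $n\le r+1$, so the indices $|\mathrm{St}_{G_{H,r}}(n-1):\mathrm{St}_{G_{H,r}}(n)|$ agree with those of $W_H$ in this range; as each layer of $W_H$ has order $|H|^{m^k}$, a direct computation gives $\mathfrak{s}_n=0$ for $W_H$, whence $\mathfrak{s}_n=0$ for $G_{H,r}$ whenever $n\le r$. I expect the main obstacle to be the bookkeeping in the third paragraph: one must check both that $b_{i,r-k+1}$ carries no stray labels on levels below $k$ (the confinement to the spine) and that the conjugation neither corrupts the label $x_i$ at $v$ nor introduces labels at the remaining level-$k$ coordinates — the latter is precisely what dictates choosing a conjugator with trivial label at the source vertex.
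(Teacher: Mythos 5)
Your proposal is correct and follows essentially the same route as the paper: the paper's proof consists precisely of the assertion that the generators of $G_{H,r}$ map onto a generating set of $H\wr\overset{r+1}{\dotsb}\wr H$ under the quotient by the $(r+1)$st level stabilizer, which is exactly the surjectivity you establish. Your layer-by-layer induction with the elements $b_{i,r-k+1}$ and the fractality-corrected conjugators simply supplies the verification that the paper leaves implicit.
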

\begin{proof}
    It follows from the fact that the generators of $G_{H,r}$ give a generating set of $H\wr\overset{r+1}{\dotsb}\wr H$ under the epimorphism
    \begin{align*}
        G_{H,r}&\twoheadrightarrow \frac{G_{H,r}}{\mathrm{St}_{G_{H,r}}(r+1)}\overset{\cong}{\to} H\wr\overset{r+1}{\dotsb}\wr H.\qedhere
    \end{align*}
\end{proof}

Contrary to \v{S}uni\'{c} groups, the groups $G_{H,r}$ do not always contain an $m$-cycle as a rooted automorphism, which complicates finding a branch structure. Therefore, we shall distinguish two cases: when $H$ contains an element having a long enough cycle in its cycle decomposition and when it does not. We see how to find a branch structure in the first case and reduce the second case to the first one later.

Let us assume that $H$ contains an element $\sigma$ whose cycle decomposition contains a cycle of length $c$, where $ 2\ell+1\le c \le m$. Without loss of generality let us assume it is the cycle $(m\, 1\, 2\,\dotsb\, c-1)$, otherwise we would change $\{B_{i}\}_{1\le i\le \ell}$ accordingly.

\begin{proposition}
\label{proposition: sunic branch over commutator}
    Let $H\leq Sym(m)$ be such that $(m\, 1\, 2\,\dotsb\, c-1)\in H$. Then the group $G_{H,r}$ is regular branch over $G_{H,r}'$. 
\end{proposition}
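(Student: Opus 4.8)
The plan is to verify the two defining conditions of a regular branch structure for the pair $(G_{H,r},G_{H,r}')$: that $G_{H,r}'$ has finite index in $G_{H,r}$, and that it is \emph{branching}, i.e. $\psi(\mathrm{St}_{G_{H,r}'}(1))\ge G_{H,r}'\times\overset{m}{\dotsb}\times G_{H,r}'$. Self-similarity, strong fractality and level-transitivity are already available from \cref{proposition: spinal is self-similar etc}. Write $G:=G_{H,r}$, let $S=\{x_i\}_{1\le i\le \ell}\cup\{b_{i,j}\}_{1\le i\le\ell,\,1\le j\le r}$ be the generating set, and let $o_i$ be the order of $x_i$ in $H$.

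For finite index I would first show that every generator $b_{i,j}$ has finite order, so that $G^{\mathrm{ab}}$ is a finitely generated abelian group generated by torsion elements, hence finite, giving $[G:G']<\infty$. It suffices to prove $b_{i,r}^{o_i}=1$: indeed $b_{i,r}^{o_i}$ fixes the first level and, since $x_i^{o_i}=1$, its only possibly nontrivial section is $b_{i,1}^{o_i}$ at coordinate $m$; iterating $\psi$ along the rightmost ray $m\,m\,m\dotsb$ shows $b_{i,r}^{o_i}$ is supported on that ray with every section lying in $\mathrm{St}(1)$, so all its labels vanish and $b_{i,r}^{o_i}=1$. Back-substituting through $\psi(b_{i,j})=(1,\dots,1,b_{i,j+1})$ then yields $b_{i,j}^{o_i}=1$ for all $j$.

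For the branching condition, set $E:=\{g\in G:(g,1,\dots,1)\in\psi(\mathrm{St}_{G'}(1))\}$. Using that $G'\trianglelefteq G$ and $\mathrm{St}(1)\trianglelefteq G$, together with strong fractality (which provides, for each $x\in G$, some $y\in\mathrm{St}_G(1)$ with $y|_1=x$), the identity $\psi(h^y)=\psi(h)^{\psi(y)}$ shows that $E$ is a normal subgroup of $G$. By level-transitivity, conjugation by rooted elements of $G$ permutes the $m$ coordinates transitively and preserves $\psi(\mathrm{St}_{G'}(1))$, so the branching condition is equivalent to $G'\subseteq E$. Since $G'=\langle[s,t]\mid s,t\in S\rangle^G$ and $E\trianglelefteq G$, it is enough to prove $[s,t]\in E$ for every pair $s,t\in S$.

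The main work, and the step I expect to be the real obstacle, is realizing each such commutator as an isolated first-level section by an element of $G'$, i.e. producing for every $s,t\in S$ an element of $\mathrm{St}_{G'}(1)$ with $\psi$-image $([s,t],1,\dots,1)$. Here I would use the rooted cycle $\sigma=(m\,1\,2\,\dotsb\,c-1)\in H\le G$ to move sections between coordinates: the $\psi$-image of $b_{i,j}^{\sigma^{t}}$ carries its sections (namely $b_{i,j+1}$ for $j<r$, and both $x_i$ and $b_{i,1}$ for $j=r$) at the $\sigma^{t}$-translates of their original coordinates. Since the sections furnished by elements of $S$ are again exactly the elements of $S$, every pair $s,t$ arises as a pair of sections of two suitable generators $U,V$; forming $[U^{\sigma^{t_1}},V^{\sigma^{t_2}}]\in\mathrm{St}_{G'}(1)$, whose $\psi$-image is the coordinatewise commutator, I would choose $t_1,t_2$ so that the coordinates carrying $s$ and $t$ coincide while all remaining marked coordinates (at most four, all inside the size-$c$ support of $\sigma$) stay pairwise distinct. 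This single-overlap isolation is precisely where the hypothesis $2\ell+1\le c$ enters: the relevant coordinate differences are of the form $\pm i$ and $\pm(i\pm k)$ with $1\le i,k\le\ell$, hence lie strictly between $-c$ and $c$, so they are nonzero modulo $c$ except when they pair two identical sections, in which case the contribution is trivial. Finally, normality of $E$ and level-transitivity propagate these isolated sections to all coordinates, giving $G'\times\overset{m}{\dotsb}\times G'\le\psi(\mathrm{St}_{G'}(1))$ and finishing the proof; the delicate point is the case-by-case bookkeeping (rooted--rooted, rooted--spinal, spinal--spinal) confirming that a valid choice of $t_1,t_2$ always isolates a single overlap.
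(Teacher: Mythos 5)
Your proposal is correct and follows essentially the same route as the paper: finite index of $G_{H,r}'$ via torsion generators and finite abelianization, reduction of the branching condition to showing that each commutator of generators appears as an isolated first-level section of an element of $G_{H,r}'$, and then the same cycle-conjugation computation in which the hypothesis $2\ell+1\le c$ rules out spurious overlaps. The only (harmless) differences are that you prove in-line two ingredients the paper outsources, namely the torsion of the spinal generators (asserted without proof there) and the reduction lemma itself, for which the paper cites \cite[Proposition~2.18]{GGS} together with the remark that its proof works for strongly fractal level-transitive groups.
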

\begin{proof}
    Since $G_{H,r}$ is generated by finitely many torsion elements, $G_{H,r}$ has finite abelianization. Therefore, as $G_{H,r}$ is self-similar and level-transitive by \cref{proposition: spinal is self-similar etc}, it is enough to prove $G_{H,r}'$ is a branching subgroup. Although \cite[Proposition~2.18]{GGS},  is stated for GGS-groups,   the proof works for every strongly fractal level-transitive group, and so, it is enough to prove that for any pair of generators $g_1,g_2\in G_{H,r}$ we have  $[g_1,g_2]*m\in G_{H,r}'$. For $1\le i,k\le \ell$ and $1\le j,t\le r$ it is clear that
    $$[b_{i,j},b_{k,t}]*m=[b_{i,j-1},b_{k,t-1}]\in G_{H,r}',$$
    where $j-1$ and $t-1$ are taken modulo $r$. Now, let $h\in G_{H,r}$ be such that $h$ is rooted and acts on the $h$-orbit of the vertex $m$ as the cycle $(m\,1\,2\,\dotsb\, c-1)$. Then for any $1\le i,k\le \ell$ and $1\le j\le r$ we have
    $$[b_{i,j},x_k]*m=[b_{i,j-1},b_{k,r}^{h^{-k}}]\in G_{H,r}',$$
    as $i+k\le 2\ell<c$ by assumption so there is no overlapping at the other components, where if $j=1$ we take $j-1=r$. Lastly, for $1\le i<k\le \ell$ we have
    \begin{align*}
        [x_i,x_k]*m&=[b_{i,r}^{h^{-i}},b_{k,r}^{h^{-k}}]\in G_{H,r}.\qedhere
    \end{align*}     
\end{proof}

\begin{proposition}\label{proposition: sunic H-dim}
    The Hausdorff dimension of the closure of $G_{H,r}$ in $W_H$ is at least 
    $$1-\frac{r+1}{m^{r-1}}.$$ 
    In particular, the above value tends to 1 as $r$ tends to infinity. 
\end{proposition}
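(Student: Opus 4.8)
The plan is to feed the structural properties of $G_{H,r}$ established above into the dimension formula of \cref{thm: Jorge_strong}, which applies because $G_{H,r}$ is self-similar by \cref{proposition: spinal is self-similar etc} and $G_{H,r}\le W_H$. Writing $G:=G_{H,r}$, I first note that the elements $b_{i,j}$ lie in $\mathrm{St}_{G}(1)$ while the rooted generators act on the first level exactly as the generators of $H$, so the first congruence quotient is $H$ and $\log|G:\mathrm{St}_{G}(1)|=\log|H|$. The formula then becomes $\mathrm{hdim}_{W_H}^\mathcal{S}(\overline{G})=1-\frac{1}{\log|H|}S_{G}(1/m)$, so it suffices to prove the upper bound $S_{G}(1/m)\le (r+1)\log|H|/m^{r-1}$.

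Next I would truncate and telescope the generating series. By \cref{proposition: sn=0 for n at most r} we have $\mathfrak{s}_n=0$ for every $n\le r$, hence $S_{G}(1/m)=\sum_{n\ge r+1}\mathfrak{s}_n m^{-n}$. Since $G$ is self-similar, the sequence $\{\mathfrak{r}_n\}_{n\ge 1}$ is non-negative and non-decreasing, so its forward gradient satisfies $\mathfrak{s}_n=\mathfrak{r}_{n+1}-\mathfrak{r}_n\ge 0$; moreover $\mathfrak{r}_1=0$ together with $\mathfrak{s}_n=0$ for $n\le r$ forces $\mathfrak{r}_{r+1}=0$. Hence the partial sums telescope as $\sum_{n=r+1}^{N}\mathfrak{s}_n=\mathfrak{r}_{N+1}$, and by the inequality $\mathfrak{r}_n\le \log|G\times\overset{m}{\dotsb}\times G:\psi(\mathrm{St}_{G}(1))|=:C$ valid for self-similar groups (recalled after \cref{thm: Jorge_strong}), every such partial sum is at most $C$. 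Pulling out the largest factor $m^{-(r+1)}$ then gives $S_{G}(1/m)\le m^{-(r+1)}\sum_{n\ge r+1}\mathfrak{s}_n\le m^{-(r+1)}C$.

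It remains to bound $C$. Since $G$ is regular branch over $G'$ by \cref{proposition: sunic branch over commutator}, the branching condition yields $G'\times\overset{m}{\dotsb}\times G'\le \psi(\mathrm{St}_{G'}(1))\le \psi(\mathrm{St}_{G}(1))$, whence $C\le \log|G\times\overset{m}{\dotsb}\times G:G'\times\overset{m}{\dotsb}\times G'|=m\log|G:G'|=m\log|G^{\mathrm{ab}}|$. Now $G$ is generated by the $\ell$ rooted generators $x_i$ and the $\ell r$ elements $b_{i,j}$, a total of $\ell(r+1)\le (m-1)(r+1)$ generators using $\ell\le m-1$. A short computation with the recursion shows each $b_{i,j}$ has the same finite order as $x_i$, which divides $|H|$, so the image in $G^{\mathrm{ab}}$ of each generator has order dividing $|H|$ and $\log|G^{\mathrm{ab}}|\le (m-1)(r+1)\log|H|$. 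Combining, $S_{G}(1/m)\le m^{-(r+1)}\cdot m(m-1)(r+1)\log|H|=(m-1)(r+1)\log|H|\,m^{-r}\le (r+1)\log|H|\,m^{-(r-1)}$, which is exactly the required estimate; dividing by $\log|H|$ and subtracting from $1$ gives $\mathrm{hdim}_{W_H}^\mathcal{S}(\overline{G})\ge 1-\frac{r+1}{m^{r-1}}$, and letting $r\to\infty$ proves the final assertion.

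The step I expect to demand the most care is the control of $C$, and in particular the order computation for the spinal generators $b_{i,j}$: from $\psi(b_{i,j})=(1,\dots,1,b_{i,j+1})$ and $\psi(b_{i,r})=(1,\dots,x_i,\dots,1,b_{i,1})$ one must check that raising $b_{i,1}$ to the order of $x_i$ trivialises the decoration $x_i$ at every occurrence along the spine and hence returns the identity, giving $o(b_{i,j})=o(x_i)\mid |H|$. Everything else is bookkeeping, but this order bound is precisely what converts the crude estimate $C\le m\log|G^{\mathrm{ab}}|$ into the sharp constant; a weaker bound on $|G^{\mathrm{ab}}|$ would still force $\mathrm{hdim}_{W_H}^\mathcal{S}(\overline{G})\to 1$ as $r\to\infty$ but might not land exactly on $\frac{r+1}{m^{r-1}}$.
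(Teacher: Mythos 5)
Your proposal is correct and takes essentially the same route as the paper: both rest on \cref{thm: Jorge_strong}, the vanishing $\mathfrak{s}_n=0$ for $n\le r$ from \cref{proposition: sn=0 for n at most r}, and the chain $\mathfrak{r}_n\le\log|G_{H,r}\times\dotsb\times G_{H,r}:\psi(\mathrm{St}_{G_{H,r}}(1))|\le m\log|G_{H,r}:G_{H,r}'|\le m(m-1)(r+1)\log|H|$ coming from \cref{proposition: sunic branch over commutator}. The only (harmless) deviations are that you bound the tail by telescoping $\sum_{n\ge r+1}\mathfrak{s}_n=\lim_{N\to\infty}\mathfrak{r}_{N+1}\le C$ and extracting the factor $m^{-(r+1)}$, where the paper bounds each $\mathfrak{s}_n\le C$ termwise and sums the geometric series (your version is in fact marginally sharper), and that you spell out the generator-count and the order computation $o(b_{i,j})=o(x_i)\mid |H|$ behind $\log|G_{H,r}:G_{H,r}'|\le (m-1)(r+1)\log|H|$, a step the paper leaves implicit.
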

\begin{proof}
    By \cref{proposition: sn=0 for n at most r} we have $\mathfrak{s}_n=0$ for $n\le r$. Furthermore, for any $n\ge r+1$ we have
    \begin{align*}
        \mathfrak{s}_{n}=\mathfrak{r}_{n}&\le \log|G_{H,r}\times\dotsb\times G_{H,r}:\psi(\mathrm{St}_{G_{H,r}}(1))|\\
        &\le m\log|G_{H,r}:G_{H,r}'|\\
        &\le m(m-1)(r+1)\log|H|,
    \end{align*}
    by \cref{proposition: sunic branch over commutator}. Therefore, as $G_{H,r}$ is self-similar, by \cref{thm: Jorge_strong} we get
    \begin{align*}
        \mathrm{hdim}_{W_H}(\overline{G}_{H,r})&=\frac{1}{\log|H|}\big(\log|G_{H,r}:\mathrm{St}_{G_{H,r}}(1)|-S_{G_{H,r}}(1/m)\big)\\
        &=1-\frac{1}{\log|H|}\sum_{n\ge 1}\frac{\mathfrak{s}_n}{m^n}\\
         &\ge 1-m(m-1)(r+1)\sum_{n\geq r+1}\frac{1}{m^n}\\
         &= 1-\frac{m(m-1)(r+1)}{m^{r+1}}\sum_{n\ge 0}\frac{1}{m^n}\\
    &= 1-\frac{r+1}{m^{r-1}}.\qedhere
    \end{align*}
\end{proof}

Given the $m$-adic tree $T_m$, for any $n\ge 1$ the $m^n$-adic tree $T_{m^n}$ may be obtained from $T_m$ by \textit{deletion of levels}, i.e. by deleting the levels which are not multiples of~$n$. This deletion of levels induces a canonical embedding $\phi:\mathrm{Aut}~T_m\hookrightarrow \mathrm{Aut}~T_{m^n}$, which allows us to see a subgroup $G\le \mathrm{Aut}~T_m$ as a subgroup of $\mathrm{Aut}~T_{m^n}$. This embedding yields a canonical isomorphism $W_H\overset{\cong}{\to} \phi(W_H)=W_{H\wr\overset{n}{\dotsb}\wr H}$ for any $n\ge 1$ and any $H\le \mathrm{Sym}(m)$. Note that the monomorphism $\phi$ commutes with taking sections at the non-deleted levels, i.e. $\phi(g)_u=\phi(g_u)$ for every $g\in \mathrm{Aut}~T_m$ and every $u\in T_{m^n}$, where we identify $u\in T_{m^n}$ with the corresponding vertex in $T_m$ before deleting the levels.

\begin{definition}
    We define the self-similar closure of a group $G\leq \Aut T_m$, and denote it by $G_\mathrm{SS}$, as the smallest self-similar subgroup of $\mathrm{Aut}~T_m$ containing $G$. Namely
    $$G_\mathrm{SS}=\langle g_u \mid g\in G, u\in\mathrm{Aut}~ T_m\rangle.$$
\end{definition}

The following lemmata will be key to reduce the general case in \cref{lemma: existence of groups tending to 1} to the one considered in \cref{proposition: sunic branch over commutator}. We consider a finitely generated self-similar group $G\le  W_{H\wr\overset{n}{\dotsb}\wr H}\le \mathrm{Aut}~T_{m^n}$ for some $n\ge 1$. Then, we show that taking the self-similar closure of $\phi^{-1}(G)$ in $W_H\le \mathrm{Aut}~T_m$ preserves finite generation and does not decrease the Hausdorff dimension of the respective topological closure. The proof relies heavily on the strongness of the Hausdorff dimension of self-similar profinite groups given by \cref{thm: Jorge_strong}.

\begin{lemma}
    \label{lemma: self-similar closure trick finite generation}
    Let $H\le \mathrm{Sym}(m)$ and let $G\le W_{H\wr\overset{n}{\dotsb}\wr H}$ $\leq \Aut T_{m^n}$ be a finitely generated self-similar group. Then the self-similar closure of $\phi^{-1}(G)$ in $W_H\leq \Aut T_{m}$ is finitely generated and 
    $$d(\phi^{-1}(G)_\mathrm{SS})\le \frac{m^{n}-1}{m-1}d(G).$$
\end{lemma}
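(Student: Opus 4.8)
The plan is to exhibit an explicit finite generating set for $\phi^{-1}(G)_{\mathrm{SS}}$ of the asserted size. Write $d:=d(G)$ and fix generators $g_1,\dots,g_d$ of $G$. Since $G\le W_{H\wr\overset{n}{\dotsb}\wr H}=\phi(W_H)$, the preimages $h_i:=\phi^{-1}(g_i)\in W_H$ are well defined and generate $\phi^{-1}(G)$. As $W_H$ is itself self-similar, the self-similar closure of $\phi^{-1}(G)$ taken in $W_H$ coincides with the one taken in $\mathrm{Aut}~T_m$, so I may work with the latter. I would then consider the finite set
$$S:=\{\,h_i|_v\mid 1\le i\le d,\ v\in T_m,\ 0\le d(v)\le n-1\,\},$$
consisting of the sections of the generators at the vertices lying in the first $n$ levels $0,1,\dots,n-1$ of $T_m$, and claim that $\langle S\rangle=\phi^{-1}(G)_{\mathrm{SS}}$.

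One inclusion is immediate: every element of $S$ is a section of an element of $\phi^{-1}(G)$, so $S\subseteq\phi^{-1}(G)_{\mathrm{SS}}$ and hence $\langle S\rangle\le\phi^{-1}(G)_{\mathrm{SS}}$. For the reverse inclusion, since $\phi^{-1}(G)_{\mathrm{SS}}$ is by definition the smallest self-similar subgroup containing $\phi^{-1}(G)=\langle h_1,\dots,h_d\rangle\le\langle S\rangle$, it suffices to prove that $\langle S\rangle$ is self-similar. Using the section-of-a-product and section-of-an-inverse relations recalled in \cref{section: Preliminaries}, closure under sections reduces to verifying that $h_i|_w\in\langle S\rangle$ for every $i$ and every vertex $w\in T_m$.

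The heart of the argument is this last verification, and it is where the block structure of $\phi$ enters. Given $w$, I would write $d(w)=qn+s$ with $0\le s\le n-1$ and split $w=v'v$ with $d(v')=qn$ and $d(v)=s$, so that $h_i|_w=(h_i|_{v'})|_v$. Since $v'$ sits at a level that is a multiple of $n$, i.e. a non-deleted level, the fact that $\phi$ commutes with sections at non-deleted levels gives $h_i|_{v'}=\phi^{-1}(g_i)|_{v'}=\phi^{-1}(g_i|_{\bar v'})$, where $\bar v'$ is the vertex of $T_{m^n}$ corresponding to $v'$. By self-similarity of $G$ in $\mathrm{Aut}~T_{m^n}$ we have $g_i|_{\bar v'}\in G$, so $g_i|_{\bar v'}$ is a word in $g_1,\dots,g_d$ and their inverses; applying the homomorphism $\phi^{-1}$, the element $h_i|_{v'}$ is the \emph{same} word in $h_1,\dots,h_d$. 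Finally, taking the section of this word at the vertex $v$ of level $s\le n-1$ and expanding via the section-of-a-product rule writes $h_i|_w=(h_i|_{v'})|_v$ as a product of terms $(h_j|_{v''})^{\pm 1}$ with $d(v'')=s\le n-1$; every such term lies in $S$ or is the inverse of an element of $S$, whence $h_i|_w\in\langle S\rangle$, as required.

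This proves $\langle S\rangle=\phi^{-1}(G)_{\mathrm{SS}}$, so in particular the self-similar closure is finitely generated. Counting generators, the number of vertices at levels $0,\dots,n-1$ of $T_m$ equals $\sum_{\ell=0}^{n-1}m^\ell=\frac{m^n-1}{m-1}$, whence $|S|\le\frac{m^n-1}{m-1}\,d(G)$ and the displayed bound follows. I expect the only genuinely delicate point to be the bookkeeping in the last step: one must keep track that the sections of a word are all taken at vertices of the \emph{same} level $s$, so that they never escape the first $n$ levels, and that the $\phi$-section-commutation is invoked exactly at the multiple-of-$n$ level $qn$; everything else is formal.
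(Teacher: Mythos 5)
Your proof is correct and takes essentially the same approach as the paper: both exhibit the identical generating set (the sections of the generators of $\phi^{-1}(G)$ at all vertices of the first $n$ levels of $T_m$), prove both containments with the self-similar closure, and establish self-similarity of the candidate group using the commutation of $\phi$ with sections at non-deleted levels together with the self-similarity of $G$. The only difference is organizational: the paper verifies self-similarity by checking first-level sections $(\phi^{-1}(s)|_u)|_x$ with the boundary case $u\in\mathcal{L}_{n-1}$, whereas you handle an arbitrary vertex in one step via the decomposition $w=v'v$ with $d(v')=qn$ — the same idea packaged differently.
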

\begin{proof}
It suffices to show that if $G=\langle S\rangle$ with $|S|=d(G)$, then $\phi^{-1}(G)_\mathrm{SS}=\widetilde{G}$, where
$$\widetilde{G}:=\langle \phi^{-1}(s)|_u\mid s\in S \text{ and }u\in T_m^{[n-1]}\rangle.$$
Clearly, the generating set of $\widetilde{G}$ is a subset of the generating set of $\phi^{-1}(G)_\mathrm{SS}$, so $\widetilde{G}\leq \phi^{-1}(G)_\mathrm{SS}.$ 
On the other hand
$$\phi^{-1}(G)=\langle \phi^{-1}(s)\mid s\in S\rangle=\langle \phi^{-1}(s)|_{\emptyset}\mid s\in S\rangle,$$
so the generating set of $\phi^{-1}(G)$ is a subset of the generating set of $\widetilde{G}$, showing that $\phi^{-1}(G)\leq \widetilde{G}$. Now, it suffices to justify that $\widetilde{G}$ is self-similar, as then by minimality of the self-similar closure we obtain that $\phi^{-1}(G)_{SS}\leq \widetilde{G}$ which shows the desired equality.
To conclude that $\widetilde{G}$ is self-similar it is enough to check that $(\phi^{-1}(s)|_u)|_x\in \widetilde{G}$ for every $x\in\{1,\dotsc,m\}$ and $u\in T_m^{[n-1]}$. The result is clearly true if $u\in T_m^{[n-2]}$. If $u\in \mathcal{L}_{n-1}$, then $s|_{ux}=g$ for some $g\in G$ by self-similarity of $G$. But now, as $G$ is generated by $S$, we get 
\begin{align*}
    (\phi^{-1}(s)|_u)|_x&=\phi^{-1}(s|_{ux})=\phi^{-1}(g)\in\langle \phi^{-1}(s)\mid s\in S\rangle =\phi^{-1}(G)\leq\widetilde{G}.\qedhere
\end{align*}
\end{proof}

\begin{lemma}
\label{lemma: self-similar closure trick H-dim}
    Let $H\le \mathrm{Sym}(m)$ be a transitive group and let $G\le W_{H\wr\overset{n}{\dotsb}\wr H}$ $\leq \Aut T_{m^n}$. Then 
    $$\mathrm{hdim}_{W_H}^{\mathcal{S}}(\overline{\phi^{-1}(G)_{\mathrm{SS}}})\ge \mathrm{hdim}_{W_{H\wr\overset{n}{\dotsb}\wr H}}^{\mathcal{S}}(\overline{G_{\mathrm{SS}}}).$$
\end{lemma}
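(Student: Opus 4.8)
The plan is to reduce everything to the self-similar group $\widehat{G}:=\phi^{-1}(G)_{\mathrm{SS}}\le W_H$ and to exploit the \emph{strongness} of the Hausdorff dimension of self-similar groups provided by \cref{thm: Jorge_strong}. First I would verify that $\phi(\widehat{G})$ is itself self-similar in $\mathrm{Aut}~T_{m^n}$. Indeed, the deleted-level embedding $\phi$ commutes with sections at the non-deleted levels, so for every $g\in\widehat{G}$ and every vertex $u\in T_{m^n}$ we have $\phi(g)|_u=\phi(g|_u)$, and $g|_u\in\widehat{G}$ by self-similarity of $\widehat{G}$; hence $\phi(g)|_u\in\phi(\widehat{G})$. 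Since $\phi(\widehat{G})$ also contains $G=\phi(\phi^{-1}(G))$, the minimality of the self-similar closure yields $G_{\mathrm{SS}}\le\phi(\widehat{G})$. By monotonicity of the Hausdorff dimension it then suffices to prove
$$\mathrm{hdim}^{\mathcal{S}}_{W_H}(\overline{\widehat{G}})\ \ge\ \mathrm{hdim}^{\mathcal{S}}_{W_{H\wr\overset{n}{\dotsb}\wr H}}(\overline{\phi(\widehat{G})}),$$
and in fact I expect equality here.

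Next I would translate the level-stabilizer data through $\phi$. A vertex at level $k$ of $T_{m^n}$ is exactly a non-deleted vertex at level $nk$ of $T_m$, so $\phi(g)$ fixes the $k$th level of $T_{m^n}$ if and only if $g$ fixes the $nk$th level of $T_m$. Consequently $\mathrm{St}_{\phi(\widehat{G})}(k)=\phi(\mathrm{St}_{\widehat{G}}(nk))$ and likewise $\mathrm{St}_{W_{H\wr\overset{n}{\dotsb}\wr H}}(k)=\phi(\mathrm{St}_{W_H}(nk))$. As $\phi$ is a topological isomorphism onto the closed subgroup $W_{H\wr\overset{n}{\dotsb}\wr H}$ (so that it carries $\overline{\widehat{G}}$ onto $\overline{\phi(\widehat{G})}$ and preserves all the relevant indices), this gives
$$\mathrm{hdim}^{\mathcal{S}}_{W_{H\wr\overset{n}{\dotsb}\wr H}}(\overline{\phi(\widehat{G})})=\liminf_{k\to\infty}\frac{\log|\widehat{G}:\mathrm{St}_{\widehat{G}}(nk)|}{\log|W_H:\mathrm{St}_{W_H}(nk)|}.$$
These are precisely the ratios defining $\mathrm{hdim}^{\mathcal{S}}_{W_H}(\overline{\widehat{G}})$, but restricted to the subsequence of levels that are multiples of $n$.

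The crux — and the step that makes the inequality go in the right direction — is that $\widehat{G}$ is self-similar, so by \cref{thm: Jorge_strong} its Hausdorff dimension in $W_H$ is strong; that is, the defining $\liminf$ is a genuine limit. A genuine limit is unchanged upon passing to the subsequence $\{nk\}_{k\ge 1}$, so the displayed $\liminf$ equals $\mathrm{hdim}^{\mathcal{S}}_{W_H}(\overline{\widehat{G}})$. Combining this with the monotonicity step gives
$$\mathrm{hdim}^{\mathcal{S}}_{W_H}(\overline{\widehat{G}})=\mathrm{hdim}^{\mathcal{S}}_{W_{H\wr\overset{n}{\dotsb}\wr H}}(\overline{\phi(\widehat{G})})\ \ge\ \mathrm{hdim}^{\mathcal{S}}_{W_{H\wr\overset{n}{\dotsb}\wr H}}(\overline{G_{\mathrm{SS}}}),$$
which is exactly the claim. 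I would stress that strongness is genuinely needed: without it, passing to a subsequence only yields the opposite (and useless) inequality, so invoking \cref{thm: Jorge_strong} is the heart of the argument, while the remaining ingredients are just monotonicity and the bookkeeping identifying level $k$ of $T_{m^n}$ with level $nk$ of $T_m$.
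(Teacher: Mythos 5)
Your proof is correct, and its engine is the same as the paper's: identify level $k$ of $T_{m^n}$ with level $nk$ of $T_m$ under $\phi$, and invoke the strongness of the Hausdorff dimension of self-similar groups (\cref{thm: Jorge_strong}) so that passing to the subsequence of levels divisible by $n$ costs nothing; you are also right that this is exactly where strongness is indispensable. The surrounding arrangement, however, differs. The paper assumes without loss of generality that $G$ is self-similar, observes the pointwise inequality
\[
\frac{\log|\phi^{-1}(G)_{\mathrm{SS}}:\mathrm{St}_{\phi^{-1}(G)_{\mathrm{SS}}}(nk)|}{\log|W_H:\mathrm{St}_{W_H}(nk)|}\ \ge\ \frac{\log|G:\mathrm{St}_{G}(k)|}{\log|W_{H\wr\overset{n}{\dotsb}\wr H}:\mathrm{St}_{W_{H\wr\overset{n}{\dotsb}\wr H}}(k)|}
\]
coming from $\phi^{-1}(G)\le \phi^{-1}(G)_{\mathrm{SS}}$, and applies strongness twice (to $\overline{G}$ in $W_{H\wr\overset{n}{\dotsb}\wr H}$ and to $\overline{\phi^{-1}(G)_{\mathrm{SS}}}$ in $W_H$). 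You instead push forward: you check that $\phi(\phi^{-1}(G)_{\mathrm{SS}})$ is self-similar in $\mathrm{Aut}~T_{m^n}$, deduce $G_{\mathrm{SS}}\le \phi(\phi^{-1}(G)_{\mathrm{SS}})$ by minimality, prove the exact equality $\mathrm{hdim}_{W_H}^{\mathcal{S}}(\overline{\phi^{-1}(G)_{\mathrm{SS}}})=\mathrm{hdim}_{W_{H\wr\overset{n}{\dotsb}\wr H}}^{\mathcal{S}}(\overline{\phi(\phi^{-1}(G)_{\mathrm{SS}})})$ with a single application of strongness, and finish by monotonicity. What your version buys: it treats a non-self-similar $G$ directly, whereas the paper's reduction tacitly uses that replacing $G$ by $G_{\mathrm{SS}}$ does not change $\phi^{-1}(G)_{\mathrm{SS}}$ (true, since $\phi^{-1}(G_{\mathrm{SS}})_{\mathrm{SS}}=\phi^{-1}(G)_{\mathrm{SS}}$ by the section-commuting property of $\phi$, but this is an extra check the paper glosses over); and it isolates a cleaner intermediate statement. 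What it costs: the small verifications that $\phi(\phi^{-1}(G)_{\mathrm{SS}})$ is self-similar and that $\phi$ restricts to a topological isomorphism of $W_H$ onto $W_{H\wr\overset{n}{\dotsb}\wr H}$ carrying closures to closures and preserving the relevant indices --- all immediate from the paper's remarks on $\phi$, but they do need to be said, as you did.
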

\begin{proof}
Let us assume without loss of generality that $G$ is self-similar to simplify notation. Then $\overline{G}$ has strong Hausdorff dimension in $W_{H\wr\overset{n}{\dotsb}\wr H}$ by \cref{thm: Jorge_strong} and we get 
$$\mathrm{hdim}_{W_{H\wr\overset{n}{\dotsb}\wr H}}^\mathcal{S}(\overline{G})=\lim_{k\to \infty}\frac{\log|G:\mathrm{St}_{G}(k)|}{\log|W_{H\wr\overset{n}{\dotsb}\wr H}:\mathrm{St}_{W_{H\wr\overset{n}{\dotsb}\wr H}}(k)|}.$$
Note also that
$$\frac{\log|\phi^{-1}(G)_{\mathrm{SS}}:\mathrm{St}_{\phi^{-1}(G)_{\mathrm{SS}}}(nk)|}{\log|W_H:\mathrm{St}_{W_H}(nk)|}\ge \frac{\log|G:\mathrm{St}_{G}(k)|}{\log|W_{H\wr\overset{n}{\dotsb}\wr H}:\mathrm{St}_{W_{H\wr\overset{n}{\dotsb}\wr H}}(k)|}$$
for every $k\ge 1$. Furthermore $\overline{\phi^{-1}(G)_{\mathrm{SS}}}$ has strong Hausdorff dimension in $W_H$ again by self-similarity and \cref{thm: Jorge_strong}. Therefore the limit of any subsequence of the above logarithmic quotients coincides with the limit of the full sequence of logarithmic quotients, so we get
\begin{align*}
    \mathrm{hdim}_{W_H}^\mathcal{S}(\overline{\phi^{-1}(G)_{\mathrm{SS}}})&= \lim_{k\to\infty}\frac{\log|\phi^{-1}(G)_{\mathrm{SS}}:\mathrm{St}_{\phi^{-1}(G)_{\mathrm{SS}}}(nk)|}{\log|W_H:\mathrm{St}_{W_H}(nk)|}\\
    &\ge \lim_{k\to\infty}\frac{\log|G:\mathrm{St}_{G}(k)|}{\log|W_{H\wr\overset{n}{\dotsb}\wr H}:\mathrm{St}_{W_{H\wr\overset{n}{\dotsb}\wr H}}(k)|}\\
&=\mathrm{hdim}_{W_{H\wr\overset{n}{\dotsb}\wr H}}^\mathcal{S}(\overline{G}).\qedhere
\end{align*}
\end{proof}

\begin{proof}[Proof of \cref{lemma: existence of groups tending to 1}]
If $H$ contains an element $\sigma$ having a cycle of length $c\ge 2\ell +1$ the generalized \v{S}uni\'{c} groups $\{G_{H,r}\}_{r\ge 1}$ are self-similar and finitely generated satisfying both that $\lim_{r\to\infty }\mathrm{hdim}_{W_H}^\mathcal{S}(\overline{G}_r)=1$ and that $d(G_r)$ is linear on $r$. Therefore let us assume $H$ does not contain such an element. However, since $H$ is non-trivial it contains an element having a cycle of length at least 2 in its cycle decomposition. Now, the wreath product $H\wr\overset{n}{\dotsb}\wr H$ is transitive on the $n$th level of the tree and $d(H\wr\overset{n}{\dotsb}\wr H)\le nd(H)\le n(m-1)$ elements, i.e. its rank is linear on $n\ge 1$. However, the group $H\wr\overset{n}{\dotsb}\wr H$ contains an element having a cycle of length at least $2^n$ in its cycle decomposition. Therefore, for big enough $n$ we get that $H\wr\overset{n}{\dotsb}\wr H$ contains an element having a cycle of length at least $2\cdot d(H\wr\overset{n}{\dotsb}\wr H)+1$ in its cycle decomposition. Let us consider $n$ minimal satisfying this. We note that this minimal $n$ is a constant depending only on $H$. Now, by \cref{proposition: sunic H-dim} the family of self-similar subgroups $G_{H\wr\overset{n}{\dotsb}\wr H,r}\le W_{H\wr\overset{n}{\dotsb}\wr H}$ satisfies for every $r\ge 1$ that
$$\mathrm{hdim}_{W_{H\wr\overset{n}{\dotsb}\wr H}}^\mathcal{S}(\overline{G_{H\wr\overset{n}{\dotsb}\wr H,r}})\ge 1-\frac{r+1}{m^{n(r-1)}}.$$

Then by \cref{lemma: self-similar closure trick finite generation} and \cref{lemma: self-similar closure trick H-dim} we get a family of self-similar groups $\phi^{-1}(G_{H\wr\overset{n}{\dotsb}\wr H,r})_{\mathrm{SS}}\le W_H$ such that for every $r\ge 1$
\begin{align*}
    \mathrm{hdim}_{W_H}^\mathcal{S}(\overline{\phi^{-1}(G_{H\wr\overset{n}{\dotsb}\wr H,r})_{\mathrm{SS}}})&\ge \mathrm{hdim}_{W_{H\wr\overset{n}{\dotsb}\wr H}}^\mathcal{S}(\overline{G_{H\wr\overset{n}{\dotsb}\wr H,r}})\ge 1-\frac{r+1}{m^{nr-n}}
\end{align*}
and $d(\phi^{-1}(G_{H\wr\overset{n}{\dotsb}\wr H,r})_{\mathrm{SS}})\le \big((m^{n+1}-1)/(m-1)\big) d(G_{H\wr\overset{n}{\dotsb}\wr H,r})=Cr$ for some constant $C$ depending only on $H$.
\end{proof}

\subsection{Siegenthaler's construction}

Now we extend Siegenthaler's construction of a 1-dimensional finitely generated subgroup of $W_{2}$ in \cite{SiegenthalerBinary} to any iterated wreath product $W_H$ where $H\le \mathrm{Sym}(m)$ is any transitive subgroup. We follow the same approach as in \cite[Section 6]{SiegenthalerBinary}.

We first fix some notation. Let $H=\langle h_1,\dotsc, h_\ell\rangle$. We define recursively
$$\psi(a_j)=(a_j,1,\dotsc,1)h_{j}$$
for $1\le j\le \ell$. We shall consider $a_{n,j}$ the truncated version of $a_{j}$ at every level $n\ge 1$, i.e. 
$$\psi(a_{1,j})=(1,\dotsc,1)h_j$$
and
$$\psi(a_{n,j})=(a_{n-1,j},1,\dotsc,1)h_j$$
for $n\ge 2$. Let $r$ be such that $h_r$ moves the vertex $1$. A straightforward computation shows that the $a_{n,r}$-orbit of $1\overset{n}{\dotsb}1$ at the $n$-th level is of length $t_r^n$, where $t_r$ is the length of the $h_r$-orbit of $1$. Then for every $g_1,\dotsc,g_n\in W_H$ we define
$$\delta(g_1,\dotsc,g_n):=\prod_{i=0}^{n-1}g_{i+1}*\big((1\overset{n}{\dotsb}1)a_{n,1}^{{t_r^{i}}}\big).$$

\begin{lemma}
\label{lemma: Siegenthaler H' in rist}
    Let $G=\langle g_1,\dotsc,g_n\rangle\le W_H$ be any finitely generated subgroup and let us consider $K=\langle a_{n,j},\delta(g_1,\dotsc,g_n)\mid 1\le j\le \ell\rangle\le W_H$. Then $$G'*\mathcal{L}_n\le K'.$$
\end{lemma}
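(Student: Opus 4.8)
The plan is to show that $K'$ contains a rigid copy of $G'$ below every level-$n$ vertex, and then assemble these into $\prod_{v\in\mathcal{L}_n}G'*v=G'*\mathcal{L}_n$. Denote by $a$ the spinal generator whose powers appear in the definition of $\delta$, set $u_i:=(1\overset{n}{\dotsb}1)a^{t_r^i}$ for $0\le i\le n-1$, so that $\delta=\delta(g_1,\dots,g_n)=\prod_{i=0}^{n-1}g_{i+1}*u_i$ places $g_{i+1}$ rigidly below the level-$n$ vertex $u_i$, and the $u_i$ are $n$ distinct vertices lying on a single $a$-orbit of length $t_r^n$. I record two facts used throughout. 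First, every element of $A:=\langle a_{n,1},\dots,a_{n,\ell}\rangle$ is finitary of depth $n$, hence has trivial section at every level-$n$ vertex; by the section formulas this means conjugation by $w\in A$ merely relocates rigid level-$n$ elements, $(g*v)^w=g*(vw)$ and $(\delta^w)|_{v}=\delta|_{vw^{-1}}$ for $v\in\mathcal{L}_n$. Second, $A$ acts transitively on level $n$: its action on the first $n$ levels coincides with that of $\widetilde{A}=\langle a_1,\dots,a_\ell\rangle$, which is self-similar, transitive on level $1$ (there it is $H$), and satisfies $\varphi_1(\mathrm{st}_{\widetilde{A}}(1))=\widetilde{A}$ since $a_j^{t_j}|_1=a_j$; a self-similar, first-level-transitive, self-replicating group is level-transitive, exactly as in the reasoning of \cref{proposition: spinal is self-similar etc}.

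First I would localize commutators of the generators. Since $\delta,a\in K$, also $\delta^{a^s}\in K$ for every integer $s$, and $\delta^{a^s}=\prod_i g_{i+1}*(u_ia^s)$ by the first fact. Fixing $p\ne q$ and taking $s=t_r^p-t_r^q$, the supports of $\delta$ and $\delta^{a^s}$ overlap precisely where $t_r^i+s\equiv t_r^j\pmod{t_r^n}$; writing $t:=t_r$, the identity $t^a-t^b=t^c-t^d$ forces $(a,b)=(c,d)$ (compare $t$-adic valuations), so the unique overlap is at $u_p$, where $\delta$ contributes $g_{p+1}$ and $\delta^{a^s}$ contributes $g_{q+1}$. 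As the two elements have otherwise disjoint, hence commuting, support, this yields
\[
[\delta,\delta^{a^s}]=[g_{p+1},g_{q+1}]*u_p\in K'
\]
for all $p\ne q$.

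Next I would promote these generating commutators to all of $G'$ at a single vertex. The key point is that every generator $g_{i+1}$ is realizable as a section at $u_p$ of an element of $K$: choosing $w\in A$ with $u_iw=u_p$ (transitivity), the first fact gives $(\delta^w)|_{u_p}=\delta|_{u_i}=g_{i+1}$ while $\delta^w$ still fixes level $n$. Conjugating $[g_{p+1},g_{q+1}]*u_p$ by such $\delta^w$ therefore stays localized at $u_p$ and conjugates its section by $g_{i+1}$; iterating over words shows $[g_{p+1},g_{q+1}]^g*u_p\in K'$ for every $g\in G$. Relocating the contributions of the various base vertices $u_{p'}$ to a common $u_p$ by the first fact (trivial sections, so no extra conjugation) collects all $[g_a,g_b]^g*u_p$ with $a,b$ ranging over all indices; since $G'=\langle[g_a,g_b]^g\mid a,b,\,g\in G\rangle$, this gives $G'*u_p\le K'$. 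Finally, conjugating $G'*u_p$ by the elements of $A$ and using transitivity together with the trivial-section property spreads this copy to every level-$n$ vertex, yielding $G'*\mathcal{L}_n\le K'$. (The case $n=1$ is trivial since then $G$ is cyclic and $G'=1$.)

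The computational heart, and the step most prone to error, is the localization identity: one must guarantee that the chosen shift produces a unique support overlap, which is precisely the uniqueness of the representation of integers as differences of powers of $t_r$ — this is exactly where the recursive, base-$t_r$ design of the spine pays off, and it is what lets a single commutator $[\delta,\delta^{a^s}]$ collapse to one vertex. The second delicate point is recovering the full normal closure $G'$ rather than just the generating commutators; this is the "section-as-generator" trick of the third paragraph, which relies essentially on $A$ acting transitively on level $n$ with trivial sections there. I would therefore isolate and prove the transitivity of $A$ first, as it is the hypothesis underpinning the entire relocation and conjugation machinery.
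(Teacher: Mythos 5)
Your proof is correct and follows essentially the same route as the paper's: commutators of $a_{n,r}$-translates of $\delta$ whose supports overlap in exactly one level-$n$ vertex localize the generating commutators of $G'$, conjugation by further elements of $K$ (your $\delta^w$, the paper's ``appropriate element in $K$'') recovers the full normal closure, and transitivity of $\langle a_{n,1},\dotsc,a_{n,\ell}\rangle$ on the $n$th level together with its trivial sections there spreads the copy of $G'$ over all of $\mathcal{L}_n$. The differences are cosmetic — you shift one copy of $\delta$ and center the overlap at $u_p$ rather than shifting both copies to overlap at $1\overset{n}{\dotsb}1$ — plus you spell out correctly several details the paper asserts or delegates to the proof of \cref{lemma: construction of LI}, namely the uniqueness-of-overlap computation, the level-$n$ transitivity of $A$, and the degenerate case $n=1$.
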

\begin{proof}  
    We first note that
    $$\delta(g_1,\dotsc,g_n)^{a_{n,r}^{-t_r^s}}=\prod_{i=0}^{n-1}g_{i+1}*\big((1\overset{n}{\dotsb}1)a_{n,1}^{t_r^{i}-t_r^s}\big)$$
    for every $0\le s\le n-1$. Thus, arguing as in the proof of \cref{lemma: construction of LI}, we get
    $$[\delta(g_1,\dotsc,g_n)^{a_{n,r}^{-t_r^s}},\delta(g_1,\dotsc,g_n)^{a_{n,r}^{-t_r^u}}]=[g_{s+1},g_{u+1}]*(1\overset{n}{\dotsb}1)$$
    for any $0\le s,u\le n-1$. Now, for any $g\in G$ we may conjugate the above commutator with an appropriate element in $K$ as in the proof of \cref{lemma: construction of LI} to obtain
    $$[g_{s+1},g_{u+1}]^g*(1\overset{n}{\dotsb}1)\in K',$$
    i.e. we obtain
    $$G'*(1\overset{n}{\dotsb}1)\le K'$$
    as $G'=\langle [g_s,g_u]\mid 1\le s,u\le n\rangle^G$. Finally, since $H$ acts transitively on the first level the subgroup $\langle a_{n,j}\mid 1\le j\le \ell\rangle\le K$ acts transitively on the $n$th level and we obtain that
    \begin{align*}
        G'*\mathcal{L}_n&\le K'.\qedhere
    \end{align*}
\end{proof}

\begin{proof}[Proof of \cref{lemma: Siegenthaler generalization}]
    For each $r\ge 1$ we consider the family of groups $\{G_r\}_{r\ge 1}$ obtained in \cref{lemma: existence of groups tending to 1} and let us fix a set of generators  $G_{r}=\langle g_1,\dotsc, g_{d(G_r)}\rangle$. For each $r\ge 1$, we define the subgroup 
    $$K_r:=\langle a_{d(G_r)+\ell+1,j}, b_{r}\mid 1\le j\le \ell\rangle,$$
    where 
    $$b_r:=\delta(g_1,\dotsc, g_{d(G_r)}, a_{d(G_{r+1})+\ell+1,1},\dotsc,a_{d(G_{r+1})+\ell+1,\ell}, b_{r+1}).$$ 
    Then, by \cref{lemma: Siegenthaler H' in rist} we have
    \begin{align*}
        K_r'\ge K_{r+1}'*\mathcal{L}_{d(G_r)+\ell+1}\quad\text{and}\quad K_r'\ge G'_{H,r}*\mathcal{L}_{d(G_r)+\ell+1}
    \end{align*}
    for every $r\ge 1$. In particular
    \begin{align}
    \label{align: inequalities of siegenthaler}
        K_1\ge K_1'\ge G'_{H,r}*\mathcal{L}_{\sum_{i=1}^r(d(G_i)+\ell+1)}
    \end{align}
    for every $r\ge 1$. Finally since $G_{H,r}$ is self-similar for every $r\ge 1$, its closure in $W_H$ has strong Hausdorff dimension by \cref{thm: Jorge_strong}. Thus by \textcolor{teal}{(}\ref{align: inequalities of siegenthaler}\textcolor{teal}{)} and \cref{proposition: dimension is a series} we get
    \begin{align*}
        1\ge \mathrm{hdim}_{W_{H}}^\mathcal{S}(\overline{K}_1)&\ge \sup_{r\ge 2}\mathrm{hdim}_{W_{H}}^\mathcal{S}(\overline{G'_{H,r}})=\lim_{r\to\infty}\mathrm{hdim}_{W_{H}}^\mathcal{S}(\overline{G_{H,r}})=1,
    \end{align*}
    where the second inequality follows by the monotonicity of the Hausdorff dimension, the first equality by \cite[Proposition 3.1]{JorgeMikel}, and the second equality by \cref{lemma: existence of groups tending to 1}.
\end{proof}

\subsection{2-generated subgroups of $W_p$ with prescribed Hausdorff dimension}

To conclude the section, we prove  \cref{Corollary: Wp 2-bounded spectrum}, giving further evidence to a conjecture of Abért and Virág; see \cite[Conjecture 7.3]{AbertVirag}.

We set $\sigma:=(1\,\dotsb \, p)\in \mathrm{Sym}(p)$ and $\psi(a):=(a,1,\dotsc,1)\sigma$. The automorphism~$a$ is the so-called \textit{adding machine} and we denote by $a_n$ the $n$th truncated version of~$a$ for every level $n\ge 1$. It is immediate to see that $a$ acts transitively on the $p$-adic tree and $a_n$ acts transitively on the first $n$ levels of the $p$-adic tree.

Arguing as in \cref{lemma: construction of LI}, we fix a set of non-comparable vertices $X=\{x_n\}_{n\ge 1}$ and we define both the automorphisms
$$g:=\prod_{n\ge 1}a_n*x_n\quad\text{and}\quad h:=\Big(\prod_{n\ge 1}k_1*x_n(1\overset{n}{\dotsb}1)\Big)\Big(\prod_{n\ge 1}k_2*x_n (1\overset{n}{\dotsb}1)a_n^{p^{n-1}}\Big),$$
where $K:=\langle k_1,k_2\rangle$ is the group constructed in \cref{theorem: existence of 1-dimensional subgroups in WH} for the $p>2$ case, while for the $p=2$, it is the one constructed by Siegenthaler in \cite{SiegenthalerBinary}. Note that the closure of $K$ is $2$-generated and has Hausdorff dimension 1 in $W_H$. Moreover, it satisfies that

$$\prod_{n\ge 1}\overline{H'}*x_n{Y_n}\le \overline{L}_X\le W_p*X,$$
for some transversal $Y_n$ of $T_p$, by the same reasoning as in the proof of \cref{lemma: construction of LI}. Therefore, by \cite[Theorem 5]{AbertVirag} and \cref{proposition: dimension is a series} we get
$$\mathrm{hdim}_{W_p}^\mathcal{S}(\overline{L}_X)=\mu(X)$$
where $\mu(X)$ can be chosen to be any number in $[0,1]$. This proves \cref{Corollary: Wp 2-bounded spectrum}.

\section{Standard filtrations: the first Grigorchuk group}
\label{subsection: grigorchuk group}
As it was mentioned in the introduction, the level-stabilizers filtration $\mathcal{S}$ is the most natural choice for the definition of the Hausdorff dimension of profinite groups acting on rooted trees. However, in the case of pro-$p$ groups one is also interested in some standard filtrations. Indeed, Shalev's problem was originally posed for the $p$-power filtration and later considered for other standard filtrations; compare \cite{IkerAnitha, ShalevNewHorizons}. Here, we shall consider the closure $\Gamma$ of the first Grigorchuk group $\mathfrak{G}$ and two of the most natural standard filtrations on pro-$p$ groups: the $p$-central lower series $\mathcal{L}$ and the Jenning-Zassenhaus series $\mathcal{D}$.

Let $\mathfrak{G}$ denote the first Grigorchuk group which acts on the binary rooted tree $T_2$ and is generated by four elements; say, $a,b,c,d$ where $a$ is the rooted automorphism $a=(1,1)(1\,2)\in (\Aut T_2\times\Aut T_2)\rtimes\mathrm{Sym}(2)$, i.e. $a$ permutes the two subtrees hanging below the root, and the rest of the elements are defined recursively as 
\[
\psi(b):=(a,c), \quad \psi(c):=(a,d)\quad \text{and}\quad  \psi(d):=(1,b).
\]
By \cite[Proposition 4.2]{BartholdiGrigorchuk2} the group $\mathfrak{G}$ is  regular branch over the normal subgroup 
$$K:=\langle x \rangle^G=\langle x, (x,1), (1,x)\rangle,$$
where $x:=[a,b]=ababa$.
Consider  the lower 2-central series $\mathcal{L}$ (which coincides with the lower central series of $\mathfrak{G}$) and the Jenning-Zassenhaus series $\mathcal{D}$ of $\mathfrak{G}$ defined as follows:
$$\mathcal{L}: \gamma_1(\mathfrak{G}):=\mathfrak{G}, \;\gamma_{n+1}(\mathfrak{G}):=[\gamma_{n}(\mathfrak{G}),\mathfrak{G}]\gamma_{n}(\mathfrak{G})^2$$

$$\mathcal{D}: D_1(\mathfrak{G}):=\mathfrak{G},\; D_n(\mathfrak{G}):=[D_{n-1}(\mathfrak{G}),\mathfrak{G}]D_{\lceil n/2\rceil}(\mathfrak{G})^2=\prod_{i2^j\geq n}\gamma_i(\mathfrak{G})^{2^j}.$$

We are interested in computing the Hausdorff dimension of the closure $\Gamma$ of $\mathfrak{G}$ in $W_2$ with respect to $\mathcal{S}$. However, the following lemma and corollary show we may work with $\mathfrak{G}$: 

\begin{lemma}
\label{lemma: verbal subgroups}
    Let $G\le \mathrm{Aut}~T_\mathbf{m}$ be a finitely generated group and $w(G)\le G$ an open verbal subgroup. Then $\overline{w(\overline{G})}=\overline{w(G)}$ and as a consequence, $\overline{G}/w(\overline{G})\cong G/w(G)$.
\end{lemma}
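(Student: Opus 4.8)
The plan is to prove the two assertions in turn, isolating the single place where finite generation is genuinely needed. Throughout write $\Gamma=\overline{G}$ and $N:=|G:w(G)|$, which is finite because $w(G)$ is open.

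First I would establish the purely topological identity $\overline{w(\Gamma)}=\overline{w(G)}$ by a density argument that needs neither finite generation nor openness. The inclusion $\overline{w(G)}\subseteq\overline{w(\Gamma)}$ is immediate from $w(G)\subseteq w(\Gamma)$. For the reverse inclusion, observe that for each word $u$ in the defining set the corresponding word map $\mu_u\colon\Gamma^{(k)}\to\Gamma$ is continuous, being assembled from the continuous group operations; since $G^{(k)}$ is dense in $\Gamma^{(k)}$ and $\overline{w(G)}$ is closed, every $u$-value of $\Gamma$ lies in $\overline{w(G)}$. As $\overline{w(G)}$ is a subgroup it therefore contains the subgroup $w(\Gamma)$ generated by all such values, whence $\overline{w(\Gamma)}\subseteq\overline{w(G)}$.

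Next I would identify $\overline{w(G)}$ explicitly and deduce the congruence-quotient isomorphism. Since $w(G)$ is open it is also closed in $G$, so $\overline{w(G)}\cap G=w(G)$; by \cref{lemma: indices grow in closures} we have $|\Gamma:\overline{w(G)}|\le N$, and a closed subgroup of finite index in a profinite group is open. The projection $G\to G/w(G)$ is continuous (its kernel is open, hence contains some $\mathrm{St}_G(n)$) and therefore extends by density to a continuous surjection $\pi\colon\Gamma\to G/w(G)$ with $\ker\pi$ closed of index $N$ and $\ker\pi\cap G=w(G)$. Since $w(G)\subseteq\ker\pi$ gives $\overline{w(G)}\subseteq\ker\pi$ while both are open of index $\le N$, this forces $\overline{w(G)}=\ker\pi$ and $|\Gamma:\overline{w(G)}|=N$. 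The assignment $g\,w(G)\mapsto g\,\overline{w(G)}$ is then a well-defined isomorphism $G/w(G)\cong\Gamma/\overline{w(G)}$: it is injective because $\overline{w(G)}\cap G=w(G)$, and surjective because $G$ is dense and $\overline{w(G)}$ is open.

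The main obstacle is the final step, namely the identification $w(\Gamma)=\overline{w(G)}$, equivalently the closedness of the \emph{abstract} verbal subgroup $w(\Gamma)$; without it one only obtains $\Gamma/\overline{w(\Gamma)}\cong G/w(G)$ rather than the stated $\Gamma/w(\Gamma)\cong G/w(G)$. This is precisely where the finite generation hypothesis is used: as $\Gamma=\overline{G}$ is topologically finitely generated, every verbal subgroup of $\Gamma$ is closed by the theorem of Nikolov and Segal on finitely generated profinite groups (the closures $\overline{G}$ relevant to our applications are moreover finitely generated pro-$p$ groups, for which this closedness is classical). Granting it, the first paragraph yields $w(\Gamma)=\overline{w(\Gamma)}=\overline{w(G)}$, and substituting into the isomorphism of the previous paragraph gives $\Gamma/w(\Gamma)=\Gamma/\overline{w(G)}\cong G/w(G)$, as required.
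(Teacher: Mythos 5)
Your first two paragraphs are correct and, read with the standard profinite convention that $w(\overline{G})$ denotes the \emph{closed} subgroup generated by the $w$-values of $\overline{G}$, they already prove both assertions of the lemma; this is also the reading the paper itself takes, since its one-line deduction of the consequence from $\overline{w(\overline{G})}=\overline{w(G)}$ is only valid under that convention, and it is all that the subsequent corollary uses. Your route is genuinely different from the paper's: for the equality $\overline{w(\overline{G})}=\overline{w(G)}$ you use continuity of word maps together with density of $G^{(k)}$ in $\overline{G}^{(k)}$, which needs neither openness of $w(G)$ nor finite generation, whereas the paper uses openness to get $\mathrm{St}_G(k)\le w(G)$ and then transfers verbal subgroups through the isomorphism of congruence quotients $\overline{G}/\mathrm{St}_{\overline{G}}(k)\cong G/\mathrm{St}_G(k)$ (verbal subgroups being preserved by quotients), concluding that $\overline{w(G)}\subseteq\overline{w(\overline{G})}$ are closed subgroups of the same finite index. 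Your extension-of-the-quotient-map argument plus \cref{lemma: indices grow in closures} then recovers exactly the index computation and the isomorphism $\overline{G}/\overline{w(G)}\cong G/w(G)$ that the paper reads off from the congruence quotients; on either route the finite generation hypothesis is never actually used.

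Your third paragraph, however, rests on a false attribution, and this is where your proposal has a genuine flaw. Nikolov and Segal do \emph{not} prove that every abstract verbal subgroup of a topologically finitely generated profinite group is closed, and that statement is false: by Jaikin-Zapirain's work on verbal width, there exist words $w$ and finitely generated pro-$p$ groups $\Gamma$ for which the abstract subgroup $w(\Gamma)$ is not closed, so the parenthetical claim that closedness is ``classical'' for finitely generated pro-$p$ groups is wrong as well. What Nikolov--Segal do give is that abstract subgroups of \emph{finite index} are open, and that $w(\Gamma)$ is open when $w$ is a $d$-locally finite word; neither applies here, because density of the abstract subgroup $w(\overline{G})$ in the open subgroup $\overline{w(G)}$ does not force $w(\overline{G})$ to have finite index, and an arbitrary word need not be locally finite. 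Thus, if one insists on reading $w(\overline{G})$ as the abstract verbal subgroup, your final step fails (closedness would amount to $w$ having finite width in $\overline{G}$, which is not available for arbitrary words) --- and the paper's statement would face the same difficulty, which is precisely why it should be read with the closed-subgroup convention. Under that reading your proof is complete after the second paragraph, and the third paragraph should simply be deleted rather than repaired.
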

\begin{proof}
    The containment $\overline{w(\overline{G})}\ge \overline{w(G)}$ is immediate. The converse follows because if $w(G)$ is open, there exists $k\ge 1$ such that $w(G)\ge \mathrm{St}_G(k)$. But $\overline{G}/\mathrm{St}_{\overline{G}}(k)\cong G/\mathrm{St}_G(k)$ and since verbal subgroups are preserved under taking quotients we obtain both $\overline{w(G)}$ and $\overline{w(\overline{G})}$ have the same index in $\overline{G}$, concluding the proof.
\end{proof}

\begin{corollary}
    Let $G\le \mathrm{Aut}~T_\mathbf{m}$ be a finitely generated branch group with the congruence subgroup property. Then, for every non-trivial word $w$, the following equality holds:
    \[
    |G:w(G)|=|\overline{G}:w(\overline{G})|.
    \]
\end{corollary}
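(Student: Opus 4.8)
The plan is to deduce the statement from \cref{lemma: verbal subgroups} by reducing to the case of an \emph{open} verbal subgroup and handling the remaining case through the profinite completion. The congruence subgroup property says precisely that the congruence topology on $G$ coincides with its full profinite topology; thus $\overline{G}$ is the profinite completion $\widehat{G}$ of $G$, and every finite-index subgroup of $G$ is open. Since $G$ is finitely generated, these are exactly the standing hypotheses of \cref{lemma: verbal subgroups}. I would therefore split the argument according to the index $|G:w(G)|$, recalling that $w(G)$ is a verbal, hence fully invariant and in particular normal, subgroup, and that a branch group is level-transitive, so \cref{lemma: normal subgroups contain derived of rist} applies to it.

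First I would dispose of the easy cases. If $w$ is a law of $G$, i.e. $w(G)=1$, then by continuity of the word map and density of $G$ in $\overline{G}$ the word $w$ is also a law of $\overline{G}$, so $w(\overline{G})=1$ and both sides equal $|\overline{G}|=|G|=\infty$. If instead $w(G)\ne 1$ has finite index in $G$, then by the congruence subgroup property $w(G)$ is an open verbal subgroup, and \cref{lemma: verbal subgroups} applies directly to yield $\overline{G}/w(\overline{G})\cong G/w(G)$, so the two (finite) indices coincide.

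The remaining, and main, case is when $w(G)\ne 1$ has infinite index, and this is where the branch hypothesis is genuinely used, since \cref{lemma: verbal subgroups} only treats open verbal subgroups. Here I would first observe that every proper quotient of a branch group is virtually abelian: as $w(G)$ is a nontrivial normal subgroup, \cref{lemma: normal subgroups contain derived of rist} gives $\mathrm{Rist}_G(n)'\le w(G)$ for some $n$, and since $G$ is branch the subgroup $\mathrm{Rist}_G(n)$ has finite index, so $G/w(G)$ is a quotient of the abelian-by-finite group $G/\mathrm{Rist}_G(n)'$. Thus $G/w(G)$ is a finitely generated virtually abelian group, hence residually finite, and it is infinite by assumption. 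Consequently its profinite completion $\widehat{G/w(G)}\cong \overline{G}/\overline{w(G)}$ (using $\overline{G}=\widehat{G}$) is infinite, so $|\overline{G}:\overline{w(G)}|=\infty$. Since density of $G$ in $\overline{G}$ together with continuity of the word map gives $w(\overline{G})\le\overline{w(G)}$, I conclude $|\overline{G}:w(\overline{G})|=\infty=|G:w(G)|$, which settles this case.

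The hard part is exactly this infinite-index case: there is no reason for $w(G)$ to be of finite index in general (for instance when $G$ has infinite abelianisation and $w=[x,y]$), so \cref{lemma: verbal subgroups} cannot be invoked verbatim. The crucial input that unlocks it is that the branch structure forces every proper quotient to be virtually abelian, hence residually finite, so that the index is preserved under completion; the congruence subgroup property is what allows me to identify $\overline{G}$ with $\widehat{G}$ and thereby read off $|\overline{G}:\overline{w(G)}|$ as $|\widehat{G/w(G)}|$. I expect the only delicate points to be the standard completion identity $\overline{G}/\overline{w(G)}\cong\widehat{G/w(G)}$ and the elementary containment $w(\overline{G})\le\overline{w(G)}$, both of which are routine given residual finiteness of $G/w(G)$.
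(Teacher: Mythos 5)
Your proof is correct---granted the usual convention that an index is an element of $\mathbb{N}\cup\{\infty\}$---but it follows a genuinely different route from the paper's. The paper's proof is a short reduction to your Case 2: by \cite[Corollary 1.4]{Abert} a branch group satisfies no non-trivial group law, so $w(G)\ne 1$; by Theorem 4.4.4 of \cite{Dominik:PhD_thesis} a finitely generated branch group with the congruence subgroup property is just infinite, so the non-trivial normal subgroup $w(G)$ automatically has finite index; the congruence subgroup property then makes $w(G)$ open, and \cref{lemma: verbal subgroups} finishes, exactly as in your Case 2. Your Cases 1 and 3 are therefore vacuous, and the two external citations are precisely what the paper uses to see this. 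Your argument instead handles them head-on, using only in-paper tools (\cref{lemma: normal subgroups contain derived of rist}, which makes $G/w(G)$ finitely generated virtually abelian, hence residually finite) together with the standard exact sequence $1\to\overline{N}\to\widehat{G}\to\widehat{G/N}\to 1$; these steps are all sound, so your proof is more self-contained, at the cost of being longer. What the paper's route buys in exchange is the stronger structural conclusion that for these groups every verbal subgroup attached to a non-trivial word has finite index (is open), so the asserted equality is always an equality of finite numbers.

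One caveat you should record: in Cases 1 and 3 you establish only that both indices are infinite. If indices are read as cardinals, that is not an equality: $|G:w(G)|\le\aleph_0$ because $G$ is countable, whereas in Case 3 the closed subgroup $\overline{w(G)}$ has infinite, hence uncountable, index in $\overline{G}$ (a closed subgroup of countable index in a profinite group is open, by Baire category), and $w(\overline{G})\le\overline{w(G)}$ then forces $|\overline{G}:w(\overline{G})|$ to be uncountable too; similarly in Case 1 the right-hand side is $|\overline{G}|\ge 2^{\aleph_0}$. So under the strict cardinal reading these two cases would actually contradict the statement, and they are rescued only by being empty---which is exactly the content of the results the paper quotes. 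Under the $\mathbb{N}\cup\{\infty\}$ reading of indices, your proof stands as written.
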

\begin{proof}
    By Theorem 4.4.4 in \cite{Dominik:PhD_thesis} the group $G$ is just-infinite. Since $w(G)$ is normal in $G$ and by  \cite[Corollary 1.4]{Abert} we have $w(G)\ne 1$, the subgroup $w(G)$ is of finite index and thus open by the congruence subgroup property. Then the result follows by \cref{lemma: verbal subgroups}.
\end{proof}

From now on we shall only work on the binary rooted tree. Thus, in the remainder of the section, the integer $m\ge 1$ will not longer represent the degree of a rooted tree but it will be used as an index for certain filtrations of $\mathfrak{G}$.

In \cite{Rozhkov}, Rozhkov computed the successive quotients of $\mathcal{L}$ for $\mathfrak{G}$. To that aim, he defined a filtration $\{N_m\}_{m\ge 0}$ of $\mathfrak{G}$ which we introduce below. We shall use the following notation: for any $Q\le K$ and $n\ge 0$ we write
\[Q_n:=\psi_n^{-1}(Q\times \overset{2^n}{\cdots}\times Q)\le \St_G(n).\]
Using this notation, 
set $T:=\langle x^2\rangle^\mathfrak{G}=K^2$ and define
$$\mathcal{N}: N_0:=\mathfrak{G} \text{ and } \text{ for each }\; m\geq 1, \quad N_m=K_m\cdot T_{m-1}.$$ 
By \cite[Lemma 6.1]{BarthGrig}, we have 
$$\frac{N_m}{N_{m+1}}=X\times Y\cong \mathbb{F}_2^{2^m}\times \mathbb{F}_2^{2^{m-1}},$$
where 
$$X:=\langle (1,\overset{r}{\dotsc},1,x,1,\dotsc,1)N_{m+1}\mid 0\le r\le 2^m-1\rangle \le \mathbb{F}_2^{2^{m}}$$
and
$$Y:=\langle (1,\overset{r}{\dotsc},1,x^2,1,\dotsc,1)N_{m+1}\mid 0\le r\le 2^{m-1}-1\rangle \le \mathbb{F}_2^{2^{m-1}}.$$

Rozhkov's methods were generalised in \cite{BarthGrig} to a wider class of regular branch groups and in what follows we recall some of their results.

Define, for every integer $m\ge 1$, the $\mathbb{F}_2$-vector space
$$V_m:=\frac{\mathbb{F}_2[X_1,\dotsc, X_m]}{(X_1^2-1,\dotsc, X_m^2-1)}\cong\frac{\mathbb{F}_2[X_1]}{(X_1^2-1)}\otimes\dotsb \otimes \frac{\mathbb{F}_2[X_m]}{(X_m^2-1)}.$$
We also set $V_0:={0}$. For $0\le r\le 2^m-1$, we write $r=(r_m\dotsb r_1)_2$ in base 2, i.e. $r=\sum_{i=1}^m r_i 2^{i-1}$, and define the elements
$$v_m^r:=(1-X_1)^{r_1}\dotsb (1-X_m)^{r_m}.$$
Furthermore, we define the following $\F_2$-vector subspaces of $V_m$:
$$V_m^r:=\langle v_m^i\mid r\le i\le  2^m-1\rangle$$
and set $V_m^r={0}$ for every $r\ge 2^m$. Note that the inclusions
\[
V_m^{2^m}=0\subset \dots \subset V_m^1\subset V_m^0=V_m
\]
hold, and that $V_m^{j+1}$ has codimension one in $V_m^{j}$ for all $0\le j\le 2^m-1$. 

We fix an integer $m\geq 1$ and we write $\{X_1, \dots, X_m\}$
and $\{Y_1, \dots, Y_{m-1}\}$ for the monomials in the direct sum $V_m\oplus V_{m-1}$, respectively. We define the $\mathbb{F}_2$-linear maps
    \[\begin{matrix}
    \alpha \colon & X\le N_m/N_{m+1}  & \longrightarrow &V_m\\ &&&\\
&(1,\overset{r}{\dotsc},1,x,1,\dotsc,1)N_{m+1}&\mapsto &X_1^{r_1}\dotsb X_m^{r_m},
    \end{matrix}\]
    and 
    \[\begin{matrix}
    \beta \colon & Y\le N_m/N_{m+1}  & \longrightarrow &V_{m-1}\\ &&&\\
&(1,\overset{r}{\dotsc},1,x^2,1,\dotsc,1)N_{m+1}&\mapsto &Y_1^{r_1}\dotsb Y_{m-1}^{r_{m-1}}.
    \end{matrix}\]

\begin{lemma}[{see {\cite[Lemma 6.1]{BarthGrig}}}]
    The $\mathbb{F}_2$-linear map \[
    \alpha\oplus \beta\colon N_m/N_{m+1}=X\times Y\to V_m\oplus V_{m-1}\] is an $\mathbb{F}_2$-isomorphism.
  
\end{lemma}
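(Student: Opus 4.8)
The plan is to treat $\alpha$ and $\beta$ separately, showing that each sends a distinguished basis of its domain bijectively onto the monomial basis of its codomain; since the direct sum of two isomorphisms is again an isomorphism, the claim for $\alpha\oplus\beta$ follows at once.

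First I would pin down the dimensions of the two target spaces. Since $V_m=\mathbb{F}_2[X_1,\dots,X_m]/(X_1^2-1,\dots,X_m^2-1)$, the relations $X_i^2=1$ allow every monomial to be rewritten with each variable occurring to the power $0$ or $1$, so the squarefree monomials $X_1^{r_1}\cdots X_m^{r_m}$ with $(r_1,\dots,r_m)\in\{0,1\}^m$ form an $\mathbb{F}_2$-basis of $V_m$; in particular $\dim_{\mathbb{F}_2}V_m=2^m$, and likewise $\dim_{\mathbb{F}_2}V_{m-1}=2^{m-1}$.

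Next I would use the decomposition $N_m/N_{m+1}=X\times Y\cong\mathbb{F}_2^{2^m}\times\mathbb{F}_2^{2^{m-1}}$ recorded above to conclude $\dim_{\mathbb{F}_2}X=2^m$ and $\dim_{\mathbb{F}_2}Y=2^{m-1}$. As $X$ is by definition spanned by the $2^m$ cosets $(1,\overset{r}{\dotsc},1,x,1,\dots,1)N_{m+1}$ with $0\le r\le 2^m-1$, this count forces those cosets to be linearly independent, hence a basis of $X$; the same reasoning shows the $2^{m-1}$ generating cosets of $Y$ form a basis of $Y$. Because $\alpha$ and $\beta$ are prescribed precisely on these bases, they extend uniquely to well-defined $\mathbb{F}_2$-linear maps, with no relations to verify.

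Finally I would read off the bijection. Writing $r=(r_m\cdots r_1)_2$ identifies $\{0,1,\dots,2^m-1\}$ with $\{0,1\}^m$ via $r\mapsto(r_1,\dots,r_m)$, and under this identification $\alpha$ carries the basis coset indexed by $r$ to the basis monomial $X_1^{r_1}\cdots X_m^{r_m}$; thus $\alpha$ maps a basis bijectively onto a basis and is an isomorphism. The identical argument, now using the digits $(r_1,\dots,r_{m-1})$, shows $\beta$ is an isomorphism, whence $\alpha\oplus\beta\colon X\times Y\to V_m\oplus V_{m-1}$ is an $\mathbb{F}_2$-isomorphism. I expect no genuine difficulty here: the linear independence of the generators comes for free from the cited dimension count, so the only point demanding care is the bookkeeping of the binary indexing that matches cosets to monomials.
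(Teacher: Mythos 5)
Your argument is correct as a deduction, but it is worth being precise about what it proves and how that compares with the paper: the paper gives \emph{no} proof of this lemma at all --- both the displayed decomposition $N_m/N_{m+1}=X\times Y\cong\mathbb{F}_2^{2^m}\times\mathbb{F}_2^{2^{m-1}}$ and the statement that $\alpha\oplus\beta$ is an isomorphism are quoted from \cite[Lemma 6.1]{BarthGrig}. What you do is take that earlier display as given and derive the lemma from it by elementary linear algebra: the dimension count forces the $2^m$ (resp.\ $2^{m-1}$) spanning cosets of $X$ (resp.\ $Y$) to be linearly independent, hence a basis, which simultaneously makes $\alpha$ and $\beta$ well defined (a point you rightly flag --- prescribing a map on a mere generating set would require checking relations) and shows each map carries a basis bijectively onto the squarefree-monomial basis of $V_m$, resp.\ $V_{m-1}$. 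Within the logical structure of the paper this is legitimate, since the display precedes the lemma, and it usefully clarifies that the lemma adds nothing beyond that display except the bookkeeping of the binary indexing $r=(r_m\dotsb r_1)_2$, which you handle correctly. The caveat is that your proof does not (and by linear algebra alone cannot) establish the genuine group-theoretic content, namely that $N_m/N_{m+1}$ is elementary abelian of rank $2^m+2^{m-1}$ with the listed cosets independent; that requires computation inside the Grigorchuk group with the subgroups $K_m$ and $T_{m-1}$, and it is exactly what both the display and the lemma are citing from Bartholdi--Grigorchuk. So regarded as a proof of the lemma \emph{from the paper's stated prerequisites}, your proposal is fine; regarded as a from-scratch proof of the cited result, it would be circular, since the ``dimension count that comes for free'' is precisely that result.
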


An immediate consequence of the above result is that for $\mathrm{rist}_{N_m}(i_1\overset{k}{\ldots}i_k)$ we can restrict the map $\alpha\oplus \beta$ to an isomorphism by fixing the prefix monomials $X_{1}^{i_{1}}\dotsb X_k^{i_k}$ and $Y_{1}^{i_{1}}\dotsb Y_k^{i_k}$: 

\begin{corollary}
\label{corollary: rist in terms of Vn}
    There is an $\mathbb{F}_2$-isomorphism
        \begin{align*}
        \alpha\oplus \beta\colon\mathrm{rist}_{N_m}(i_1\overset{k}{\ldots}i_k)N_{m+1}/N_{m+1} &\to V_{m-k}\oplus V_{m-1-k}\\
        (1,\overset{r}{\dotsc},1,x,1,\dotsc,1)N_{m+1}&\mapsto X_{k+1}^{r_{k+1}}\dotsb X_m^{r_m},\\
        (1,\overset{r}{\dotsc},1,x^2,1,\dotsc,1)N_{m+1}&\mapsto Y_{k+1}^{r_{k+1}}\dotsb Y_{m-1}^{r_{m-1}},
        \end{align*}
where we identify $X_{l}$ with $X_{l-k}$ and $Y_l$ with $Y_{l-k}$ for $1\le l\le m$.

\end{corollary}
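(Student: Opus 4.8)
The plan is to read the corollary off the preceding Lemma, by restricting the isomorphism $\alpha\oplus\beta$ to those coordinates that sit below the vertex $v:=i_1\cdots i_k$. First I would record the explicit basis that the Lemma provides: the single-letter generators $(1,\overset{r}{\dotsc},1,x,1,\dotsc,1)N_{m+1}$ with $0\le r\le 2^m-1$ form an $\mathbb{F}_2$-basis of $X$ which $\alpha$ carries bijectively onto the monomial basis $\{X_1^{r_1}\dotsb X_m^{r_m}\}$ of $V_m$, and likewise the generators $(1,\overset{s}{\dotsc},1,x^2,1,\dotsc,1)N_{m+1}$ with $0\le s\le 2^{m-1}-1$ form a basis of $Y$ sent by $\beta$ to the monomial basis of $V_{m-1}$. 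The key structural observation is that each such generator is supported at a \emph{single} vertex (at level $m$ in the first case, at level $m-1$ in the second), so its support lies below $v$ precisely when the digits of its position indexing the prefix variables $X_1,\dots,X_k$ equal $i_1,\dots,i_k$.

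Next I would identify $\mathrm{rist}_{N_m}(i_1\overset{k}{\ldots}i_k)N_{m+1}/N_{m+1}$ with the span of exactly those generators supported below $v$. The inclusion ``$\supseteq$'' is immediate, since such a generator acts nontrivially only on the subtree rooted at $v$ and hence lies in $\mathrm{rist}_{N_m}(v)$. For ``$\subseteq$'', take any $g\in\mathrm{rist}_{N_m}(v)$; since $g$ acts trivially off that subtree, writing $\overline{g}$ uniquely in the basis above (this uniqueness is exactly the content of the Lemma) forces every coordinate attached to a vertex not below $v$ to vanish, because those basis generators are supported at single vertices outside the subtree and their coordinates in $N_m/N_{m+1}$ are independent. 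This step---checking that passing to the quotient creates no spurious off-$v$ components---is the one genuine point to verify, and it rests precisely on the locality of the generators together with the independence of coordinates supplied by the Lemma.

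Finally I would compute the image under $\alpha$, and symmetrically under $\beta$. For a generator supported below $v$ the position $r$ has its prefix digits fixed to $i_1,\dots,i_k$, so its image $X_1^{r_1}\dotsb X_m^{r_m}=X_1^{i_1}\dotsb X_k^{i_k}\,X_{k+1}^{r_{k+1}}\dotsb X_m^{r_m}$ carries the common prefix monomial $X_1^{i_1}\dotsb X_k^{i_k}$; as the free exponents $(r_{k+1},\dots,r_m)$ range over $\{0,1\}^{m-k}$ the free factors run over all monomials in $X_{k+1},\dots,X_m$, whose span becomes $V_{m-k}$ after the relabeling $X_l\mapsto X_{l-k}$. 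The identical computation on the $Y$-side yields $V_{m-1-k}$. Since the restriction of the isomorphism $\alpha\oplus\beta$ to a subspace is automatically injective and, by the above, has image $V_{m-k}\oplus V_{m-1-k}$, it is the asserted $\mathbb{F}_2$-isomorphism.
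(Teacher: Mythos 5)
Your overall route is the same as the paper's: the corollary is obtained by restricting the isomorphism $\alpha\oplus\beta$ of \cite[Lemma 6.1]{BarthGrig} to the block of basis vectors whose supporting vertex lies below $v:=i_1\dotsb i_k$, i.e.\ by fixing the prefix monomials $X_1^{i_1}\dotsb X_k^{i_k}$ and $Y_1^{i_1}\dotsb Y_k^{i_k}$; your steps recording the basis and computing the image are fine. However, the justification you give for the inclusion ``$\subseteq$'' --- the one step you yourself flag as the genuine point to verify --- is not sufficient. Locality of the chosen representatives together with linear independence of their images in $N_m/N_{m+1}$ does not force the off-$v$ coordinates of $\overline{g}$ to vanish. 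Concretely, write $g=\prod_u b_u^{c_u}\,n$ with $n\in N_{m+1}$, where $b_u$ denotes the generator supported at the vertex $u$. Triviality of the section of $g$ at an off-$v$ vertex $u$ only yields $n|_u=x^{-c_u}$ (resp.\ $x^{-2c_u}$), so to conclude $c_u=0$ you must know that no element of $N_{m+1}$ has section $x$ (resp.\ $x^2$) at $u$. Independence of the images $\overline{b_u}$ rules out the case where such an $n$ is supported exactly at $u$, but it says nothing about elements of $N_{m+1}$ whose section at $u$ is $x$ compensated by nontrivial sections at other vertices. What you are implicitly using is that $N_{m+1}$ itself respects the decomposition of the tree into the subtrees rooted at level $k$, and that is a property of the subgroup $N_{m+1}$, not of a choice of generators.

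The missing ingredient is exactly that $\{N_m\}_{m\ge 0}$ is a direct product filtration, which the paper proves separately (\cref{lemma:Nm_product_filtration}): from $N_m=K_mT_{m-1}$ one gets $N_m=\psi^{-1}(N_{m-1}\times N_{m-1})$, and iterating, $N_m=\psi_k^{-1}(N_{m-k}\times\overset{2^k}{\dotsb}\times N_{m-k})$ and likewise for $N_{m+1}$. Granting this, the whole corollary becomes transparent and your coordinate argument can be bypassed: one has $\mathrm{rist}_{N_m}(v)=\psi_k^{-1}(1\times\dotsb\times N_{m-k}\times\dotsb\times 1)$ (the factor at $v$), hence $\mathrm{rist}_{N_m}(v)\cap N_{m+1}$ is the factor of $N_{m+1}$ at $v$, and therefore
\begin{equation*}
\frac{\mathrm{rist}_{N_m}(v)N_{m+1}}{N_{m+1}}\cong \frac{N_{m-k}}{N_{m+1-k}}\cong V_{m-k}\oplus V_{m-1-k},
\end{equation*}
the last isomorphism being \cite[Lemma 6.1]{BarthGrig} with $m-k$ in place of $m$; unwinding the identification $g\mapsto g|_v$ recovers precisely the restriction of $\alpha\oplus\beta$ with the prefix variables fixed, as in the statement. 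So your proof is repairable with one additional lemma, but as written the key step rests on a reason that does not prove it.
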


\begin{proposition}[{see {\cite[Theorems 6.4 and 6.6]{BarthGrig}}}]
\label{proposition: lower central and dimension series of grigorchuk group}
    For all $m\ge 1$ we have:
    \begin{enumerate}[\normalfont(i)]
        \item the equality of subgroups $\gamma_{2^m+1}(\mathfrak{G})=D_{2^m+1}(\mathfrak{G})=N_m$;
        \item the equality of subgroups $\gamma_{2^m+1+r}(\mathfrak{G})=\alpha^{-1}(V_m^r)\beta^{-1}(V_{m-1}^r)N_{m+1}$ for any $1\le r\le 2^m-1$;
        \item the equality of subgroups $$D_{2^m+1+r}(\mathfrak{G})=\begin{cases}
            \alpha^{-1}(V_m^r)\beta^{-1}(V_{m-1}^{r/2})N_{m+1}$ for $2\le r\le 2^m-1 \text{ even},\\
            \alpha^{-1}(V_m^r)\beta^{-1}(V_{m-1}^{(r-1)/2})N_{m+1}$ for $1\le r\le 2^m-1 \text{ odd.}
        \end{cases}$$
    \end{enumerate}
\end{proposition}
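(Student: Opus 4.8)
The plan is to reprove \cite[Theorems 6.4 and 6.6]{BarthGrig} by induction on $m$, where the inductive step computes an entire block of both series at once, namely all terms of index $n$ with $2^m+1\le n\le 2^{m+1}+1$, by locating them inside the single abelian layer $N_m/N_{m+1}$, which the preceding lemma already identifies via $\alpha\oplus\beta$ with $V_m\oplus V_{m-1}$. The engine of the induction is the branch recursion: since $\mathfrak{G}$ is regular branch over $K$ and $N_m=K_mT_{m-1}$, applying $\psi$ to $\St_{\mathfrak{G}}(1)$ intersected with a term of the series relates the $n$-th term on $T_2$ to roughly the $\lceil n/2\rceil$-th term on each first-level subtree, for both the commutator and the power structure. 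This index-halving is exactly what forces the landmark identities $\gamma_{2^m+1}(\mathfrak{G})=D_{2^m+1}(\mathfrak{G})=N_m$ of part (i), which I would establish first, as the hinge gluing consecutive blocks together, after checking the small cases $m=1,2$ by hand from the explicit presentation of $\mathfrak{G}$.

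The second step is to recognise the filtration $\{V_m^r\}$ as a \emph{weighted-degree} filtration. Writing $u_i:=1-X_i=1+X_i$, so that $u_i^2=0$ and $V_m\cong\mathbb{F}_2[u_1,\dots,u_m]/(u_i^2)$ with basis the monomials $v_m^r=u_1^{r_1}\cdots u_m^{r_m}$, and assigning to $u_i$ the weight $2^{i-1}$, the weight of $v_m^r$ is precisely $\sum_i r_i2^{i-1}=r$. Because binary expansions are unique there is exactly one monomial of each weight $r\in\{0,\dots,2^m-1\}$, so $V_m^r$ drops by codimension one at every step; the doubling of weights as $i$ increases mirrors the index-halving of $\psi$, a commutator ``localised at level $i$'' costing $2^{i-1}$ steps of the series. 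With this dictionary in place I would translate the two defining operations of the series into $\mathbb{F}_2$-linear operators on $V_m\oplus V_{m-1}$: the commutator $[\,\cdot\,,\mathfrak{G}]$, and the power map $g\mapsto g^2$, the latter coupling the $X$-generator $x$ into the $Y$-generator $x^2$.

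Granting these, part (ii) follows by the inductive assertion that one commutator step raises the weighted degree by exactly one: iterating from the landmark $r=0$ gives $\gamma_{2^m+1+r}(\mathfrak{G})=\alpha^{-1}(V_m^r)\beta^{-1}(V_{m-1}^r)N_{m+1}$. For part (iii) the only change is dictated by the Jennings--Zassenhaus rule $D_n(\mathfrak{G})=\prod_{i2^j\ge n}\gamma_i(\mathfrak{G})^{2^j}$: the entire $Y$-part consists of squares $x^2$, which in the dimension series sink to \emph{doubled} depth (the shift $i\mapsto 2i$). Consequently the $Y$-filtration advances at half the rate of the $X$-filtration, so by the time the $X$-layer has reached $V_m^r$ the $Y$-layer has only reached $V_{m-1}^{\lfloor r/2\rfloor}$, which is exactly the even/odd case split in the statement.

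The main obstacle is twofold. First, the branch recursion must be made quantitatively exact, pinning the landmark down to $\gamma_{2^m+1}(\mathfrak{G})=N_m$ on the nose rather than up to a bounded shift, and this requires controlling the images under $\psi$ of both $[\gamma_n(\mathfrak{G}),\mathfrak{G}]$ and $\gamma_n(\mathfrak{G})^2$, where the specific Grigorchuk relations enter. Second, and more delicate, is the coupling between the $X$- and $Y$-summands: one would \emph{like} $[\,\cdot\,,\mathfrak{G}]$ to act simply as multiplication by the augmentation ideal $\mathfrak{a}=(u_1,\dots,u_m)$ and thereby advance the weighted degree by one, but since $u_i$ raises the weight by $2^{i-1}$ and a single commutator or square feeds both layers simultaneously, this naive identity fails, and the single-step increment must instead be extracted from the recursion. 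Verifying that each operation lands in the correct filtration step, $V_m^{r+1}\oplus V_{m-1}^{r+1}$ for the central series versus $V_m^{r+1}\oplus V_{m-1}^{\lfloor (r+1)/2\rfloor}$ for the dimension series, and in particular getting the doubling and its even/odd boundary exactly right, is the computation that must be carried out with care.
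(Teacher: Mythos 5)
The first thing to note is that the paper does not prove this proposition at all: it is imported directly from Bartholdi--Grigorchuk \cite{BarthGrig} (Theorems 6.4 and 6.6), stated in terms of Rozhkov's filtration $\{N_m\}_{m\ge 0}$, so there is no in-paper argument to compare against. Your plan does reconstruct the architecture of the known proof correctly: working block by block inside the layers $N_m/N_{m+1}\cong V_m\oplus V_{m-1}$, reading $\{V_m^r\}$ as a weight filtration in which $u_i:=1-X_i$ satisfies $u_i^2=0$ and the monomial $u_1^{r_1}\dotsb u_m^{r_m}$ has weight $\sum_i r_i2^{i-1}=r$, exploiting the index-halving of $\psi$ coming from the branch structure, and attributing the even/odd split in (iii) to squares sinking to doubled depth in the Jennings--Zassenhaus rule. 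These are genuinely the right mechanisms.

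As a proof, however, the proposal has a genuine gap, and you in effect name it yourself. Everything in the statement hinges on two \emph{exact} (not approximate) facts: first, the landmark $\gamma_{2^m+1}(\mathfrak{G})=D_{2^m+1}(\mathfrak{G})=N_m$ on the nose; second, that a single step of the series advances the weight filtration by exactly one, i.e. that $[\alpha^{-1}(V_m^r)\beta^{-1}(V_{m-1}^r)N_{m+1},\mathfrak{G}]$ together with the relevant squares equals $\alpha^{-1}(V_m^{r+1})\beta^{-1}(V_{m-1}^{r+1})N_{m+1}$ for \emph{every} $r$. You correctly observe that the naive mechanism (commutation acting as multiplication by the augmentation ideal) cannot deliver this, since multiplication by $u_i$ jumps the weight by $2^{i-1}$ and $u_1^2=0$ forbids repeated use of $u_1$: at odd $r$ the passage from weight $r$ to $r+1$ requires a binary ``carry'', and producing the carried monomial is exactly where the specific relations of $\mathfrak{G}$ and the squaring maps enter. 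On that last point your bookkeeping is also off: squaring an $X$-generator of block $m$ (an $x$ at a level-$m$ vertex) lands in $T_m\le N_{m+1}$, hence vanishes in block $m$; the $Y$-part of block $m$ is instead produced by squaring the $X$-part of block $m-1$, so the $X$--$Y$ coupling is across consecutive blocks, not within one, and this cross-block feeding is what must be tracked to get both the carry in (ii) and the $\lfloor r/2\rfloor$ shift in (iii) exactly right. Deferring all of this as ``the computation that must be carried out with care'' leaves the proof with its core missing: that computation \emph{is} the content of the cited theorems. As it stands, the proposal is a correct road map to the Rozhkov/Bartholdi--Grigorchuk argument, not a proof of the proposition.
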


We prove \cref{theorem: Grigorchuk spectra} for $\mathcal{L}$ as, thanks to \cref{proposition: lower central and dimension series of grigorchuk group}, the proof for $\mathcal{D}$ is completely analogous. It is enough to prove the equivalent of \cref{lemma: dimension of product} for $\mathcal{L}$; see \cref{proposition: grigorchuk rist dimension} below. Then we may apply the machinery in \cref{proposition: dimension is a series} and the construction in \cref{lemma: construction of LI} to prove \cref{theorem: Grigorchuk spectra}, as the commutator subgroup of any normal subgroup of $\mathfrak{G}$ is still of finite index in $\mathfrak{G}$ by just-infiniteness of $\mathfrak{G}$ and the Hausdorff dimension does not change when passing to a finite index subgroup.

We introduce the following notation. For a vertex $v\in T_2$ we consider the finite path $x_1\dotsb x_n$ from the root to $v$ and write $(v)_2$ for the natural number whose binary expansion is precisely $(x_n\dotsb x_1)_2$ (for this purpose we label the tree vertices by the alphabet $\{0,1\}$ instead of $\{1,2\}$). In other words, 
$$(v)_2:=(x_n\dotsb x_1)_2=x_n\cdot 2^{n-1}+\dotsb +x_2\cdot 2+x_1.$$
Let also $|v|_w$ denote the word metric on $T_2$, i.e. $|v|_w=n$ if $v$ lies at the $n$th level of $T_2$.

\begin{lemma}\label{lemma:Nm_product_filtration}
    The filtration series $\{N_m\}_{m\geq 0}$ is a direct product filtration.
\end{lemma}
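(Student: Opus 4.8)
The plan is to unwind the definition of a direct product filtration. Fixing a level $k\ge 1$, I must show that for almost all $m$ — concretely for all $m\ge k+1$ — one has the internal decomposition
\[
N_m=\prod_{v\in\mathcal{L}_k}\big(N_m\cap\mathrm{rist}_{\mathfrak{G}}(v)\big).
\]
The first observation is that for $m\ge k+1$ we have $K_m\le \mathrm{St}(m)\le\mathrm{St}(k)$ and $T_{m-1}\le\mathrm{St}(m-1)\le\mathrm{St}(k)$, so $N_m=K_mT_{m-1}\le\mathrm{St}_{\mathfrak{G}}(k)$. Hence every $g\in N_m$ fixes the $k$th level and splits canonically as $g=\prod_{v\in\mathcal{L}_k}(g|_v*v)$, the factors being supported on the pairwise non-comparable subtrees rooted at the vertices of $\mathcal{L}_k$; since $h\mapsto h*v$ is a homomorphism and the supports are disjoint, the right-hand side above is a genuine internal direct product and it is clearly contained in $N_m$. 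The entire content of the lemma is therefore the reverse inclusion, i.e. that each factor $g|_v*v$ lies again in $N_m$.

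To this end I would prove the two building blocks $K_m$ and $T_{m-1}$ are each stable under the truncation map $g\mapsto g|_v*v$. Because $K$ is a branching subgroup of $\mathfrak{G}$, one has $K_j\le\mathfrak{G}$ for all $j$, and since $T=\langle x^2\rangle^{\mathfrak{G}}\le K$ also $T_j\le K_j\le\mathfrak{G}$. Now if $g\in K_m=\psi_m^{-1}(K\times\dots\times K)$ and $v\in\mathcal{L}_k$, then the level-$(m-k)$ sections of $g|_v$ are among the level-$m$ sections of $g$, so $g|_v\in K_{m-k}$; consequently $g|_v*v$ has all its level-$m$ sections in $K$ (those below $v$ coming from $g|_v$, the remaining ones trivial), whence $g|_v*v\in K_m$. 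The identical bookkeeping with $T$ in place of $K$ and the level $m-1$ in place of $m$ gives $t|_v*v\in T_{m-1}$ for $t\in T_{m-1}$.

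Combining the two is then routine: writing $g=\kappa\tau$ with $\kappa\in K_m$, $\tau\in T_{m-1}$, and using that $\kappa$ fixes $v$, the section formula gives $g|_v=\kappa|_v\,\tau|_v$, so
\[
g|_v*v=(\kappa|_v*v)(\tau|_v*v)\in K_mT_{m-1}=N_m,
\]
and being supported on the subtree below $v$ it lies in $N_m\cap\mathrm{rist}_{\mathfrak{G}}(v)$. This yields the reverse inclusion and hence the displayed equality for every $m\ge k+1$, so $\{N_m\}_{m\ge 0}$ is a direct product filtration; as the product is finite the same decomposition passes to the closures in $\Gamma$. The one genuinely substantive point — and the step I would treat most carefully — is the stability claim of the middle paragraph, namely that the section conditions defining $K_m$ and $T_{m-1}$ survive the truncation to a single branch; this is precisely where the branching (self-similar) structure of $\mathfrak{G}$ over $K$ is used, and everything else is formal manipulation with the $*$-operation and the section identities.
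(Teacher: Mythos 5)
Your proof is correct and rests on the same structural fact as the paper's own argument: $N_m=K_mT_{m-1}$, where $K_m$ and $T_{m-1}$ are defined by section conditions at levels $m$ and $m-1$ respectively, and therefore split as direct products over the subtrees rooted at any level $k<m$. The paper packages this as the one-level subgroup identity $N_m=\psi^{-1}(N_{m-1}\times N_{m-1})$, obtained from $\psi^{-1}(K_{m-1}T_{m-2}\times K_{m-1}T_{m-2})=\psi^{-1}(K_{m-1}\times K_{m-1})\,\psi^{-1}(T_{m-2}\times T_{m-2})=K_mT_{m-1}$, and iterates it implicitly, while you carry out the same computation elementwise at level $k$ via the truncation maps $g\mapsto g|_v*v$, which has the minor merit of making explicit the identification of the direct factors with $N_m\cap\mathrm{rist}_{\mathfrak{G}}(v)$ that the paper leaves to the reader.
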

\begin{proof}
    It suffices to prove that $N_m=\psi^{-1}(N_{m-1}\times N_{m-1})$ for $m\geq 2$. We have 
\begin{align*}
    \psi^{-1}(N_{m-1}\times N_{m-1})&=\psi^{-1}(K_{m-1}T_{m-2}\times K_{m-1} T_{m-2})\\
    &=\psi^{-1}(K_{m-1}\times K_{m-1})\psi^{-1}(T_{m-2}\times T_{m-2})=K_mT_{m-1}=N_m
\end{align*}
\end{proof}

\begin{proposition}
\label{proposition: grigorchuk rist dimension}
        Let $\Gamma\le \mathrm{Aut}~T_2$ be the closure of the first Grigorchuk group $\mathfrak{G}$ and let $V$ be a finite set of non-comparable vertices of the 2-adic tree. Then the product $\prod_{v\in V}\mathrm{rist}_\Gamma(v)$ in $\Gamma$ has strong Hausdorff dimension given by
    $$\mathrm{hdim}^{\mathcal{L}}_{\Gamma}(\prod_{v\in V}\mathrm{rist}_\Gamma(v))=\mu(V).$$
\end{proposition}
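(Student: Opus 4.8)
The plan is to reduce the computation of $\mathrm{hdim}^{\mathcal{L}}_\Gamma$ to a layer-by-layer analysis inside the quotients $N_m/N_{m+1}\cong V_m\oplus V_{m-1}$, exploiting that $\{N_m\}$ is a direct product filtration (\cref{lemma:Nm_product_filtration}) while the terms of $\mathcal{L}$ interpolate between consecutive $N_m$'s (\cref{proposition: lower central and dimension series of grigorchuk group}). All indices involved are computed in finite congruence quotients, where $\Gamma$ and $\mathfrak{G}$ coincide; since $\gamma_n(\Gamma)=\overline{\gamma_n(\mathfrak{G})}$ by \cref{lemma: verbal subgroups}, I work throughout with $\mathfrak{G}$, and write $H:=\prod_{v\in V}\mathrm{rist}_\Gamma(v)$, $k_v:=|v|_w$, so that $\mu(V)=\sum_{v\in V}2^{-k_v}$.

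First I record the coarse behaviour. Writing $n=2^m+1+r$ with $0\le r\le 2^m$, \cref{proposition: lower central and dimension series of grigorchuk group} gives $N_{m+1}\le\gamma_n(\mathfrak{G})\le N_m$, hence
$$\log_2|H:H\cap\gamma_n|=\log_2|H:H\cap N_m|+\bigl(\dim_{\mathbb{F}_2}\bar W-\dim_{\mathbb{F}_2}(\bar W\cap\bar\gamma_r)\bigr),$$
where $\bar W:=(H\cap N_m)N_{m+1}/N_{m+1}$ and $\bar\gamma_r:=\gamma_n/N_{m+1}=V_m^r\oplus V_{m-1}^r$ under $\alpha\oplus\beta$. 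Since $\{N_m\}$ is a direct product filtration and the $v\in V$ are non-comparable, a support argument gives $H\cap N_m=\prod_{v\in V}(N_m\cap\mathrm{rist}_\Gamma(v))$, and \cref{corollary: rist in terms of Vn} identifies $\bar W=\bigoplus_{v\in V}U_v$ with $U_v=U_v^X\oplus U_v^Y$, where $U_v^X$ (resp. $U_v^Y$) is the span of those monomials $X_1^{s_1}\cdots X_m^{s_m}$ (resp. in the $Y$'s) whose length-$k_v$ prefix equals $v$. In particular $\dim\bar W=\sum_{v}3\cdot 2^{m-1-k_v}=\mu(V)\dim(V_m\oplus V_{m-1})$, and summing over layers yields $\log_2|H:H\cap N_m|=\mu(V)\log_2|\Gamma:N_m|+O(1)$ with error constant independent of $m$.

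The heart of the proof is the layer term $\rho(r):=\dim\bar W-\dim(\bar W\cap\bar\gamma_r)=\rho^X(r)+\rho^Y(r)$, and the main obstacle is that $\bar W$ is described in the monomial basis $\{X^s\}$ whereas $\bar\gamma_r$ is described in the basis $\{(1-X_1)^{i_1}\cdots(1-X_m)^{i_m}\}$ ordered by $i=(i_m\cdots i_1)_2$. Passing to the latter basis via $X_j=1+(1-X_j)$, one computes for a single vertex $v$ that the image of $U_v^X$ in $V_m/V_m^r$ has dimension exactly $\lceil r/2^{k_v}\rceil$ (and analogously for $Y$ with $V_{m-1}^r$). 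These per-vertex ranks are not additive — the images overlap — so the bound $\rho^X(r)\le\sum_v\lceil r/2^{k_v}\rceil$ overcounts. To control $\rho^X(r)$ I would peel off the first variable $X_1$: splitting $V=V_0\sqcup V_1$ according to the first letter (a level-$1$ vertex of $V$ becoming the root of its subtree) and using the decomposition $V_m^r=V_{m-1}^{\lceil r/2\rceil}\oplus(1-X_1)V_{m-1}^{\lfloor r/2\rfloor}$, one obtains a recursion $\rho^X_V(r)=\rho^X_{V_0}(\lceil r/2\rceil)+\rho^X_{V_1}(\lfloor r/2\rfloor)+\epsilon$ with $\epsilon\in\{0,1\}$. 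An induction on $\max_v k_v$, using $\mu(V)=\tfrac12(\mu(V_0)+\mu(V_1))$, then gives the uniform estimate $|\rho^X(r)-\mu(V)r|\le c(V)$ and, likewise, $|\rho^Y(r)-\mu(V)\min(r,2^{m-1})|\le c(V)$, for a constant $c(V)$ depending only on $V$.

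Combining the three facts above yields
$$\log_2|H:H\cap\gamma_n|=\mu(V)\bigl(\log_2|\Gamma:N_m|+r+\min(r,2^{m-1})\bigr)+O(1)=\mu(V)\log_2|\Gamma:\gamma_n|+O(1),$$
with the error bounded uniformly in $r$ and $m$. Dividing by $\log_2|\Gamma:\gamma_n|$, which tends to infinity with $n$, shows that the defining limit exists and equals $\mu(V)$; hence the Hausdorff dimension is strong and equals $\mu(V)$, and the verbatim argument with \cref{proposition: lower central and dimension series of grigorchuk group}(iii) settles $\mathcal{D}$. The step I expect to require the most care is precisely the uniform control of the overlap of the images $U_v$ in $V_m/V_m^r$, i.e. establishing the recursion and the resulting constant $c(V)$; everything else is bookkeeping in finite-dimensional $\mathbb{F}_2$-spaces.
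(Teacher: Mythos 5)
Your proposal is correct, and it takes a genuinely different route through the key difficulty. The paper reduces at once to a single vertex: it observes (via \cref{corollary: rist in terms of Vn} and \cref{lemma:Nm_product_filtration}) that the image of $\prod_{v\in V}\mathrm{rist}_{N_m}(v)$ in $N_m/N_{m+1}$ splits as a direct product, concludes that it suffices to treat $V=\{v\}$, computes for one vertex the layer function $f(r)=\log|\mathrm{rist}_{N_m}(v):\mathrm{rist}_{\gamma_{2^m+1+r}(\mathfrak{G})}(v)|$, obtains convergence along the subsequence $\{N_m\}$ from \cref{lemma: dimension of product}, \cref{theorem: full spectra} and \cref{corollary: full and normal spectra}, and finally interpolates by sandwiching $(x_m+f(r))/(y_m+g(r))$. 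You never reduce to one vertex: you keep $H=\prod_{v\in V}\mathrm{rist}_\Gamma(v)$ and prove the uniform estimate $\log|H:H\cap\gamma_n|=\mu(V)\log|\Gamma:\gamma_n|+O(1)$ by controlling the image dimension $\rho(r)$ of the whole $\bar W$ through the splitting $V_m^r=V_{m-1}^{\lceil r/2\rceil}\oplus(1-X_1)V_{m-1}^{\lfloor r/2\rfloor}$. Your two key claims check out: the per-vertex image dimension is exactly $\lceil r/2^{k_v}\rceil$ (multiply by the unit $X_1^{v_1}\cdots X_{k_v}^{v_{k_v}}$, which preserves every $V_m^r$ because multiplication by each $X_j$ fixes or increases the index of every $v_m^\ell$, and which carries $U_v^X$ onto the span of the $v_m^\ell$ with $\ell\equiv 0\bmod 2^{k_v}$), and the recursion holds with defect $\epsilon\in\{0,1\}$ because $V_{m-1}^{\lfloor r/2\rfloor}/V_{m-1}^{\lceil r/2\rceil}$ is at most one-dimensional, so the induction on $\max_v k_v$ closes with a constant such as $c(V)=c(V_0)+c(V_1)+2$. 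What your route buys is precisely the point the paper passes over most quickly: the overlap you flag is real (for instance the images of $\langle 1\rangle$ and $\langle X_1\rangle$ in $V_1/V_1^1$ coincide, so intersections with the intermediate terms $\gamma_n$ are not additive over $V$ even though the full layers $N_m/N_{m+1}$ are), hence the reduction to $V=\{v\}$ implicitly requires exactly the uniform defect bound your recursion supplies, and in return your argument gives strong dimension with an explicit $O(1)$ error. What the paper's route buys is brevity: it reuses the nice-filtration machinery of Section 3 for the limit along $\{N_m\}$ and only needs the one-vertex count, and the floor/ceiling discrepancies between the two computations (yours a ceiling, the paper's a floor) are harmless, being absorbed into bounded errors that vanish in the limit.
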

\begin{proof}
    First note that $\mathrm{rist}_\Gamma(v)=\overline{\mathrm{rist}_\mathfrak{G}(v)}$ for any vertex $v$, so we shall work with the discrete group $\mathfrak{G}$. By \cref{corollary: rist in terms of Vn}, we have the following equality:  $$\frac{\left(\prod_{v\in V} \mathrm{rist}_{N_m}(v)\right)N_{m+1}}{N_{m+1}}=\prod_{v\in V}\frac{\mathrm{rist}_{N_m}(v)N_{m+1}}{N_{m+1}},$$
    which can also be deduced from the fact that $\{N_m\}_{m\ge 0}$ is a nice filtration; see \cref{lemma:Nm_product_filtration}. Therefore, it suffices to prove the result for the $V=\{v\}$ case.

  Suppose that $d(v)=k$. By \cref{corollary: rist in terms of Vn}, we have 
    $$V_m^r\cap \alpha(\mathrm{rist}_{N_m}(v)N_{m+1}/N_{m+1})=\langle v_m^\ell\mid \ell\ge r\text{ and }\ell\equiv (v)_2~\mathrm{mod}~2^k\rangle$$
and similarly,
    $$V_{m-1}^r\cap \beta(\mathrm{rist}_{N_m}(v)N_{m+1}/N_{m+1})=\langle v_{m-1}^\ell\mid \ell\ge r\text{ and }\ell\equiv (v)_2~\mathrm{mod}~2^k\rangle.$$
Therefore, by \cref{proposition: lower central and dimension series of grigorchuk group}, we obtain that
    \begin{align*}
        \log|\mathrm{rist}_{N_m}(v):\mathrm{rist}_{\gamma_{2^m+1+r}(\mathfrak{G})}(v)|= f(r),
    \end{align*}
    where $f(r)$ is the function
    \begin{align*}
        f(r):=\begin{cases}
            2\lfloor r\cdot2^{-k}\rfloor,&\text{ if }r\le 2^{m-1}-1\\
            2^{m-1-k}+\lfloor r\cdot2^{-k}\rfloor, &\text{ if } 2^{m-1} \le r\le 2^{m}-1.
        \end{cases}
    \end{align*}
By \cref{proposition: lower central and dimension series of grigorchuk group}, note also that 
    \begin{align*}
        \log|N_m:\gamma_{2^m+1+r}(\mathfrak{G})|=g(r):=\begin{cases}
            2r,&\text{ if }r\le 2^{m-1}-1,\\
            2^{m-1}+r,&\text{ if } 2^{m-1} \le r\le 2^{m}-1.
        \end{cases}
    \end{align*}
Since $\{N_m\}_{m\ge 0}$ is a product filtration, it is a nice filtration and thus, thanks to  \cref{lemma: dimension of product}, \cref{theorem: full spectra} and \cref{corollary: full and normal spectra}, we have
   $$\lim_{m\to \infty}\frac{\log|\mathrm{rist}_{\mathfrak{G}}(v)N_m:N_m|}{\log|\mathfrak{G}:N_m|}=:\lim_{m\to \infty} \frac{x_m}{y_m}=\mu(\{v\}).$$

   Now, for $1\le r\le 2^{m-1}-1$, we have
   $$0\le \frac{f(r)}{g(r)}=\frac{\lfloor r\cdot2^{-k}\rfloor}{r}\le 2^{-k}\le 2^{1-k},$$
   and for $2^{m-1}\le r\le 2^m-1$, we have
   $$0\le \frac{f(r)}{g(r)}=\frac{2^{m-1-k}+\lfloor r\cdot2^{-k}\rfloor}{2^{m-1}+r}\le 2\cdot 2^{-k}\le 2^{1-k}.$$
   Therefore, for any $1\le r\le 2^{m}-1$ we obtain
   $$\frac{x_m\cdot g(r)^{-1}}{y_m\cdot g(r)^{-1}+1}\le \frac{x_m\cdot g(r)^{-1}+f(r)\cdot g(r)^{-1}}{y_m\cdot g(r)^{-1}+1}=\frac{x_m+f(r)}{y_m+g(r)}\le \frac{x_m\cdot g^{-1}(r)+2^{1-k}}{y_m\cdot g(r)^{-1}}$$
 and thus,
\begin{align*}
    \mathrm{hdim}_{\Gamma}^\mathcal{L}(\mathrm{rist}_\mathfrak{G}(v))=\lim_{n\to \infty}\frac{\log|\mathrm{rist}_\mathfrak{G}(v)\gamma_n(\mathfrak{G}):\gamma_n(\mathfrak{G})|}{\log|\mathfrak{G}:\gamma_n(\mathfrak{G}):\gamma_n(\mathfrak{G})|}&=\lim_{m\to \infty}\frac{x_m}{y_m}=\mu(\{v\}).\qedhere
\end{align*}
\end{proof}

\end{document}